\numberwithin{equation}{section}
\newtheorem{thm}{Theorem}[section]
\newtheorem{cor}[thm]{Corollary}
\newtheorem{prop}[thm]{Proposition}
\newtheorem{lem}[thm]{Lemma}
\theoremstyle{definition}
\newtheorem{defn}[thm]{Definition}
\theoremstyle{remark}
\newtheorem{rmk}[thm]{Remark}
\newtheorem{exam}[thm]{Example}
\newcommand{\co}{\colon\thinspace}
\newcommand{\mb}[1]{\mathbb{#1}}
\newcommand{\mf}[1]{\mathfrak{#1}}
\newcommand{\too}{\xrightarrow}
\DeclareMathOperator{\Hom}{Hom}
\DeclareMathOperator{\Map}{Map}
\DeclareMathOperator{\End}{End}
\DeclareMathOperator{\Fun}{Fun}
\DeclareMathOperator*{\colim}{colim}
\DeclareMathOperator*{\hocolim}{hocolim}
\DeclareMathOperator*{\holim}{holim}
\title{An introduction to Bousfield localization}
\author{Tyler Lawson}
\begin{document}
\maketitle

\section{Introduction}

Bousfield localization encodes a wide variety of constructions in
homotopy theory, analogous to localization and completion in
algebra. Our goal in this chapter is to give an overview of Bousfield
localization, sketch how basic results in this area are proved, and
illustrate some applications of these techniques. Near the end we will
give more details about how localizations are constructed using the
small object argument. The underlying methods apply in many contexts,
and we have attempted to provide a variety of examples that exhibit
different behavior.

We will begin by discussing categorical localizations. Given a
collection of maps in a category, the corresponding localization of
that category is formed by making these maps invertible in a universal
way; this technique is often applied to discard irrelevant information
and focus on a particular type of phenomenon. In certain cases,
localization can be carried out internally to the category itself:
this happens when there is a sufficiently ample collection of objects
that already see these maps as isomorphisms. This leads naturally to
the study of reflective localizations.

Bousfield localization generalizes this by taking place in a category
where there are \emph{spaces} of functions, rather than sets, with
uniqueness only being true up to contractible choice. Bousfield
codified these properties, for spaces in
\cite{bousfield-spacelocalization} and for spectra in
\cite{bousfield-spectralocalization}. The definitions are
straightforward, but proving that localizations exist takes work, some
of it of a set-theoretic nature.

Our presentation is close in spirit to Bousfield's work, but the
reader should go to the books of Farjoun \cite{farjoun-localization}
and Hirschhorn \cite{hirschhorn} for more advanced information on this
material. We will focus, for the most part, on \emph{left} Bousfield
localization, since the techniques there are easier and is where most
of our applications lie. In \cite{barwick-bousfieldlocalization} right
Bousfield localization is discussed at more length.

\subsection{Historical background}

The story of localization techniques in algebraic topology probably
begins with Serre classes of abelian groups
\cite{serre-classes}. After choosing a class $\mathcal{C}$ of abelian
groups that is closed under subobjects, quotients, and extensions,
Serre showed that one could effectively ignore groups in $\mathcal{C}$
when studying the homology and homotopy of a simply-connected space
$X$. In particular, he proved mod-$\mathcal{C}$ versions of the
Hurewicz and Whitehead theorems, showed the equivalence between finite
generation of homology and homotopy groups, determined the rational
homotopy groups of spheres, and significantly reduced the technical
overhead in computing the torsion in homotopy groups by allowing one
to work with only one prime at a time. His techniques for computing
rational homotopy groups only require rational homology groups;
$p$-local homotopy groups only require $p$-local homology groups;
$p$-completed homotopy groups only require mod-$p$ homology groups.

These techniques received a significant technical upgrade in the late
1960's and early 1970's, starting with the work of Quillen on rational
homotopy theory \cite{quillen-rational} and work of Sullivan and
Bousfield--Kan on localization and completion of spaces
\cite{sullivan-mitnotes,sullivan-genetics,bousfield-kan}. Rather than
using Serre's algebraic techniques to break up the homotopy groups
$\pi_* X$ and homology groups $H_* X$ into localizations and
completions, their insight was that \emph{space-level} versions of
these constructions provided a more robust theory. For example, a
simply-connected space $X$ has an associated space $X_{\mb Q}$ whose
homotopy groups and (positive-degree) homology groups are, themselves,
rational homotopy and homology groups of $X$; similarly for Sullivan's
$p$-localization $X_{(p)}$ and $p$-completion $X^\wedge_p$. Without
this, each topological tool requires a proof that it is compatible
with Serre's mod-$\mathcal{C}$-theory, such as Serre's
mod-$\mathcal{C}$ Hurewicz and Whitehead theorems or mod-$\mathcal{C}$
cup products. Now these are simply consequences of the Hurewicz and
Whitehead theorems applied to $X_{\mb Q}$, and any subsequent
developments will automatically come along. Moreover, Sullivan
pioneered arithmetic fracture techniques that allowed $X$ to be
recovered from its rationalization $X_\mb Q$ and its $p$-adic
completions $X^\wedge_p$ via a homotopy pullback diagram:
\[
  \xymatrix{
    X \ar[r] \ar[d] &
    \prod_p X^\wedge_p \ar[d] \\
    X_\mb Q \ar[r]_-\alpha &
    (\prod_p X^\wedge_p)_{\mb Q}
  }
\]
This allows us to reinterpret homotopy theory. We are no longer using
rationalization and completion just to understand algebraic invariants
of $X$: instead, knowledge of $X$ is equivalent to knowledge of its
localizations, completions, and an ``arithmetic attaching map''
$\alpha$. This entirely changed both the way theorems are proved and
the way that we think about the subject. Later, work of Morava,
Ravenel, and others made extensive use of localization techniques
\cite{morava-noetherian, ravenel-localization}, which today gives an
explicit decomposition of the stable homotopy category into layers
determined by Quillen's relation to the structure theory of formal
group laws \cite{quillen-fgl}.

Many of the initial definitions of localization and completion were
constructive. One can build $X_{\mb Q}$ from $X$ by showing that one
can replace the basic cells $S^n$ in a CW-decomposition with
rationalized spheres $S^n_{\mb Q}$, or by showing that the
Eilenberg--Mac Lane spaces $K(A,n)$ in a Postnikov decomposition can
be replaced by rationalized versions $K(A \otimes \mb Q, n)$. One can
instead use Bousfield and Kan's more functorial, but also more
technical, construction as the homotopy limit of a cosimplicial
space. Quillen's work gives more, in the form of a model structure
whose weak equivalences are isomorphisms on rational homology
groups. In his work, the map $X \to X_{\mb Q}$ is a fibrant
replacement, and the essential uniqueness of fibrant replacements
means that $X_{\mb Q}$ has a form of universality. It is this
universal property that Bousfield localization makes into a
definition.

\subsection{Notation}

We will use $\mathcal{S}$ to denote an appropriately convenient
category of spaces (one can use simplicial sets, but with appropriate
modifications throughout) with internal function objects. We similarly
write $\mathcal{S}p$ for a category of spectra.

Throughout this paper we will often be working in categories
enriched in spaces: for any $X$ and $Y$ in $\mathcal{C}$ we will write
$\Map_\mathcal{C}(X,Y)$ for the mapping space, or just $\Map(X,Y)$ if
the ambient category is understood. Letting
$[X,Y] = \pi_0 \Map_{\mathcal{C}}(X,Y)$, we obtain an ordinary
category called the \emph{homotopy category} $h\mathcal{C}$. Two
objects in $\mathcal{C}$ are \emph{homotopy equivalent} if and only if
they become isomorphic in $h\mathcal{C}$.

For us, homotopy limits and colimits in the category of spaces are
given by the descriptions of Vogt or Bousfield--Kan
\cite{vogt-hocolim, bousfield-kan}. A homotopy limit or homotopy
colimit in $\mathcal{C}$ is characterized by having a natural weak
equivalence of spaces:
\begin{align*}
  \Map_{\mathcal{C}}(X, \holim_J Y_j)
  &\simeq \holim_J \Map_{\mathcal{C}}(X, Y_j)\\
  \Map_{\mathcal{C}}(\hocolim_I X_i, Y)
  &\simeq \holim_I \Map_{\mathcal{C}}(X_i, Y)
\end{align*}
In particular, since homotopy limit constructions on spaces preserve
objectwise weak equivalences of diagrams, homotopy limits and colimits
also preserve objectwise homotopy equivalences in $\mathcal{C}$.

Some set theory is unavoidable, but we will not spend a great deal of
time with it. For us, a \emph{collection} or \emph{family} may be a proper
class, rather than a set. Categories will be what are sometimes called
\emph{locally small} categories: the collection of objects may be
large, but there is a set of maps between any pair of objects.

\subsection{Acknowlegements}

The author would like to thank Clark Barwick and Thomas Nikolaus for
discussions related to localizations of spaces.

Th author was partially supported by NSF grant 1610408 and a grant
from the Simons Foundation.  The author would like to thank the Isaac
Newton Institute for Mathematical Sciences for support and hospitality
during the programme HHH when work on this paper was undertaken. This
work was supported by: EPSRC grant numbers EP/K032208/1 and
EP/R014604/1.

\section{Motivation from categorical localization}

In general, we recall that for an ordinary category $\mathcal{A}$ and
a class $\mathcal{W}$ of the maps called \emph{weak equivalences} (or
simply \emph{equivalences}), we can attempt to construct a categorical
localization $\mathcal{A} \to \mathcal{A}[\mathcal{W}^{-1}]$. This
localization is universal among functors $\mathcal{A} \to \mathcal{D}$
that send the maps in $\mathcal{W}$ to isomorphisms. The category
$\mathcal{A}[\mathcal{W}^{-1}]$ is unique up to isomorphism if it
exists.\footnote{For the record, this category also satisfies a
  2-categorical universal property: for any $\mathcal{D}$, the map of
  functor categories
  \[
    \Fun(\mathcal{A}[\mathcal{W}^{-1}], \mathcal{D}) \to
    \Fun(\mathcal{A},\mathcal{D})
  \]
  is fully faithful, and the image consists of those
  functors sending $\mathcal{W}$ to isomorphisms. If we replace
  ``image'' with ``essential image'' in this description, we recover a
  universal property characterizing $\mathcal{A} \to
  \mathcal{A}[\mathcal{W}^{-1}]$ up to equivalence of categories
  rather than up to isomorphism.}
  
% \begin{exam}
%   There can be set-theoretic obstructions to localization. Let
%   $\mathcal{A}$ be the category with
%   $Ob(\mathcal{A}) = \{x,y\} \coprod S$ for a proper class, and
%   \[
%     \Hom_\mathcal{A}(a,b) =
%     \begin{cases}
%       \{*\} &\text{if }a=b\text{ or }a \in \{x,y\}, b \in S,\\
%       \emptyset &\text{otherwise}.
%     \end{cases}
%   \]
%   Let $\mathcal{W}$ be the set of maps with source $y$. Then the set
%   of maps $x \to y$ in the localization
%   $\mathcal{A}[\mathcal{W}^{-1}]$ should consist of the composites $x
%   \to s \xleftarrow{\sim} y$ for $s \in S$, but this is too big to be
%   a set by assumption and so $\mathcal{A}[\mathcal{W}^{-1}]$ is too
%   big to be a category.\footnote{We use ``category'' to refer to what
%     other authors would call a ``locally small category,'' meaning
%     that there is a set of maps between any two objects.}

%   If $\mathcal{W}$ is itself a set, this problem does not occur. The
%   localization $\mathcal{A}[\mathcal{W}^{-1}]$ can be constructed as
%   having the same objects as $\mathcal{A}$, and with
%   $\Hom_{\mathcal{A}[\mathcal{W}^{-1}}(a,b)$ a quotient of the set of
%   formal composites
%   \[
%     a \to c_1 \xleftarrow{\sim} c_2 \to c_3  \xleftarrow{\sim} \dots
%     \to b
%   \]
%   by an equivalence relation.
% \end{exam}

\begin{exam}
  We will begin by remembering the case of the category $\mathcal{S}$
  of spaces, with $\mathcal{W}$ the class of weak homotopy
  equivalences. The projection $p\co X \times [0,1] \to X$ is always a
  weak equivalence with homotopy inverses $i_t$ given by
  $i_t(x) = (x,t)$. In the localization, we find that homotopic maps
  are equal: for a homotopy $H$ from $f$ to $g$, we have
  $f = H i_0 = H p^{-1} = H i_1 = g$. Therefore, localization factors
  through the homotopy category $h\mathcal{S}$.

  However, within the category of spaces we have a collection with
  special properties: the subcategory $\mathcal{S}^{CW}$ of
  CW-complexes. For any CW-complex $K$, weak equivalences $X \to Y$
  induce bijections $[K,X] \to [K,Y]$---this can be proved, for
  example, inductively on the cells of $K$---and any space $X$ has a
  CW-complex $K$ with a weak homotopy equivalence $K \to X$. These two
  properties show, respectively, that the composite
  \[
    h\mathcal{S}^{CW} \to h\mathcal{S} \to
    \mathcal{S}[\mathcal{W}^{-1}]
  \]
  is fully faithful and essentially surjective. Within the homotopy
  category $h\mathcal{S}$ we have found a large enough library of
  special objects, and localization can be done by forcibly moving
  objects into this subcategory.\footnote{Technically speaking, we
    often use a result like this to actually show that
    $\mathcal{S}[\mathcal{W}^{-1}]$ exists.}
\end{exam}

\begin{exam}
  A similar example to the above occurs in the category
  $\mathcal{K}_R$ of nonnegatively graded cochain complexes of modules
  over a commutative ring $R$, with $\mathcal{W}$ the class of
  quasi-isomorphisms. Within $\mathcal{K}_R$ there is a subcategory
  $\mathcal{K}_R^{Inj}$ of complexes of injective modules. Fundamental
  results of homological algebra show that for a quasi-isomorphism
  $A \to B$ and a complex $Q$ of injectives, there is a bijection
  $[B,Q] \to [A,Q]$ of chain homotopy classes of maps, and that any
  complex $A$ has a quasi-isomorphism $A \to Q$ to a complex of
  injectives. This similarly shows that the composite functor 
  \[
    h\mathcal{K}_R^{Inj} \to h\mathcal{K}_R \to \mathcal{K}_R[\mathcal{W}^{-1}]
  \]
  is an equivalence of categories.
\end{exam}

These examples are at the foundation of Quillen's theory of model
categories, and we will return to examples like them when we discuss
localization of model categories.

\section{Local objects in categories}
\label{sec:local-objects}

In this section we will fix an ordinary category $\mathcal{A}$.

\begin{defn}
  Let $S$ be a class of morphisms in $\mathcal{A}$.  An object
  $Y \in \mathcal{A}$ is \emph{$S$-local} if, for all $f\co A \to B$
  in $S$, the map
  \[
    \Hom_{\mathcal{A}}(B, Y) \too{f^*}
    \Hom_{\mathcal{A}}(A, Y)
  \]
  is a bijection. We write $L^S(\mathcal{A})$ for the full subcategory
  of $S$-local objects.
  
  If $S = \{f\co A \to B\}$ consists of just one map, we simply refer
  to this property as being \emph{$f$-local} and write
  $L^f(\mathcal{A})$ for the category of $f$-local objects.
\end{defn}

\begin{rmk}
  If $S = \{f_\alpha\co A_\alpha \to B_\alpha\}$ is a set and
  $\mathcal{A}$ has coproducts indexed by $S$, then by defining
  $f = \coprod_\alpha f_\alpha\co \coprod A_\alpha \to \coprod
  B_\alpha$ we find that $S$-local objects are equivalent to $f$-local
  objects.
\end{rmk}

A special case of localization is when our maps in $S$ are maps to a
terminal object.
\begin{defn}
  Suppose $S$ is a class of maps $\{W_\alpha \to *\}$, where $*$ is a
  terminal object. In this case, we refer to such a localization as a
  \emph{nullification} of the objects $W_\alpha$.
\end{defn}

\begin{rmk}
  Nullification often takes place when $\mathcal{A}$ is pointed. If
  $S$ is a set, $\mathcal{A}$ is pointed, and $\mathcal{A}$ has
  coproducts, then any coproduct of copies of $*$ is again $*$ and we
  can again replace nullification of a set of objects with
  nullification of an individual object.
\end{rmk}

\begin{defn}
  A map $A \to B$ in $\mathcal{A}$ is an \emph{$S$-equivalence} if,
  for all $S$-local objects $Y$, the map
  \[
    \Hom_{\mathcal{A}}(B,Y) \to \Hom_{\mathcal{A}}(A,Y)
  \]
  is a bijection.
\end{defn}
The class of $S$-equivalences contains $S$ by definition.

\begin{defn}
  A map $X \to Y$ is an \emph{$S$-localization} if it is an
  $S$-equivalence and $Y$ is $S$-local, and under these conditions we
  say that $X$ \emph{has an $S$-localization}. If all objects in
  $\mathcal{A}$ have $S$-localizations, we say that
  \emph{$\mathcal{A}$ has $S$-localizations.}
\end{defn}

\begin{prop}
  \label{prop:uniquelocalization}
  Any two $S$-localizations $f_1\co X \to Y_1$ and $f_2\co X \to Y_2$
  are isomorphic under $X$ in $\mathcal{A}$.
\end{prop}

\begin{proof}
  Because $Y_i$ are $S$-local, $\Hom(B,Y_i) \to \Hom(A,Y_i)$ is always
  an isomorphism for any $S$-equivalence $A \to B$. Applying this to
  the $S$-equivalences $X \to Y_j$, we get isomorphisms
  $\Hom(Y_j, Y_i) \to \Hom(X,Y_i)$ in $\mathcal{A}$: any map
  $X \to Y_i$ has a unique extension to a map $Y_j \to Y_i$. Existence
  allows us to find maps $Y_1 \to Y_2$ and $Y_2 \to Y_1$ under $X$,
  and uniqueness allows us to conclude that these two maps are inverse
  to each other in $\mathcal{A}$.

  More concisely, $Y_1$ and $Y_2$ are both initial objects in the
  comma category of $S$-local objects under $X$ in $\mathcal{A}$, and
  this universal property forces them to be isomorphic.
\end{proof}

As a result, it is reasonable to call such an object \emph{the}
$S$-localization of $X$ and write it as $L^S X$ (or simply $LX$ if $S$
is understood). More generally than this, if $X \to LX$ and
$X' \to LX'$ are $S$-localization maps, any map $X \to X'$ in
$\mathcal{A}$ extends uniquely to a commutative square. This is
encoded by the following result.
\begin{prop}
  \label{prop:chooselocalizations}
  Let $Loc^S(\mathcal{A})$ be the category of \emph{localization
    morphisms}, whose objects are $S$-localization maps
  $X \to LX$ in $\mathcal{A}$ and whose morphisms are commuting
  squares. Then the forgetful functor
  \[
    Loc^S(\mathcal{A}) \to \mathcal{A},
  \]
  sending $(X \to LX)$ to $X$, is fully faithful. The image consists
  of those objects $X$ that have $S$-localizations.
\end{prop}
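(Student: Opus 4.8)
The plan is to deduce everything from a single universal mapping property: if $\ell_X\co X \to LX$ is an $S$-localization, then for every $S$-local object $Z$ the precomposition map $\ell_X^*\co \Hom_{\mathcal{A}}(LX, Z) \to \Hom_{\mathcal{A}}(X, Z)$ is a bijection. This is immediate, since $\ell_X$ is an $S$-equivalence and $Z$ is $S$-local; it is exactly the observation driving the proof of Proposition \ref{prop:uniquelocalization}. Everything else is bookkeeping around this bijection.

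First I would dispose of the claim about the image on objects. An object $X \in \mathcal{A}$ lies in the image of the forgetful functor precisely when there is some object $(X \to LX)$ of $Loc^S(\mathcal{A})$ lying over it, that is, precisely when $X$ admits an $S$-localization map. By definition this says exactly that $X$ has an $S$-localization, so the image is the subclass of such objects, as claimed.

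For full faithfulness, I would fix two objects $\ell_X\co X \to LX$ and $\ell_{X'}\co X' \to LX'$ of $Loc^S(\mathcal{A})$. A morphism between them is a commuting square, i.e.\ a pair $(g,h)$ with $g\co X \to X'$, $h\co LX \to LX'$, and $h\ell_X = \ell_{X'}g$; the forgetful functor records the top map $g$. Since $LX'$ is $S$-local, the key property supplies a bijection $\ell_X^*\co \Hom_{\mathcal{A}}(LX, LX') \xrightarrow{\sim} \Hom_{\mathcal{A}}(X, LX')$. For fullness, given any $g\co X \to X'$, I form $\ell_{X'}g \in \Hom_{\mathcal{A}}(X, LX')$ and use surjectivity of $\ell_X^*$ to produce $h\co LX \to LX'$ with $h\ell_X = \ell_{X'}g$; the pair $(g,h)$ is then a morphism of $Loc^S(\mathcal{A})$ mapping to $g$. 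For faithfulness, if $(g,h)$ and $(g,h')$ both lie over the same $g$, then $h\ell_X = \ell_{X'}g = h'\ell_X$, so $h$ and $h'$ have the same image under $\ell_X^*$, and injectivity forces $h = h'$.

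I expect no real obstacle here: the substantive content was already extracted in Proposition \ref{prop:uniquelocalization}, and the present statement merely repackages that uniqueness as the full faithfulness of the forgetful functor. The only point requiring genuine care is confirming that a ``commuting square'' is exactly the datum of a map $g$ together with the automatically unique filler $h$, so that the induced map on morphism sets is a genuine bijection rather than merely injective or surjective.
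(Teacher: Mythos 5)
Your proof is correct and matches the paper's intended argument exactly: the paper states this proposition without proof, but the preceding paragraph (``any map $X \to X'$ in $\mathcal{A}$ extends uniquely to a commutative square'') is precisely your observation that $\ell_X^*\co \Hom_{\mathcal{A}}(LX, LX') \to \Hom_{\mathcal{A}}(X, LX')$ is a bijection because $\ell_X$ is an $S$-equivalence and $LX'$ is $S$-local. Your careful check that a morphism of $Loc^S(\mathcal{A})$ over $g$ is exactly the datum of the unique filler $h$ is the right bookkeeping, and the identification of the image is immediate as you say.
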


\begin{prop}
  The collection of $S$-local objects is closed under limits,
  and the collection of $S$-equivalences is closed under 
  colimits.
\end{prop}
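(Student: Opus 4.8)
The plan is to reduce both halves of the statement to the defining universal properties of limits and colimits, together with the single elementary lemma that a limit of bijections of sets is a bijection. That lemma holds because $\lim_J$ is a functor: a natural transformation of diagrams $J \to \mathbf{Set}$ whose components are all bijections is an isomorphism in the functor category (the pointwise inverses assemble into a natural transformation), and applying $\lim_J$ sends it to a bijection. I will use this lemma twice, once directly and once after a contravariant flip.

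For the first statement, let $\{Y_j\}_{j \in J}$ be a diagram of $S$-local objects whose limit $\lim_J Y_j$ exists in $\mathcal{A}$, and let $f \co A \to B$ be a map in $S$. The universal property of the limit supplies bijections $\Hom(A, \lim_J Y_j) \cong \lim_J \Hom(A, Y_j)$ and $\Hom(B, \lim_J Y_j) \cong \lim_J \Hom(B, Y_j)$ that are natural in the source. First I would check that, under these identifications, the map $f^*$ on the left corresponds to $\lim_J(f^*)$, the limit of the levelwise maps $f^* \co \Hom(B, Y_j) \to \Hom(A, Y_j)$. Since each $Y_j$ is $S$-local, each of these levelwise maps is a bijection, so by the lemma $\lim_J(f^*)$, and hence $f^*$, is a bijection. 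As $f$ was arbitrary, $\lim_J Y_j$ is $S$-local.

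The second statement is the formal dual, read in the arrow category: a colimit of morphisms is formed by taking colimits of sources and targets componentwise, so I would start from a natural transformation $\phi$ of diagrams $A, B \co I \to \mathcal{A}$ whose components $\phi_i \co A_i \to B_i$ are all $S$-equivalences, and show that the induced $\colim_I \phi \co \colim_I A_i \to \colim_I B_i$ is an $S$-equivalence (whenever the colimits exist). Fixing an $S$-local object $Y$, the universal property of the colimit converts the relevant $\Hom$'s into limits, $\Hom(\colim_I A_i, Y) \cong \lim_I \Hom(A_i, Y)$ and likewise for $B$, under which $(\colim_I \phi)^*$ becomes $\lim_I(\phi_i^*)$. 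Because each $\phi_i$ is an $S$-equivalence and $Y$ is $S$-local, each $\phi_i^*$ is a bijection, so the lemma again gives a bijection. Since $Y$ was an arbitrary $S$-local object, $\colim_I \phi$ is an $S$-equivalence.

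Both arguments are essentially bookkeeping, so I do not expect a serious obstacle; the only points deserving care are the contravariance in the second part, where colimits in $\mathcal{A}$ must be turned into limits of $\Hom$-sets so that both halves rest on the same lemma, and being explicit that ``closed under colimits'' for a class of morphisms is meant in the arrow category with source and target colimits taken componentwise.
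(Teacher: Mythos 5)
Your proof is correct and follows essentially the same route as the paper: both halves reduce to the contravariant $\Hom$-set universal properties of limits and colimits and the fact that a limit of bijections of sets is a bijection. Your explicit isolation of that lemma, and of the componentwise reading of ``closed under colimits'' in the arrow category, merely spells out steps the paper leaves implicit.
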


\begin{proof}
  If $f\co A \to B$ is in $S$ and $\{Y_j\}$ is a diagram of $S$-local
  objects, then
  \[
    \Hom(B,Y_j) \to \Hom(A,Y_j)
  \]
  is a diagram of isomorphisms, and taking limits
  we find that we have an isomorphism
  \[
    \Hom(B,\lim_J Y_j) \to \Hom(A,\lim_J Y_j).
  \]
  Since $A \to B$ was an arbitrary map in $S$, this shows that
  $\lim_J Y_j$ is $S$-local.
  
  Similarly, if $\{A_i \to B_i\}$ is a diagram of $S$-equivalences and
  $Y$ is $S$-local, then
  \[
    \Hom(B_i, Y) \to \Hom(A_i,Y)
  \]
  is a diagram of isomorphisms, and taking limits we find that
  \[
    \Hom(\colim_I B_i, Y) \to \Hom(\colim_I A_i, Y)
  \]
  is also an isomorphism. Since $Y$ was an arbitrary local object, this
  shows that the map $\colim_I A_i \to \colim_I B_i$ is an
  $S$-equivalence.
\end{proof}

\begin{exam}
  Consider the map $f\co \mb N \to \mb Z$ in the category of
  monoids. A monoid $M$ is $f$-local if and only if any monoid
  homomorphism $\mb N \to M$ automatically extends to a homomorphism
  $\mb Z \to M$, which is the same as asking that every element in $M$
  has an inverse. Therefore, $f$-local monoids are precisely
  \emph{groups}. The natural transformation $M \to M^{gp}$, from a
  monoid to its group completion, is an $f$-localization.
\end{exam}

\begin{exam}
  Consider the map $f\co F_2 \to \mb Z^2$, from a free group on two
  generators $x$ and $y$ to its abelianization. A group $G$ is $f$-local
  if and only if \emph{every} homomorphism $F_2 \to G$, equivalent to
  choosing a pair of elements $x$ and $y$ of $G$, can be factored
  through $\mb Z^2$, which happens exactly when the commutator $[x,y]$
  is sent to the trivial element. Therefore, $f$-local groups are
  precisely \emph{abelian} groups. The natural transformation $G \to
  G_{ab}$, from a group to its abelianization, is an
  $f$-localization.
\end{exam}

These two localizations are left adjoints to the inclusion of a
subcategory, and this phenomenon is completely general.
\begin{prop}
  Let $S$ be a class of morphisms in $\mathcal{A}$, and suppose that
  $\mathcal{A}$ has $S$-localizations. Then the inclusion
  $L^S \mathcal{A} \to \mathcal{A}$ is part of an adjoint pair
  \[
    \mathcal{A} \stackrel{L}{\rightleftarrows} L^S \mathcal{A}.
  \]
  As a result, $L$ is a reflective localization onto the subcategory
  $L^S \mathcal{A}$.
\end{prop}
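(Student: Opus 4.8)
The plan is to construct the left adjoint $L$ explicitly and then read off the adjunction directly from the definition of $S$-equivalence. Since $\mathcal{A}$ has $S$-localizations, for each object $X$ I may choose an $S$-localization map $\eta_X\co X \to LX$; this defines $L$ on objects and simultaneously furnishes a candidate unit $\eta$. To define $L$ on a morphism $g\co X \to X'$, I would consider the composite $\eta_{X'} \circ g\co X \to LX'$. Because $LX'$ is $S$-local and $\eta_X$ is an $S$-equivalence, the map $\Hom_{\mathcal{A}}(LX, LX') \to \Hom_{\mathcal{A}}(X, LX')$ induced by $\eta_X$ is a bijection; hence there is a unique morphism $Lg\co LX \to LX'$ satisfying $Lg \circ \eta_X = \eta_{X'} \circ g$, which is precisely the naturality square for $\eta$.

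Functoriality of $L$ then follows purely from this uniqueness. The identity $\mathrm{id}_{LX}$ solves the defining equation for $L(\mathrm{id}_X)$, so $L(\mathrm{id}_X) = \mathrm{id}_{LX}$; and for composable $g, h$, both $L(h) \circ L(g)$ and $L(h \circ g)$ solve the defining equation for the composite, so they agree. Thus $L\co \mathcal{A} \to L^S\mathcal{A}$ is a functor and $\eta\co \mathrm{id}_{\mathcal{A}} \Rightarrow i \circ L$ is a natural transformation, where $i$ denotes the inclusion.

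To exhibit the adjunction $L \dashv i$, I would verify that each $\eta_X$ is a universal arrow from $X$ to $i$. For any $S$-local object $Y$, precomposition with $\eta_X$ gives
\[
  \Hom_{L^S\mathcal{A}}(LX, Y) = \Hom_{\mathcal{A}}(LX, Y) \to \Hom_{\mathcal{A}}(X, Y),
\]
where the first equality holds because $L^S\mathcal{A}$ is a full subcategory, and the arrow is a bijection precisely because $\eta_X$ is an $S$-equivalence and $Y$ is $S$-local. By the standard characterization of adjoints via universal arrows, this family of bijections, natural in $X$ and $Y$, is the desired adjunction isomorphism with unit $\eta$. Since $L^S\mathcal{A}$ is by definition a full subcategory of $\mathcal{A}$ and its inclusion admits a left adjoint, $L$ is a reflective localization onto $L^S\mathcal{A}$.

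I expect no deep obstacle: once localizations are chosen on objects, every remaining datum---the action on morphisms, functoriality, and the naturality of the adjunction bijection---is forced by the uniqueness of extensions along $S$-equivalences into $S$-local objects, which is the content of Proposition~\ref{prop:uniquelocalization} together with the discussion preceding Proposition~\ref{prop:chooselocalizations}. The only genuinely nontrivial point is the initial choice of an $S$-localization for each object, which ranges over a possibly proper class. One can instead bypass the explicit construction entirely by observing that the forgetful functor $Loc^S(\mathcal{A}) \to \mathcal{A}$ is fully faithful by Proposition~\ref{prop:chooselocalizations} and essentially surjective by hypothesis, hence an equivalence, and defining $L$ as the composite of a quasi-inverse with the functor $(X \to LX) \mapsto LX$.
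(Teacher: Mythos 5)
Your proof is correct. Where the paper dispatches the whole statement in three lines---the forgetful functor $Loc^S(\mathcal{A}) \to \mathcal{A}$ is fully faithful (Proposition~\ref{prop:chooselocalizations}) and surjective on objects by hypothesis, hence an equivalence, and $L$ is defined as a chosen inverse composed with $(X \to LX) \mapsto LX$---you instead build $L$ by hand: choose $\eta_X\co X \to LX$ on objects, define $Lg$ as the unique solution of $Lg \circ \eta_X = \eta_{X'} \circ g$ (using that $\eta_X^*\co \Hom(LX,LX') \to \Hom(X,LX')$ is a bijection), derive functoriality from uniqueness, and then exhibit each $\eta_X$ as a universal arrow to the inclusion. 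The two arguments have the same mathematical core---the uniqueness of extensions along $S$-equivalences into $S$-local objects is exactly what makes the paper's forgetful functor fully faithful---but they package it differently. The paper's route is shorter and hides the functoriality bookkeeping inside the choice of an inverse equivalence; yours makes the forced nature of every datum explicit and isolates precisely where choice enters, namely the class-indexed selection of localizations $\eta_X$, which is the same set-theoretic caveat the paper relegates to its footnote (choosing an inverse to an equivalence between large categories may require choice for proper classes). Your closing remark correctly identifies the paper's argument as the alternative bypass, so you have in effect given both proofs; the explicit one is arguably the more informative, since the universal-arrow verification is what one must check anyway to know the composite in the paper's proof is a left adjoint.
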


\begin{proof}
  In this situation, the functor $Loc^S(\mathcal{A}) \to \mathcal{A}$
  is fully faithful and surjective on objects. Therefore, it is an
  equivalence of categories and we can choose an inverse,\footnote{If
    the category $\mathcal{A}$ is large then we need to be a little
    bit more honest here, and worry about whether a fully faithful and
    essentially surjective map between large categories has an inverse
    equivalence. This depends on our model for set theory: it is
    asking for us to make a distinguished choice of objects for our
    inverse functor, which may require an axiom of choice for proper
    classes. It is an awkward situation, because choosing these
    inverses isn't categorically interesting unless we can't do it.}
  functorially sending $X$ to a pair $(X \to LX)$ in
  $Loc^S(\mathcal{A})$. The composite functor sending $X$ to $LX$ is
  the desired left adjoint.
\end{proof}

\begin{rmk}
  Embedding the category $\mathcal{A}$ as a full subcategory of a
  larger category can change localization drastically. Consider a set
  $S$ of maps in $\mathcal{A} \subset \mathcal{B}$. Then the $S$-local
  objects of $\mathcal{A}$ are simply the $S$-local objects of
  $\mathcal{B}$ that happen to be in $\mathcal{A}$, but because there
  may be more local objects in $\mathcal{B}$ there may be fewer
  $S$-equivalences in $\mathcal{B}$ than in
  $\mathcal{A}$. Localization in $\mathcal{B}$ may not preserve
  objects of $\mathcal{A}$; a localization map in $\mathcal{A}$ might
  not be an equivalence in $\mathcal{B}$; there might, in general, be
  no comparison map between the two localizations.

  For example, consider the set $S$ of multiplication-by-$p$ maps
  $\mb Z \to \mb Z$ (as $p$ ranges over primes) in the category of
  finitely generated abelian groups, considered as a full subcategory
  of all abelian groups. An abelian group is $S$-local if and only if
  it is a rational vector space, and the only finitely generated group
  of this form is trivial. A map $A \to B$ of finitely generated
  abelian groups is an $S$-equivalence in the larger category of all
  abelian groups if and only if it induces an isomorphism
  $A \otimes \mb Q \to B \otimes \mb Q$, whereas it is \emph{always}
  an equivalence within the smaller category of finitely generated
  abelian groups. Within all abelian groups, $S$-localization is
  rationalization, whereas within finitely generated abelian groups,
  $S$-localization takes all groups to zero.
\end{rmk}

\section{Localization using mapping spaces}
\label{sec:mappingspaces}

We now consider the case where $\mathcal{C}$ is a category enriched in
spaces. The previous definitions and results apply perfectly well to
the homotopy category $h\mathcal{C}$. The following illustrates that
the homotopy category may be an inappropriate place to carry out such
localizations.

\begin{exam}
  Let us start with the homotopy category of spaces $h\mathcal{S}$,
  and fix an $n \geq 0$. Suppose that we want to invert the inclusion
  $S^n \to D^{n+1}$. We fairly readily find that any space $X$ has a
  map $X \to X'$ such that $[D^{n+1},X'] \to [S^n,X']$ is an
  isomorphism: construct $X'$ by attaching $(n+1)$-dimensional cells
  to $X$ until the $n$'th homotopy group $\pi_n(X',x) = 0$ is trivial
  at any basepoint.

  However, this construction lacks \emph{universality}. If $Y$ is any
  other space whose $n$'th homotopy groups are trivial, then any map
  $X \to Y$ can be extended to a map $X' \to Y$ because the attaching
  maps for the cells of $X'$ are trivial. However, this extension is
  \emph{not unique} up to homotopy: any two extensions
  $D^{n+1} \to X' \to Y$ of a cell $S^n \to X \to Y$ glue together to
  an obstruction class in $[S^{n+1},Y]$. As a result, if we construct
  two spaces $X'$ and $X''$ as attempted localizations of $X$, we can
  find maps $X' \to X''$ and $X'' \to X'$ but cannot establish
  that they are mutually inverse in the homotopy category.

  In short, in order for $Y$ to have \emph{uniqueness} for
  filling maps from $n$-spheres, we have to have \emph{existence} for
  filling maps from $(n+1)$-spheres. Thus, to make this localization
  work canonically we would need to enlarge our class $S$ to contain
  $S^{n+1} \to D^{n+2}$. The same argument then repeats, showing that
  a canonical localization for $S$ requires that $S$ also contain
  $S^m \to D^{m+1}$ for $m \geq n$.
\end{exam}

The example in the previous section leads to the following
principle. In our definitions, we must replace isomorphism on the path
components of mapping spaces with homotopy equivalence.

\begin{defn}
  Let $S$ be a class of morphisms in the category $\mathcal{C}$.  An
  object $Y \in \mathcal{C}$ is \emph{$S$-local} if, for all
  $f\co A \to B$ in $S$, the map
  \[
    \Map_{\mathcal{C}}(B, Y) \too{f^*}
    \Map_{\mathcal{C}}(A, Y)
  \]
  is a weak equivalence.\footnote{Note that the homotopy class of the
    map $\Map_{\mathcal{C}}(B,Y) \to \Map_{\mathcal{C}}(A,Y)$ only
    depends on the image of $f\co A \to B$ in the homotopy category
    $h\mathcal{C}$, and so we may simply view $S$ as a collection of
    representatives for a class of maps $\bar S$ in $h\mathcal{C}$.}
  We write $L^S(\mathcal{C})$ for the full subcategory of $S$-local
  objects.
  
  If $S = \{f\co A \to B\}$ consists of just one map, we simply refer
  to this property as being \emph{$f$-local} and write
  $L^f(\mathcal{C})$ for the category of $f$-local objects.
\end{defn}

\begin{defn}
  A map $A \to B$ in $\mathcal{C}$ is an \emph{$S$-equivalence} if,
  for all $S$-local objects $Y$, the map
  \[
    \Map_{\mathcal{C}}(B,Y) \to \Map_{\mathcal{C}}(A,Y)
  \]
  is a weak equivalence.
\end{defn}

\begin{defn}
  A map $X \to Y$ is an \emph{$S$-localization} if it is an
  $S$-equivalence and $Y$ is $S$-local, and under these conditions we
  say that $X$ \emph{has an $S$-localization}. If all objects in
  $\mathcal{C}$ have $S$-localizations, we say that
  \emph{$\mathcal{C}$ has $S$-localizations.}
\end{defn}

By applying $\pi_0$ to mapping spaces, we find that some of this
passes to the homotopy category.

\begin{prop}
  Let $\bar S$ be the image of $S$ in the homotopy category
  $h\mathcal{C}$. If $Y$ is $S$-local in $\mathcal{C}$, then its image
  in the homotopy category $h\mathcal{C}$ is $\bar S$-local. 
\end{prop}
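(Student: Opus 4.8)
The plan is to reduce the statement to the elementary fact that the functor $\pi_0$ carries weak equivalences of spaces to bijections of sets. By construction, the hom-sets of the homotopy category are $\Hom_{h\mathcal{C}}(X,Y) = [X,Y] = \pi_0\Map_{\mathcal{C}}(X,Y)$, so there is essentially nothing to prove beyond unwinding these identifications and checking that the map they induce on $\pi_0$ is the expected one.

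First I would fix a map $\bar f\co A \to B$ in $\bar S$ and choose a representative $f\co A \to B$ in $S$. Since $Y$ is $S$-local, the precomposition map $f^*\co \Map_{\mathcal{C}}(B,Y) \to \Map_{\mathcal{C}}(A,Y)$ is a weak equivalence of spaces. Applying $\pi_0$, which sends any weak equivalence to a bijection since a weak equivalence induces a bijection on path components, yields a bijection
\[
  \pi_0\Map_{\mathcal{C}}(B,Y) \too{\pi_0(f^*)} \pi_0\Map_{\mathcal{C}}(A,Y).
\]

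Next I would identify this bijection with the map $\bar f^*\co \Hom_{h\mathcal{C}}(B,Y) \to \Hom_{h\mathcal{C}}(A,Y)$ that we must show is a bijection. Under the definitional identifications $\Hom_{h\mathcal{C}}(B,Y) = \pi_0\Map_{\mathcal{C}}(B,Y)$ and $\Hom_{h\mathcal{C}}(A,Y) = \pi_0\Map_{\mathcal{C}}(A,Y)$, the map $\pi_0(f^*)$ is precisely precomposition of homotopy classes with $\bar f$, because composition in $h\mathcal{C}$ is induced by composition of mapping spaces. This is exactly $\bar f^*$, so $\bar f^*$ is a bijection. As $\bar f$ was an arbitrary element of $\bar S$, the object $Y$ is $\bar S$-local in $h\mathcal{C}$.

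The only point requiring any care, and the closest thing to an obstacle, is this final identification: one must confirm that $\pi_0(f^*)$ agrees with the homotopy-category map $\bar f^*$ rather than merely being some abstract bijection between the same two sets. This is immediate from the construction of $h\mathcal{C}$, together with the observation recorded in the footnote to the definition of $S$-local that the homotopy class of $f^*$ depends only on $\bar f$, so that the choice of representative $f$ is irrelevant.
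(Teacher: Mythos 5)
Your proof is correct and matches the paper's approach exactly: the paper justifies this proposition with the single remark that it follows ``by applying $\pi_0$ to mapping spaces,'' which is precisely your argument that a weak equivalence $f^*\co \Map_{\mathcal{C}}(B,Y) \to \Map_{\mathcal{C}}(A,Y)$ induces a bijection on $\pi_0$, identified with $\bar f^*$ via $\Hom_{h\mathcal{C}}(X,Y) = \pi_0 \Map_{\mathcal{C}}(X,Y)$. Your added care in checking that $\pi_0(f^*)$ really is $\bar f^*$, and that the choice of representative $f$ is irrelevant, fills in details the paper leaves implicit.
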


\begin{rmk}
  An $S$-equivalence in $\mathcal{C}$ does not necessarily becomes an
  $\bar S$-equivalence in $h\mathcal{C}$ because there is potentially
  a larger supply of $\bar S$-local objects.
\end{rmk}

\begin{prop}
  Any two $S$-localizations $f_1\co X \to Y_1$ and $f_2\co X \to Y_2$
  become isomorphic under $X$ in the homotopy category
  $h\mathcal{C}$.
\end{prop}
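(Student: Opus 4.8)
The plan is to reduce this to exactly the argument of Proposition~\ref{prop:uniquelocalization}, but carried out one categorical level down, in $h\mathcal{C}$, after extracting the relevant bijections from the mapping spaces by applying $\pi_0$. The crucial observation is the following: for each pair $i,j \in \{1,2\}$, the map $X \to Y_j$ is an $S$-equivalence and $Y_i$ is $S$-local, so by the definition of an $S$-equivalence the induced map
\[
  \Map_{\mathcal{C}}(Y_j, Y_i) \to \Map_{\mathcal{C}}(X, Y_i)
\]
is a weak equivalence of spaces. Passing to $\pi_0$ then yields a bijection $[Y_j, Y_i] \to [X, Y_i]$ in $h\mathcal{C}$.

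First I would record what these bijections say. The case $(i,j) = (2,1)$ tells us that every homotopy class of map $X \to Y_2$ extends uniquely, up to homotopy, over $f_1\co X \to Y_1$; in particular the class of $f_2$ determines a unique $g\co Y_1 \to Y_2$ with $g f_1 \simeq f_2$. Symmetrically, the case $(i,j)=(1,2)$ produces a unique $h\co Y_2 \to Y_1$ with $h f_2 \simeq f_1$. These are the candidate mutually inverse equivalences under $X$.

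Next I would verify that $g$ and $h$ are inverse in $h\mathcal{C}$. Composing, $h g f_1 \simeq h f_2 \simeq f_1$, while also $\mathrm{id}_{Y_1} \circ f_1 = f_1$; since the diagonal bijection $[Y_1,Y_1]\to[X,Y_1]$ (the case $i=j=1$) is injective, the classes $hg$ and $\mathrm{id}_{Y_1}$ coincide in $h\mathcal{C}$. The same reasoning with the roles of $1$ and $2$ exchanged gives $gh \simeq \mathrm{id}_{Y_2}$. Hence $g$ is an isomorphism in $h\mathcal{C}$ compatible with the maps from $X$, which is exactly an isomorphism of $Y_1$ and $Y_2$ under $X$.

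The point to watch---and what prevents us from quoting Proposition~\ref{prop:uniquelocalization} verbatim in $h\mathcal{C}$---is flagged by the preceding remark: an $S$-equivalence in $\mathcal{C}$ need not become an $\bar S$-equivalence in $h\mathcal{C}$, so $f_1$ and $f_2$ are in general not $\bar S$-localizations in the homotopy category and the ordinary proposition does not apply directly. The content of the argument is precisely that we do not need it to: the bijections $[Y_j,Y_i]\to[X,Y_i]$ that we actually use come from the weak equivalences of mapping spaces in $\mathcal{C}$, which encode strictly more than the corresponding statement for $\bar S$ inside $h\mathcal{C}$. Once those bijections are in hand, no further input from the enriched structure is required, and the initial-object bookkeeping of Proposition~\ref{prop:uniquelocalization} transcribes without change.
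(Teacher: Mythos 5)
Your proof is correct and is essentially the paper's own argument: the paper likewise applies $\Map_{\mathcal{C}}(-,Y_i)$ to the $S$-equivalences $X \to Y_j$, uses the resulting uniqueness-up-to-homotopy with $i \neq j$ to build the comparison maps and with $i = j$ to see that the double composites are homotopic to the identities under $X$. Your closing observation about why Proposition~\ref{prop:uniquelocalization} cannot simply be quoted in $h\mathcal{C}$ (an $S$-equivalence need not become an $\bar S$-equivalence there) is accurate and matches the paper's own remark on this point.
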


\begin{proof}
  This proceeds exactly as in the proof of
  Proposition~\ref{prop:uniquelocalization}. Applying
  $\Map_\mathcal{C}(-,Y_i)$ to the $S$-equivalence $X \to Y_j$, we
  find that the maps $X \to Y_i$ extend to maps $Y_j \to Y_i$ which
  are unique up to homotopy. By first taking $i \neq j$ we construct
  maps between the $Y_i$ whose restrictions to $X$ are homotopic to
  the originals, and taking $i = j$ shows that the double composites
  are homotopic under $X$.
\end{proof}

\begin{rmk}
  At this point it would be very useful to show that, if they exist,
  localizations can be made functorial in the spirit of
  Proposition~\ref{prop:chooselocalizations}. There is typically no
  easy way to produce a functorial localization because many choices
  are made up to homotopy equivalence, and this leads to coherence
  issues: for example, if we have a diagram
  \[
    \xymatrix{
      X \ar[r] \ar[d] & X' \ar[d] \\
      LX \ar@{.>}[r] & LX'
    }
  \]
  where the vertical maps are $S$-localization, then we can construct
  at best the dotted map together with a \emph{homotopy} between the
  two double composites. Larger diagrams do get more extensive
  families of homotopies, but these take work to describe. This is a
  \emph{rectification problem} and in general it is not solvable
  without asking for more structure on $\mathcal{C}$. The small object
  argument, which we will discuss in \S\ref{sec:small-object}, can
  often be done carefully enough to give some form of functorial
  construction of the localization.
\end{rmk}

\begin{prop}
  \label{prop:saturation-omnibus}
  The following properties hold for a class $S$ of morphisms in
  $\mathcal{C}$.
  \begin{enumerate}
  \item The collection of $S$-local objects is closed under
    equivalence in the homotopy category.
  \item The collection of $S$-equivalences is closed under equivalence
    in the homotopy category.
  \item The collection of $S$-local objects is closed under homotopy
    limits.
  \item The collection of $S$-equivalences is closed under
    homotopy colimits.
  \item The homotopy pushout of an $S$-equivalence is
    an $S$-equivalence.
  \item The $S$-equivalences satisfy the two-out-of-three axiom: given
    maps $A \too{f} B \too{g} C$, if any two of $f$, $g$, and $gf$ are
    $S$-equivalences then so is the third.
  \end{enumerate}
\end{prop}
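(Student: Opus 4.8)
The plan is to reduce every clause to a statement about the mapping spaces $\Map_{\mathcal{C}}(-,-)$, using only three standard facts about weak equivalences of spaces: the two-out-of-three property; the fact, recorded in the Notation section, that homotopy limits preserve objectwise weak equivalences of diagrams; and the fact that in a homotopy pullback square of spaces a weak equivalence along one pair of parallel edges forces one along the other. With these in hand, clauses (1), (2), and (6) become formal, clauses (3) and (4) follow from the characterizing formulas for $\holim$ and $\hocolim$, and (5) is the only clause requiring a genuinely homotopical input.

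For (6) I would fix an $S$-local $Y$ and apply $\Map_{\mathcal{C}}(-,Y)$ to a composable pair $A \too{f} B \too{g} C$, producing $\Map_{\mathcal{C}}(C,Y) \too{g^*} \Map_{\mathcal{C}}(B,Y) \too{f^*} \Map_{\mathcal{C}}(A,Y)$; two-out-of-three for spaces then transports directly to two-out-of-three for $S$-equivalences, since $Y$ was arbitrary. For (1), given an $S$-local $Y$, a homotopy equivalence $Y \simeq Y'$, and any $f\co A \to B$ in $S$, the equivalence induces a commuting square of mapping spaces whose vertical legs $\Map_{\mathcal{C}}(A,Y) \to \Map_{\mathcal{C}}(A,Y')$ and $\Map_{\mathcal{C}}(B,Y) \to \Map_{\mathcal{C}}(B,Y')$ are weak equivalences, because mapping spaces send homotopy equivalences to weak equivalences; as the top edge is a weak equivalence, two-out-of-three delivers the bottom one, so $Y'$ is $S$-local. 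Clause (2) is the same argument applied to a square built from equivalences $A \simeq A'$ and $B \simeq B'$ and tested against an arbitrary $S$-local $Y$.

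Clauses (3) and (4) use the defining equivalences $\Map_{\mathcal{C}}(X,\holim_J Y_j) \simeq \holim_J \Map_{\mathcal{C}}(X,Y_j)$ and $\Map_{\mathcal{C}}(\hocolim_I X_i, Y) \simeq \holim_I \Map_{\mathcal{C}}(X_i,Y)$. For (3), if each $Y_j$ is $S$-local then for $f\co A \to B$ in $S$ the map $\Map_{\mathcal{C}}(B,\holim_J Y_j) \to \Map_{\mathcal{C}}(A,\holim_J Y_j)$ is identified with $\holim_J$ of the objectwise weak equivalences $\Map_{\mathcal{C}}(B,Y_j) \to \Map_{\mathcal{C}}(A,Y_j)$, hence is a weak equivalence by preservation of objectwise equivalences. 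For (4), if each $A_i \to B_i$ is an $S$-equivalence and $Y$ is $S$-local, then $\Map_{\mathcal{C}}(\hocolim_I B_i, Y) \to \Map_{\mathcal{C}}(\hocolim_I A_i, Y)$ is identified with $\holim_I$ of the objectwise weak equivalences $\Map_{\mathcal{C}}(B_i,Y) \to \Map_{\mathcal{C}}(A_i,Y)$, and the same principle applies.

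The main obstacle is clause (5), since it is the one place where I cannot merely quote the two-out-of-three property. The strategy is to fix an $S$-local $Y$ and apply $\Map_{\mathcal{C}}(-,Y)$ to the homotopy pushout square with the $S$-equivalence $A \to B$ as one edge and $A \to C$, $B \to D$, $C \to D$ completing it. Because $\Map_{\mathcal{C}}(-,Y)$ turns the $\hocolim$ of the defining span into the $\holim$ of the dual cospan, the resulting square of spaces is a homotopy pullback; the edge $\Map_{\mathcal{C}}(B,Y) \to \Map_{\mathcal{C}}(A,Y)$ coming from the $S$-equivalence is a weak equivalence, and stability of weak equivalences under homotopy base change then forces the parallel edge $\Map_{\mathcal{C}}(D,Y) \to \Map_{\mathcal{C}}(C,Y)$ to be a weak equivalence, so $C \to D$ is an $S$-equivalence. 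The delicate points to verify carefully are exactly that $\Map_{\mathcal{C}}(-,Y)$ converts this homotopy pushout into a homotopy pullback and that weak equivalences are stable under homotopy base change; both are standard, but they are the least formal ingredients in the argument.
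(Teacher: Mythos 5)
Your proposal is correct and takes essentially the same approach as the paper: every clause is tested against $\Map_{\mathcal{C}}(-,Y)$, with (1), (2), and (6) reduced to two-out-of-three for weak equivalences of spaces, (3) and (4) to the characterizing formulas for $\holim$ and $\hocolim$, and (5) to the fact that $\Map_{\mathcal{C}}(-,Y)$ converts the homotopy pushout into a homotopy pullback of mapping spaces, where an equivalence on one edge passes to the parallel edge. Your explicit flagging of stability of weak equivalences under homotopy base change in (5) is exactly the ingredient the paper uses implicitly in the step ``the top arrow is an equivalence \ldots\ and hence the bottom arrow is an equivalence.''
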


\begin{proof}
  If $X \to Y$ becomes an isomorphism in the homotopy category, then
  one can choose an inverse map and homotopies between the double
  composites. Composing with these makes
  $\Map_{\mathcal{C}}(-,X) \to \Map_{\mathcal{C}}(-,Y)$ a homotopy
  equivalence of functors on $\mathcal{C}$, and so $X$ is $S$-local if
  and only if $Y$ is.

  Similarly, if two maps $f\co A \to B$ and $f'\co A' \to B'$ become
  isomorphic in the homotopy category, there exist homotopy
  equivalences $A' \to A$ and $B \to B'$ such that the composite
  $A' \to A \to B \to B'$ is homotopic to $f'$, and applying
  $\Map_{\mathcal{C}}(-,Y)$ we obtain the desired result.
  
  If $f\co A \to B$ is in $S$ and $\{Y_j\}$ is a diagram of $S$-local
  objects, then
  \[
    \Map_{\mathcal{C}}(B,Y_j) \to \Map_{\mathcal{C}}(A,Y_j)
  \]
  is a diagram of weak equivalences of spaces, and taking homotopy
  limits we find that we have an equivalence
  \[
    \Map_{\mathcal{C}}(B,\holim_J Y_j) \to \Map_{\mathcal{C}}(A,\holim_J Y_j).
  \]
  Since $A \to B$ was an arbitrary map in $S$, this shows that
  $\holim_J Y_j$ is $S$-local.
  
  Similarly, if $\{A_i \to B_i\}$ is a diagram of $S$-equivalences and
  $Y$ is $S$-local, then
  \[
    \Map_{\mathcal{C}}(B_i, Y) \to \Map_{\mathcal{C}}(A_i,Y)
  \]
  is a diagram of weak equivalences of spaces, and so
  \[
    \Map_{\mathcal{C}}(\hocolim_I B_i, Y) \to \Map_{\mathcal{C}}(\hocolim_I A_i, Y)
  \]
  is also a weak equivalence. Since $Y$ was an arbitrary $S$-local
  object, this shows that the map $\hocolim_I A_i \to \hocolim_I B_i$
  is an $S$-equivalence.

  Suppose that we have a homotopy pushout diagram
  \[
    \xymatrix{
      A \ar[r]^f \ar[d] & B \ar[d] \\
      A' \ar[r]_{f'} & B'
    }
  \]
  where $f\co A \to B$ is an $S$-equivalence. Given any $S$-local
  object $Y$, we get a homotopy pullback diagram
  \[
    \xymatrix{
      \Map_{\mathcal{C}}(A,Y) & \Map_{\mathcal{C}}(B,Y) \ar[l] \\
      \Map_{\mathcal{C}}(A',Y)\ar[u] & \Map_{\mathcal{C}}(B',Y) \ar[l] \ar[u].
    }
  \]
  The top arrow is an equivalence by the assumption that $f$ is an
  $S$-equivalence, and hence the bottom arrow is an equivalence. Since
  $Y$ was an arbitrary $S$-local object, we find that $f'$ is an
  $S$-equivalence.

  The 2-out-of-3 property is obtained by first applying
  $\Map_{\mathcal{C}}(-,Y)$ to the diagram $A \to B \to C$ and then
  using the 2-out-of-3 axiom for weak equivalences.
\end{proof}

If we expand a class $S$ to a larger class $T$ of equivalences, our work
so far gives us an automatic relation between $S$-localization and
$T$-localization.
\begin{prop}
  Suppose that $S$ and $T$ are classes of morphisms such that every
  map in $S$ is a $T$-equivalence. Then the following properties
  hold.
  \begin{enumerate}
  \item Every $T$-local object is also $S$-local.
  \item Every $S$-equivalence is also a $T$-equivalence.
  \item Suppose $X \to L_S X$ is an $S$-localization and $X \to L_T X$
    is a $T$-localization. Then there exists an essentially unique
    factorization $X \to L_S X \to L_T X$, and the map $L_S X \to L_T
    X$ is a $T$-localization.
  \end{enumerate}
\end{prop}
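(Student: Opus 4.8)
The plan is to prove the three parts in order, since each feeds into the next. For part (1), I would take any $T$-local object $Y$ and any map $f\co A \to B$ in $S$. By hypothesis $f$ is a $T$-equivalence, so by the very definition of $T$-equivalence the map $\Map_\mathcal{C}(B,Y) \to \Map_\mathcal{C}(A,Y)$ is a weak equivalence for every $T$-local $Y$. Applied to our $Y$, this is exactly the condition for $Y$ to be $S$-local against $f$; since $f$ was arbitrary in $S$, the object $Y$ is $S$-local.

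For part (2), let $g\co A \to B$ be an $S$-equivalence and let $Y$ be any $T$-local object. By part (1) the object $Y$ is also $S$-local, so the definition of $S$-equivalence gives that $\Map_\mathcal{C}(B,Y) \to \Map_\mathcal{C}(A,Y)$ is a weak equivalence. As this holds for every $T$-local $Y$, the map $g$ is a $T$-equivalence.

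Part (3) is where the real content lies, and I would split it into an existence/uniqueness step and a recognition step. Since $X \to L_S X$ is an $S$-equivalence and $L_T X$ is $T$-local, hence $S$-local by part (1), applying $\Map_\mathcal{C}(-, L_T X)$ to $X \to L_S X$ yields a weak equivalence
\[
  \Map_\mathcal{C}(L_S X, L_T X) \xrightarrow{\ \sim\ } \Map_\mathcal{C}(X, L_T X).
\]
The given localization map $X \to L_T X$ is a point of the target; because the displayed map is a weak equivalence, its homotopy fiber over that point is contractible. That contractible space is precisely the space of factorizations of $X \to L_T X$ through $L_S X$, so it simultaneously produces the desired map $L_S X \to L_T X$ under $X$ and encodes its essential uniqueness. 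To recognize this map as a $T$-localization, I note that $L_T X$ is $T$-local by hypothesis, so it remains only to check that $L_S X \to L_T X$ is a $T$-equivalence: the map $X \to L_S X$ is an $S$-equivalence and hence a $T$-equivalence by part (2), while the composite $X \to L_T X$ is a $T$-equivalence by assumption, so the two-out-of-three property for $T$-equivalences from Proposition~\ref{prop:saturation-omnibus} forces $L_S X \to L_T X$ to be a $T$-equivalence.

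I expect the only delicate point to be the essential uniqueness in part (3): one must read the contractibility of the homotopy fiber of the mapping-space equivalence as a genuine statement about the space of lifts, exactly as in the uniqueness argument behind Proposition~\ref{prop:uniquelocalization}, rather than merely extracting a single homotopy class of map. Everything else is a direct unwinding of the definitions together with the closure properties already established.
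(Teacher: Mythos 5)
Your proof is correct and follows essentially the same route as the paper: parts (1) and (2) are the identical unwinding of definitions, and part (3) uses the same mapping-space equivalence to get a contractible space of factorizations, followed by the same two-out-of-three argument to recognize $L_S X \to L_T X$ as a $T$-equivalence with $T$-local target. The only cosmetic difference is that you justify the equivalence $\Map_{\mathcal{C}}(L_S X, L_T X) \to \Map_{\mathcal{C}}(X, L_T X)$ via ``$L_T X$ is $S$-local and $X \to L_S X$ is an $S$-equivalence,'' while the paper uses ``$X \to L_S X$ is a $T$-equivalence and $L_T X$ is $T$-local''---both are immediate from parts (1) and (2).
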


\begin{proof}
  \begin{enumerate}
  \item By assumption, every map $f\co A \to B$ in $S$ is a
    $T$-equivalence, and so for any $T$-local object $Y$ we get an
    equivalence $\Map_{\mathcal{C}}(B,Y) \to
    \Map_{\mathcal{C}}(A,Y)$. Thus by definition $Y$ is $S$-local.
  \item If $f\co A \to B$ is an $S$-equivalence, and $Y$ is any
    $T$-local object, then by the previous point $Y$ is also
    $S$-local, and so we get an equivalence
    $\Map_{\mathcal{C}}(B,Y) \to \Map_{\mathcal{C}}(A,Y)$. Since $Y$
    was an arbitrary $T$-local object, $f$ is therefore a
    $T$-equivalence.
  \item Since $X \to L_S X$ is an $S$-equivalence, the previous point
    shows that it is a $T$-equivalence and so we have an equivalence
    \[
      \Map_{\mathcal{C}}(L_S X, L_T X) \to \Map_{\mathcal{C}}(X, L_T
      X).
    \]
    As a result, the chosen map $X \to L_T X$ has a contractible space
    of homotopy commuting factorizations $X \to L_S X \to L_T X$. As
    the maps $X \to L_S X$ and $X \to L_T X$ are both
    $T$-equivalences, the 2-out-of-3 property implies that $L_S X \to
    L_T X$ is also a $T$-equivalence whose target is $T$-local. By
    definition, this makes $L_T X$ into a $T$-localization of $L_S X$.\qedhere
  \end{enumerate}
\end{proof}

\section{Lifting criteria for localizations}
\label{sec:lifting}

In this section we will observe that, if $\mathcal{C}$ has homotopy
pushouts, we can characterize local objects in terms of a lifting
criterion. To do so, we will need to establish a few preliminaries.
Fix a collection $S$ of maps in $\mathcal{C}$.

\begin{prop}
  Suppose that $f\co A \to B$ is an $S$-equivalence, and that
  $\mathcal{C}$ has homotopy pushouts. Then the map
  \[
    \hocolim (B \leftarrow A \to B) \to B
  \]
  is an $S$-equivalence.
\end{prop}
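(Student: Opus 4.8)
The plan is to recognize the fold map as a retraction of one of the structure maps of the homotopy pushout, and then to invoke the closure properties already established in Proposition~\ref{prop:saturation-omnibus}. Write $P = \hocolim(B \leftarrow A \to B)$ for the homotopy pushout, where both maps in the span are the given $S$-equivalence $f$. The pushout comes equipped with two structure maps $\iota_1,\iota_2\co B \to P$ together with a homotopy between $\iota_1 f$ and $\iota_2 f$. The fold map $\nabla\co P \to B$ is induced by the cocone consisting of the two identity maps $\mathrm{id}_B\co B \to B$ (which agree after precomposition with $f$), so that $\nabla \iota_1 \simeq \mathrm{id}_B \simeq \nabla \iota_2$. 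This $\nabla$ is the map whose status as an $S$-equivalence we must verify.

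First I would observe that $\iota_1\co B \to P$ is itself the homotopy pushout of $f$: the space $P$ fits into a homotopy pushout square with $f$ along the top, $f$ down the left, and $\iota_1$ along the bottom. Since $f$ is assumed to be an $S$-equivalence, part (5) of Proposition~\ref{prop:saturation-omnibus}, stating that the homotopy pushout of an $S$-equivalence is an $S$-equivalence, shows that $\iota_1$ is an $S$-equivalence.

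Then I would conclude using the two-out-of-three property, part (6) of the same proposition. The identity $\mathrm{id}_B$ is an $S$-equivalence, and it factors up to homotopy as $B \xrightarrow{\iota_1} P \xrightarrow{\nabla} B$. With $\iota_1$ and the composite $\nabla\iota_1 \simeq \mathrm{id}_B$ both $S$-equivalences, two-out-of-three (together with part (2), the closure of $S$-equivalences under homotopy equivalence, to replace $\nabla\iota_1$ by $\mathrm{id}_B$) forces $\nabla$ to be an $S$-equivalence.

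The only real care needed, and the step I expect to be the main bookkeeping obstacle, is the first identification: making precise that the structure map $\iota_1$ genuinely is the homotopy pushout of $f$ along $f$, so that part (5) applies. Should one prefer to avoid this, there is a direct alternative. For any $S$-local $Y$, applying $\Map_{\mathcal{C}}(-,Y)$ converts the defining homotopy colimit into the homotopy pullback of the cospan $\Map_{\mathcal{C}}(B,Y) \xrightarrow{f^*} \Map_{\mathcal{C}}(A,Y) \xleftarrow{f^*} \Map_{\mathcal{C}}(B,Y)$; since $f$ is an $S$-equivalence and $Y$ is $S$-local, $f^*$ is a weak equivalence, so the projections out of this homotopy pullback are weak equivalences, and $\nabla^*$ is a section of such a projection, hence itself a weak equivalence by two-out-of-three. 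As $Y$ was an arbitrary $S$-local object, $\nabla$ is an $S$-equivalence.
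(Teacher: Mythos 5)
Your proof is correct, but it runs along a genuinely different track from the paper's. The paper treats the fold map as the map on homotopy pushouts induced by the levelwise map of spans from $(B \leftarrow A \to B)$ to $(B \leftarrow B \to B)$, whose components $\mathrm{id}_B$, $f$, $\mathrm{id}_B$ are all $S$-equivalences, and then applies closure of $S$-equivalences under homotopy colimits (part (4) of Proposition~\ref{prop:saturation-omnibus}) together with the identification of $\hocolim(B \leftarrow B \to B)$ with $B$. You instead invoke parts (5) and (6) of the same proposition: you identify the structure map $\iota_1\co B \to P$ as the cobase change of $f$ along $f$, hence an $S$-equivalence, and then run the retraction/two-out-of-three argument using $\nabla\iota_1 \simeq \mathrm{id}_B$, with part (2) letting you replace the homotopic composite by the identity. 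The bookkeeping burdens are comparable: the paper must check that the fold map corresponds to the induced map of pushouts and that the pushout of the constant span is $B$; you must check that the defining square of $P$ exhibits $\iota_1$ as a homotopy pushout of $f$ --- exactly the step you flag, and it does hold, since that square is by definition a homotopy pushout square with $f$ on top and $f$ on the left. Your route has the mild advantage of isolating a reusable principle (any map admitting a section that is an $S$-equivalence is itself an $S$-equivalence), while the paper's is a one-line application of colimit closure. Your fallback argument via $\Map_{\mathcal{C}}(-,Y)$ for $S$-local $Y$ is also correct, and it is essentially the proof of Proposition~\ref{prop:saturation-omnibus} specialized to this situation, making it the most self-contained of the three.
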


\begin{proof}
  The map in question is equivalent to the map of homotopy pushouts
  induced by the diagram
  \[
    \xymatrix{
      B \ar@{=}[d] & A \ar[l]_f \ar[r]^f \ar[d]_f & B \ar@{=}[d] \\
      B & B \ar[l] \ar[r] & B.
    }
  \]
  However, the vertical maps are $S$-equivalences, and so by
  Proposition~\ref{prop:saturation-omnibus} the map $\hocolim(B
  \leftarrow A \to B) \to B$ is an $S$-equivalence.
\end{proof}

The lifting criterion we are about to describe rests on the following
useful characterization of connectivity of a map.

\begin{lem}
  \label{lem:inductive-extension}
  Suppose that $f\co X \to Y$ is a map of spaces and $N \geq 0$. Then
  $f$ is $N$-connected if and only if the following two criteria are
  satisfied:
  \begin{enumerate}
  \item the map $\pi_0(X) \to \pi_0(Y)$ is surjective, and
  \item the diagonal map $X \to\holim(X \to Y \leftarrow X)$ is
    $(N-1)$-connected.
  \end{enumerate}
\end{lem}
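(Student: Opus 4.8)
The plan is to reduce the connectivity of $f$ to that of a single homotopy fiber, and to recognize the homotopy fibers of the diagonal map as loop spaces of the homotopy fibers of $f$. Write $P = \holim(X \to Y \leftarrow X)$ for the homotopy pullback, $p_1\co P \to X$ for the first projection, and $\Delta\co X \to P$ for the diagonal, so that $\Delta$ is a section of $p_1$. The square exhibiting $P$ as a homotopy pullback has $f$ along one edge, so the comparison on vertical homotopy fibers is an equivalence: for each $x \in X$ the homotopy fiber of $p_1$ over $x$ is identified with the homotopy fiber $F$ of $f$ over $f(x)$. This is the standard fact that homotopy pullbacks preserve homotopy fibers.

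The technical heart is to compute the homotopy fibers of $\Delta$. Since $\Delta$ is a section of the fibration $p_1$ whose fiber is $F$, I would show that the homotopy fiber of $\Delta$ over a point $\Delta(x)$ is $\Omega F$, the loop space of $F$. Concretely, I would model this homotopy fiber as the space of paths in $P$ that begin on the diagonal and end at $\Delta(x)$, project such a path to its image in $X$, and observe that this realizes the homotopy fiber of $\Delta$ as fibered over the contractible space of paths in $X$ ending at $x$, with fiber the loops in $F$ based at $\Delta(x)$. Granting the resulting equivalence $\mathrm{hofib}_{\Delta(x)}(\Delta) \simeq \Omega F$, the whole statement becomes homotopy-group bookkeeping: since $\pi_i(\Omega F) = \pi_{i+1}(F)$, the space $\Omega F$ is $(N-2)$-connected exactly when $\pi_j(F) = 0$ for $1 \le j \le N-1$.

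With this in hand both implications become statements about the fibers $F$, using the definition that a map is $k$-connected precisely when all of its homotopy fibers are $(k-1)$-connected and it is surjective on $\pi_0$. Condition (1) is nothing but the nonemptiness of every homotopy fiber of $f$, which the diagonal cannot detect, and is why it must be imposed separately. The part that needs care — and which I expect to be the main obstacle — is the bottom of the range: for $N \ge 1$, the assertion that $\Delta$ is $(N-1)$-connected includes surjectivity of $\pi_0(\Delta)$, and I must match this with connectedness of the fibers $F$. I would unwind surjectivity of $\pi_0(\Delta)$ as the statement that every component of $P$ contains a diagonal point $(x,x,c_{f(x)})$, where $c_{f(x)}$ is the constant path; feeding in an arbitrary point $(x_1,x_2,\gamma)$ of $P$ then produces paths $x_1 \rightsquigarrow x$ and $x_2 \rightsquigarrow x$ in $X$ whose image realizes $\gamma$ up to homotopy. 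Reading this off gives simultaneously that $\pi_0(X) \to \pi_0(Y)$ is injective and (taking $x_1 = x_2$ with $\gamma$ a loop) that $\pi_1(X) \to \pi_1(Y)$ is surjective, which together with nonemptiness are exactly the conditions that every $F$ is connected, i.e.\ $\pi_0(F) = *$.

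Finally I would assemble the two directions. For $N = 0$ both the diagonal condition and the loop-space computation are vacuous, and the claim collapses to the tautology that $0$-connectedness of $f$ is surjectivity on $\pi_0$, i.e.\ condition (1). For $N \ge 1$, the forward direction runs the same path manipulation to verify surjectivity of $\pi_0(\Delta)$ from the hypothesis that $f$ is in particular $1$-connected, and uses that $F$ being $(N-1)$-connected makes $\Omega F$ be $(N-2)$-connected; the reverse direction is precisely the bookkeeping of the previous paragraph together with condition (1). The only delicate points throughout are the low-dimensional ones — components, fundamental groups, and the need to treat all basepoints at once — and these are exactly what the section/loop-space identification and the explicit path argument are designed to control.
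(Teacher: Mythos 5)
Your proposal is correct and takes essentially the same route as the paper: both identify the homotopy fibers of the projection $\holim(X \to Y \leftarrow X) \to X$ with those of $f$ and then exploit that the diagonal is a section of this projection. The only difference is that where the paper simply asserts the retraction principle (a retraction is $N$-connected if and only if its section is $(N-1)$-connected), you prove it by hand via the identification $\mathrm{hofib}(\Delta) \simeq \Omega F$ together with the explicit $\pi_0$-level path analysis, which is a correct expansion of the step the paper leaves implicit.
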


\begin{proof}
  The map $f$ is $N$-connected if and only if it is surjective on
  $\pi_0$ and, for all basepoints $x \in X$, the homotopy fiber $Ff$
  over $f(x)$ is $(N-1)$-connective.

  However, $Ff$ is equivalent to the homotopy fiber of
  $\holim(X \to Y \leftarrow X) \to X$ over $x$, and so this second
  condition is equivalent to $\holim(X \to Y \leftarrow X) \to X$
  being $N$-connected. The composite
  $X \to\holim(X \to Y \leftarrow X) \to X$ is the identity, and the
  map $\holim(X \to Y \leftarrow X) \to X$ is $N$-connected if and
  only if the map $X \to\holim(X \to Y \leftarrow X)$ is
  $(N-1)$-connected.
\end{proof}

\begin{cor}
  Suppose that $\mathcal{C}$ has homotopy pushouts and that we have a
  map $f_0\co A_0 \to B$ in $\mathcal{C}$. Inductively define the
  $n$-fold double mapping cylinder $f_n$ as the map
  \[
    A_n = \hocolim(B \leftarrow A_{n-1} \to B) \to B.
  \]
  Then an object $Y$ is $f_0$-local if and only if the maps
  \[
    \Hom_{h\mathcal{C}}(B,Y) \to \Hom_{h\mathcal{C}}(A_n,Y)
  \]
  are surjective; equivalently, for any map $A_n \to Y$ there is a map
  $B \to Y$ such that the diagram
  \[
    \xymatrix{
      A_n \ar[r] \ar[d]_{f_n} & Y\\
      B \ar@{.>}[ur]
    }
  \]
  is homotopy commutative.
\end{cor}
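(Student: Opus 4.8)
The plan is to reduce the whole statement to a single assertion about the connectivity of one map of spaces, and then to feed the double mapping cylinders into Lemma~\ref{lem:inductive-extension} as an inductive engine. Fix $Y$ and abbreviate $M = \Map_{\mathcal{C}}(B,Y)$ and $M_n = \Map_{\mathcal{C}}(A_n,Y)$, and write $g_n \co M \to M_n$ for the map $f_n^*$ induced by $f_n\co A_n \to B$. By definition $Y$ is $f_0$-local exactly when $g_0$ is a weak equivalence, which is the same as saying $g_0$ is $N$-connected for every $N \geq 0$. Since $\pi_0\Map_{\mathcal{C}}(X,Y) = \Hom_{h\mathcal{C}}(X,Y)$, the hypothesis that $\Hom_{h\mathcal{C}}(B,Y) \to \Hom_{h\mathcal{C}}(A_n,Y)$ is surjective is literally the statement that $g_n$ is surjective on $\pi_0$; the final ``equivalently'' clause is just the translation of $\pi_0$-surjectivity into a homotopy-lifting statement, so I will not treat it separately.

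First I would compute the mapping spaces out of the cylinders. Applying $\Map_{\mathcal{C}}(-,Y)$ to $A_n = \hocolim(B \leftarrow A_{n-1} \to B)$ and invoking the defining adjunction turning homotopy colimits into homotopy limits of mapping spaces gives a natural equivalence
\[
M_n \simeq \holim\bigl(M \too{g_{n-1}} M_{n-1} \xleftarrow{\,g_{n-1}\,} M\bigr).
\]
The essential point---and the step I expect to be the main obstacle---is to verify that under this equivalence the map $g_n\co M \to M_n$ is precisely the diagonal map into the homotopy limit. This holds because $f_n$ restricts to the identity on each of the two copies of $B$ in the double mapping cylinder, so each projection $M_n \to M$ post-composed with $g_n$ is the identity and the mediating path is constant; but one genuinely has to check that the fold map dualizes to the diagonal rather than merely assert it, and this is where the bookkeeping lives.

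With that identification in hand, Lemma~\ref{lem:inductive-extension} applies directly with $X = M$ and (the lemma's) $Y = M_{n-1}$ and $f = g_{n-1}$: its diagonal $M \to \holim(M \to M_{n-1} \leftarrow M) \simeq M_n$ is exactly $g_n$. Re-indexing, the lemma yields for every $m, N \geq 0$ the recursion that $g_m$ is $N$-connected if and only if $g_m$ is surjective on $\pi_0$ and $g_{m+1}$ is $(N-1)$-connected.

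Finally I would run an induction on $N$. Writing $P_k$ for the statement that $g_k$ is surjective on $\pi_0$, the base case is that $0$-connectedness of a map is exactly $\pi_0$-surjectivity, and the recursion above gives the inductive step, proving that for all $m$ the map $g_m$ is $N$-connected if and only if $P_m, P_{m+1}, \dots, P_{m+N}$ all hold. Specializing to $m = 0$ and letting $N \to \infty$, the map $g_0$ is a weak equivalence---equivalently $Y$ is $f_0$-local---if and only if every $P_k$ holds, i.e.\ if and only if all the maps $\Hom_{h\mathcal{C}}(B,Y) \to \Hom_{h\mathcal{C}}(A_k,Y)$ are surjective, which is the claim.
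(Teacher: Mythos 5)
Your proof is correct and follows essentially the same route as the paper's: the same identification $\Map_{\mathcal{C}}(A_n,Y) \simeq \holim\left[\Map_{\mathcal{C}}(B,Y) \to \Map_{\mathcal{C}}(A_{n-1},Y) \leftarrow \Map_{\mathcal{C}}(B,Y)\right]$, followed by an inductive application of Lemma~\ref{lem:inductive-extension}. The only difference is that you make explicit the bookkeeping the paper compresses into ``inductive application''---notably the verification that $f_n^*$ dualizes to the diagonal map into the homotopy limit, which is exactly the point the lemma requires and which you correctly flag and justify.
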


\begin{proof}
  We note that the definition of $A_n$ gives an identification
  \[
    \Map_{\mathcal{C}}(A_n,Y) \simeq \holim\left[\Map_{\mathcal{C}}(B,Y)
    \to \Map_{\mathcal{C}}(A_{n-1},Y) \leftarrow
    \Map_{\mathcal{C}}(B,Y)\right].
  \]
  Inductive application of Lemma~\ref{lem:inductive-extension} shows
  that the map
  \[
    \Map_{\mathcal{C}}(B,Y) \to \Map_{\mathcal{C}}(A_0,Y)
  \]
  is $N$-connected if and only if the maps
  \[
    \Hom_{h\mathcal{C}}(B,Y) \to \Hom_{h\mathcal{C}}(A_n,Y)
  \]
  are surjective for $0 \leq n \leq N$. Letting $N$ grow arbitrarily
  large, we find that $Y$ is $f_0$-local if and only of the maps
  \[
    \Hom_{h\mathcal{C}}(B,Y) \to \Hom_{h\mathcal{C}}(A_n,Y)
  \]
  are surjective for all $n \geq 0$.
\end{proof}

\begin{exam}
  Suppose that $\mathcal{C}$ has homotopy pushouts and that $f\co W
  \to *$ is a map to a homotopy terminal object of $\mathcal{C}$. Then
  the iterated double mapping cylinders are the maps $\Sigma^t
  W \to *$, and an object of $\mathcal{C}$ is $f$-local if and only if
  every map $\Sigma^t W \to Y$ factors, up to homotopy, through $*$.
\end{exam}

\begin{exam}
  In the category of spaces $\mathcal{S}$, the iterated double mapping
  cylinders $f_n$ of a cofibration $f_0\co A \to B$ have a more
  familiar description as the \emph{pushout-product} maps
  \[
    (S^{n-1} \times B) \mathop\cup_{S^{n-1} \times A} (D^n \times A) \to D^n
    \times B \to B.
  \]
\end{exam}

\section{The small object argument}
\label{sec:small-object}

We now sketch how, when we have some form of colimits in our category,
Bousfield localizations can often be constructed using the small object
argument.

From the previous section we know that we can replace the mapping
space criterion for local objects with a lifting criterion when
$\mathcal{C}$ has homotopy colimits, as follows. Given a map
$f_0\co A_0 \to B$, we construct iterated double mapping cylinders
$f_n\co A_n \to B$, and we find that an object is $Y$ is $f_0$-local
if and only if every map $g\co A_n \to Y$ can be extended to a map
$\tilde g\co B \to Y$ up to homotopy. More generally we can enlarge a
collection of maps $S$ to a collection $T$ closed under double mapping
cylinders, and ask whether $Y$ satisfies an extension property with
respect to $T$.

This leads to an inductive method.
\begin{enumerate}
\item Start with $Y_0= Y$.
\item Given $Y_\alpha$, either $Y_\alpha$ is local (in which case we
  are done) or there exists some set of maps $A_i \to B_i$ in
  $T$ and maps $g_i\co A_i \to Y_\alpha$ which do not extend to
  $B_i$ up to homotopy. Form the homotopy pushout of the diagram
  \[
    \coprod_i B_i \leftarrow \coprod_i A_i \rightarrow Y_\alpha
  \]
  and call it $Y_{\alpha+1}$. The map $Y_\alpha \to Y_{\alpha+1}$ is
  an $S$-equivalence because it is a homotopy pushout along an
  $S$-equivalence, and all the extension problems that
  $Y_\alpha$ had now have solutions in $Y_{\alpha+1}$.
\item Once we have constructed $Y_0, Y_1, Y_2, \dots$, define
  $Y_\omega = \hocolim Y_n$. More generally, once we have constructed
  $Y_\alpha$ for all ordinals $\alpha$ less than some limit ordinal
  $\beta$, we define $Y_\beta = \hocolim Y_\alpha$. The map
  $Y \to Y_\beta$ is a homotopy colimit of $S$-equivalences and hence
  an $S$-equivalence.
\end{enumerate}

The critical thing that we need is that \emph{this procedure can be
  stopped at some point}, and for this we typically need to know that
there will be some ordinal $\beta$ which is so big that any map
$A_i \to Y_\beta$ automatically factors, up to homotopy, through some
object $Y_\alpha$ with $\alpha < \beta$. This is a \emph{compactness}
property of the objects $A_i$, and this argument is called the
\emph{small object argument}. If we work on the point-set level this
can be addressed using Smith's theory of combinatorial model
categories; if we work on the homotopical level this can be addressed
using Lurie's theory of presentable $\infty$-categories. We will
discuss these approaches in \S\ref{sec:model-categories} and
\S\ref{sec:infty-categories}.

Another important aspect of the small object argument is that it can
prove additional properties about localization maps. If $S$ is a
collection of maps all satisfying some property $P$ of maps in the
homotopy category, and property $P$ is preserved under homotopy pushouts
and transfinite homotopy colimits, then this process constructs a
localization $Y \to LY$ that also has property $P$. Since
localizations are essentially unique, any localization automatically
has property $P$ as well.

\begin{rmk}
  If our category $\mathcal{C}$ does not have enough colimits, the
  small object argument may not apply. However, Bousfield
  localizations may still exist even if this particular construction
  cannot be applied.
\end{rmk}

\section{Unstable settings}

The classical examples of Bousfield localization are localizations of
spaces. It is worthwhile first relating the localization condition to
based mapping spaces.
\begin{prop}
  Suppose $f\co A \to B$ is a map of well-pointed spaces with
  basepoint. Then a space $Y$ is $f$-local in the category of
  \emph{unbased} spaces if and only if, for all basepoints $y \in Y$,
  the restriction
  \[
    f^*\co \Map_*(B,Y) \to \Map_*(A,Y)
  \]
  of based mapping spaces is a weak equivalence.
\end{prop}

\begin{proof}
  Evaluation at the basepoint gives a map of fibration sequences
  \[
    \xymatrix{
      \Map_*(B,Y) \ar[d] \ar[r] & \Map(B,Y) \ar[d] \ar[r] & Y \ar@{=}[d] \\
      \Map_*(A,Y) \ar[r] & \Map(A,Y) \ar[r] & Y.
    }
  \]
  The center vertical map is an isomorphism on $\pi_*$ at any
  basepoint if and only if the left-hand map is.
\end{proof}

% \begin{exam}
%   Suppose that $f\co A \to B$ is a map of path-connected pointed
%   spaces and $Y$ is a pointed space. Then the inclusion $Y_0 \to Y$ is
%   $f$-local in the category of \emph{based} spaces, where $Y_0$ is the
%   basepoint component of $Y$.
% \end{exam}

\begin{rmk}
  As $S$-equivalences are preserved under homotopy pushouts and the
  2-out-of-3 axiom, we find that any space $Y$ local with respect to
  $f\co A \to B$ is also local with respect to the map $B/A \to *$
  from the homotopy cofiber to a point, and thus that every path
  component of $Y$ has a contractible space of based maps $B/A \to
  Y$. However, we will see shortly that the converse does not hold in
  general.
\end{rmk}

\begin{exam}
  Let $\mathcal{S}$ be the category of spaces, and take $f$ to be the
  map $S^n \to *$. Then a space $X$ is $f$-local if and only if, for
  any basepoint $x \in X$, the iterated loop space $\Omega^nX$ at $x$
  is weakly contractible. Equivalently, for $n \geq 1$ the space $X$
  is $f$-local if and only if it is \emph{$(n-1)$-truncated}:
  $\pi_k(X,x)$ is trivial for all $k \geq n$ and all $x \in X$.

  A map $A \to B$ of CW-complexes, by obstruction theory, is an
  $f$-equivalence if and only if it is $(n-1)$-connected. Therefore,
  for $n > 0$ a map $A \to B$ of CW-complexes is an $f$-localization
  if and only if $\pi_k(A) \to \pi_k(B)$ is an isomorphism for
  $0 \leq k < n$ and all basepoints, but $\pi_k B$ vanishes for all
  $k \geq n$ and all basepoints.\footnote{We should be careful about
    edge cases. When $n=0$, $X$ is $(-1)$-truncated if and only if it
    is either empty or weakly contractible. By convention,
    $S^{-1} = \emptyset$, and $X$ is $(-2)$-truncated if and only if
    it is weakly contractible.

    When $n=0$ a map $A \to B$ is an $f$-equivalence if and only if
    either both $A$ and $B$ are empty or neither of them is, and a map
    $A \to X$ is an $f$-localization if and only if either $A$ is
    nonempty and $X$ is weakly contractible, or $A$ and $X$ are both
    empty. When $n=-1$ any map is an $f$-equivalence, and a map
    $A \to X$ is an $f$-localization if and only if $X$ is weakly
    contractible.} This characterizes a stage $P_{n-1}(X)$ in the
  Postnikov tower of $X$.
\end{exam}

\begin{exam}
  Let $f$ be the inclusion $S^n \vee S^m \to S^n \times S^m$ of
  spaces. The Cartesian product is formed by attaching an $(n+m)$-cell
  to $S^n \vee S^m$ along an attaching map given by a Whitehead
  product $[\iota_n,\iota_m] \in \pi_{n+m-1}(S^n \vee S^m)$. Any map
  $S^n \vee S^m \to X$, classifying a pair of elements
  $\alpha \in \pi_n(X)$ and $\beta \in \pi_m(X)$ at some basepoint
  $x$, sends this attaching map to $[\alpha,\beta]$. The fiber of
  $\Map(S^n \times S^m, X) \to \Map(S^n \vee S^m, X)$ over the
  corresponding point is either empty (if $[\alpha,\beta]$ is
  nontrivial) or equivalent to the iterated loop space
  $\Omega^{n+m} X$ at $x$ (if $[\alpha,\beta]$ is trivial). A space
  $X$ is therefore local with respect to $f$ if and only if, at any
  basepoint, the homotopy groups $\pi_k(X)$ are zero for all
  $k \geq n+m$ and the Whitehead products
  \[
    \pi_n(X,x) \times \pi_m(X,x) \to \pi_{n+m-1}(X,x)
  \]
  vanish at any basepoint $x$.

  Consider the case $n=m=1$. For a path-connected CW-complex $X$
  with fundamental group $G$, the map $X \to K(G_{ab},1)$ is an
  $f$-localization.
\end{exam}

\begin{exam}
  If $A$ is nonempty, then a space $Y$ is local with respect to
  $f\co \emptyset \to A$ if and only if $Y$ is weakly contractible. All
  maps are $f$-equivalences, and $X \to *$ is always an
  $f$-localization.
\end{exam}

\begin{exam}
  Consider a degree-$p$ map $f\co S^1 \to S^1$. A space $Y$ is
  $f$-local if and only if it is local for degree-$p$ maps $S^n \to
  S^n$, and this occurs if and only if the $p$'th power maps $\pi_n(Y)
  \to \pi_n(Y)$ are all isomorphisms.

  By contrast, let $M(\mb Z/p,1)$ be the Moore space constructed as
  the cofiber of $f$, and consider the map $g\co M(\mb Z/p,1) \to *$.
  A space $Y$ is $g$-local if and only if it satisfies the extension
  condition for the maps $M(\mb Z/p,n) \to *$ for all $n \geq 1$, or
  equivalently if the mod-$p$ homotopy sets $\pi_n(Y;\mb Z/p)$
  vanish for all $n \geq 2$. This is equivalent to the
  $p$'th-power maps being isomorphisms on $\pi_n(Y)$ for all $n > 1$ 
  and injective on $\pi_1(Y)$.
\end{exam}

\begin{exam}[{\cite{neisendorfer-book}}]
  Let $S$ be the set of maps $\{ K(\mb Z/p,1) \to * \}$ as $p$ ranges
  over the prime numbers. Then the Sullivan conjecture, as proven by
  Miller \cite{miller-sullivanconjecture}, is equivalent to the
  statement that any finite CW-complex $X$ is $S$-local. Since
  $S$-equivalences are closed under products and homotopy colimits,
  the expression of $K(\mb Z/p, n+1)$ as the geometric realization of
  the bar construction $\{K(\mb Z/p, n)^q\}$ shows inductively that
  the maps $K(\mb Z/p,n) \to *$ are all $S$-equivalences. However, if
  $Y$ is any nontrivial 1-connected space with finitely generated
  homotopy groups and a finite Postnikov tower, then $Y$ accepts a
  nontrivial map from some $K(\mb Z/p, n)$ and hence cannot be
  $S$-local. This argument shows that a simply-connected finite
  CW-complex with nonzero mod-$p$ homology has $p$-torsion in
  infinitely many nonzero homotopy groups, which was conjectured by
  Serre in the early 1950's and proven by McGibbon and Neisendorfer
  \cite{mcgibbon-neisendorfer-serreconjecture}.
\end{exam}

Localization still applies to other categories closely related to
topological spaces.

\begin{exam}
  Let $\mathcal{C}$ be the category of based spaces. A based space $Y$
  is local with respect to the based map $* \to S^1$ if and only if
  the loop space $\Omega Y$ is weakly contractible, or equivalently if
  and only if the path component $Y_0$ of the basepoint is weakly
  contractible. A model for the Bousfield localization is given by the
  mapping cone of the map $Y_0 \to Y$.
\end{exam}

\begin{exam}
  Fix a discrete group $G$, and consider the category of $G$-spaces:
  spaces with a continuous action of a group $G$, with maps being
  continuous maps. For example, the empty space has a unique
  $G$-action, while the orbit spaces $G/H$ have continuous actions
  under the discrete topology. Every $G$-space has fixed-point
  subspaces $X^H \cong \Map_G(G/H,X)$ for subgroups $H$ of $G$. In
  this context, there is an abundance of examples of localizations.
  
  A $G$-space $Y$ is local with respect to $\emptyset \to *$ if and only
  if the fixed-point subspace $Y^G$ is contractible. A model for
  Bousfield localization is given by the mapping cone of the map $Y^G
  \to Y$.

  Fix a model for the universal contractible $G$-space $EG$. A
  $G$-space $Y$ is local with respect to $EG \to *$ if and only if the
  map from the fixed point space $Y^G$ to the homotopy fixed point
  space $\Map_G(EG,Y) = Y^{hG}$ is a weak equivalence. Since there is a
  $G$-equivariant homotopy equivalence $EG \times EG \to EG$, a model
  for the Bousfield localization is the space of nonequivariant maps
  $\Map(EG,Y)$, with $G$ acting by conjugation.

  A $G$-space $Y$ is local with respect to $\emptyset \to G$ if and only
  if the underlying space $Y$ is contractible. A model for the
  Bousfield localization is given by the mapping cone of the map $EG
  \times Y \to Y$, sometimes called $\widetilde{EG} \wedge Y_+$.
\end{exam}

\begin{exam}
  Fix a collection $S$ of maps and a space $Z$, letting $\mathcal{C}$
  be the category of spaces over $Z$. We say that a map $X \to Y$ of
  spaces over $Z$ is a \emph{fiberwise $S$-equivalence} if the map of
  homotopy fibers over any point $z \in Z$ is an $S$-equivalence, and
  refer to the corresponding localizations as \emph{fiberwise
    $S$-localizations}.

  A map $X \to Y$ over $Z$ which is a weak equivalence on underlying
  spaces is in particular a fiberwise $S$-equivalence. Applying this
  to the lifting characterization of fibrations, we can find that for
  an object $Y \to Z$ of $\mathcal{C}$ to be fiberwise $S$-local the
  map $Y \to Z$ must be a fibration. Moreover, for fibrations
  $Y \to Z$ we can recharacterize being local. Given any map
  $f\co A \to B$ in $S$ and any point $z \in Z$, there is a map in
  $\mathcal{C}$ of the form $f_z\co A \to B \to \{z\} \subset Z$
  concentrated entirely over the point $z$; let $S_Z$ be the set of
  all such maps. A fibration $Y \to Z$ in $\mathcal{C}$ fiberwise
  $S$-local if and only if it is $S_Z$-local in $\mathcal{C}$.

  Fiberwise localizations were constructed by Farjoun in
  \cite[1.F.3]{farjoun-localization}; they are also constructed in
  \cite[\S 7]{hirschhorn} and characterized from several perspectives.
\end{exam}

\begin{exam}
  The category of topological monoids and continuous homomorphisms has
  its own homotopy theory. Consider the inclusion
  $f\co \mb N \to \mb Z$ of discrete monoids. Then
  $\Map^{mon}(\mb Z, M) \to \Map^{mon}(\mb N, M)$ is isomorphic to the
  map $M^\times \to M$ from the space of invertible elements of $M$ to
  the space $M$.\footnote{As a point-set digression the reader should,
    as usual, be warned that the source may not have the subspace
    topology. The space of invertible elements is, instead,
    homeomorphic to the subspace of $M \times M$ of pairs of elements
    $(x,y)$ such that $xy = yx = 1$.}  An $f$-local object is a
  topological group, and localization is a topologized version of
  group-completion.

  We note, however, that the map $\mb N \to \mb Z$ does not
  participate well with \emph{weak} equivalences of topological
  monoids: weakly equivalent topological monoids do not have weakly
  equivalent spaces of invertible elements because homomorphisms out
  of $\mb Z$ are not homotopical. We can get a version that respects
  weak equivalences in two ways. With model categories, we can factor
  the map $\mb N \to \mb Z$ as
  $\mb N \hookrightarrow \mb Z_c \too{\simeq} \mb Z$ in the category
  of topological monoids, where $\mb Z_c$ is a cofibrant topological
  monoid, and there are explicit models for such. We could instead use
  coherent multiplications, where a map $\mb Z \to M$ is no longer
  required to strictly be a homomorphism but instead be a coherently
  multiplicative map.

  Using either correction, the space $M^\times$ of strict units
  becomes replaced, up to equivalence, by the pullback
  \[
    \xymatrix{
      M^{inv} \ar[r] \ar[d] & M \ar[d] \\
      \pi_0(M)^\times \ar[r] & \pi_0 M,
    }
  \]
  the union of the components of $M$ whose image in $\pi_0(M)$ has an
  inverse. A local object is then a \emph{grouplike} topological
  monoid, and localization is homotopy-theoretic group
  completion. These play a key role the study of iterated loop spaces
  and algebraic $K$-theory \cite{may-loopspaces,
    segal-categoriescohomology, mcduff-segal-groupcompletion}.
\end{exam}

\section{Stable settings}

One of the great benefits of the stable homotopy category, and stable
settings in general, is that a map $f\co X \to Y$ becoming an
equivalence is roughly the same as the cofiber $Y/X$ becoming
trivial.

We recall the definition of stability from \cite[\S
1.1.1]{lurie-higheralgebra}.

\begin{defn}
  The category $\mathcal{C}$ is \emph{stable} if it satisfies the
  following properties:
  \begin{enumerate}
  \item $\mathcal{C}$ is (homotopically) \emph{pointed}: there is an
    object $*$ such that, for all $X \in \mathcal{C}$, the spaces
    $\Map_{\mathcal{C}}(X,*)$ and $\Map_{\mathcal{C}}(*,X)$ are
    contractible.
  \item $\mathcal{C}$ has homotopy pushouts of diagrams $*
    \leftarrow X \to Y$ and homotopy pullbacks of diagrams $* \to Y
    \leftarrow X$.

    As a special case, we have suspension and loop objects:
    \begin{align*}
      \Sigma X &= \hocolim(* \leftarrow X \to *)\\
      \Omega X &= \holim(* \to X \leftarrow *)
    \end{align*}
  \item Suppose that we have a homotopy coherent diagram
    \[
      \xymatrix{
        X \ar[r] \ar[d] & Y \ar[d]\\
        \ast \ar[r] & Z,
      }
    \]
    meaning maps as given and a homotopy between the double
    composites. Then the induced map
    \[
      \hocolim (* \leftarrow X \to Y) \to Z
    \]
    is a homotopy equivalence if and only if the map
    \[
      X \to \holim (* \to Z \leftarrow Y)
    \]
    is a homotopy equivalence.

    Taking $Y = \ast$, we find that a map
    $X \to \Omega Z$ is an equivalence if and only if the
    homotopical adjoint $\Sigma X \to Z$ is an equivalence.
  \end{enumerate}
\end{defn}

\begin{exam}
  The category of (cofibrant--fibrant) spectra is the canonical
  example of a stable category.
\end{exam}

\begin{exam}
  For any ring $R$, there is a category $\mathcal{K}_R$ of chain complexes of
  $R$-modules. Any two complexes $C$ and $D$ have a Hom-complex
  $\Hom_R(C,D)$, and the Dold--Kan correspondence produces a
  simplicial set $\Map_{\mathcal{K}_R}(C,D)$ whose homotopy groups satisfy
  \[
    \pi_n \Map_{\mathcal{K}_R}(C,D) \cong H_n \Hom_R(C,D)
  \]
  for $n \geq 0$.\footnote{More generally, if $R[m]$ is the complex
    equal to $R$ in degree $n$ and zero elsewhere, then for all
    complexes $C$ we have $[R[m], C]_{h\mathcal{K}_R} \cong H_m(C)$.}
  This gives the category $\mathcal{K}_R$ of complexes an enrichment
  in simplicial sets, and these mapping spaces make the category
  $\mathcal{K}_R$ stable. Within this category there are many stable
  subcategories: categories of complexes which are bounded above or
  below or both, with homology groups bounded above or below or both,
  which are made up of projectives or injectives, and so on.

  We will write $\mathcal{C}_R$ be the category of cofibrant
  objects in the projective model structure on $R$, whose homotopy
  category is the derived category $D(R)$.
\end{exam}

\begin{thm}[{see \cite[Theorem~1.1.2.14]{lurie-higheralgebra}}]
  If $\mathcal{C}$ is stable, then the homotopy category
  $h\mathcal{C}$ has the structure of a triangulated category.
\end{thm}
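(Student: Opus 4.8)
The plan is to equip $h\mathcal{C}$ with the four pieces of data a triangulated structure demands—an additive structure, a shift autoequivalence, a class of distinguished triangles, and a verification of Verdier's axioms (TR1)--(TR4)—using the stability axioms as the sole engine. The crucial tool throughout is the third defining property of stability: a coherent square is a homotopy pushout if and only if it is a homotopy pullback. I will call this the \emph{pushout--pullback equivalence}, and essentially every step below is an application of it, together with the observation (recorded in the notation section) that homotopy (co)limits descend to well-defined functors on $h\mathcal{C}$ because they preserve objectwise equivalences.

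First I would produce the additive structure and the shift together. Pointedness gives an object $*$ with $\Map_{\mathcal{C}}(X,*)$ and $\Map_{\mathcal{C}}(*,X)$ contractible, so $*$ is a zero object of $h\mathcal{C}$. Applying the pushout--pullback equivalence to the diagram $X \leftarrow * \to Y$ identifies the coproduct $\hocolim(X \leftarrow * \to Y)$ with the product $\holim(X \to * \leftarrow Y)$, producing biproducts and hence a semiadditive structure. Next I define $\Sigma X = \hocolim(* \leftarrow X \to *)$ and $\Omega X = \holim(* \to X \leftarrow *)$; taking $Y = *$ in the stability axiom shows $X \to \Omega \Sigma X$ (and dually $\Sigma\Omega X \to X$) is an equivalence, so $\Sigma$ is an autoequivalence of $h\mathcal{C}$ with inverse $\Omega$, serving as the shift $[1]$. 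Finally, since $\Sigma$ is an equivalence, every object is equivalent to a double suspension $\Sigma^2(\Omega^2 X)$, so each $[X,Y]$ carries two compatible group structures and is abelian by Eckmann--Hilton; bilinearity of composition then upgrades $h\mathcal{C}$ from semiadditive to additive.

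I would then declare the distinguished triangles. For a map $f\co X \to Y$, form the cofiber $Z = \hocolim(* \leftarrow X \to Y)$, and read off a connecting map $\partial\co Z \to \Sigma X$ by pasting a second pushout square onto the cofiber square. A triangle is \emph{distinguished} if it is isomorphic in $h\mathcal{C}$ to one of the form $X \too{f} Y \to Z \too{\partial} \Sigma X$. Axiom (TR1)—that every $f$ sits in a triangle, that $X \xrightarrow{\mathrm{id}} X \to * \to \Sigma X$ is distinguished, and that distinguished triangles are closed under isomorphism—is then immediate from the construction and from Proposition~\ref{prop:saturation-omnibus}. Axiom (TR3), the fill-in property, follows from the universal property of the homotopy pushout defining $Z$: a square commuting on the first two terms of two triangles extends the given map over the pushout, yielding the required (non-unique) morphism of triangles.

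The main obstacle is the rotation axiom (TR2), which is precisely where invertibility of $\Sigma$ and the pushout--pullback equivalence do genuine work. I would verify it by the pasting law for homotopy pushouts applied to the diagram
\[
  \xymatrix{
    X \ar[r]^f \ar[d] & Y \ar[r] \ar[d] & * \ar[d] \\
    * \ar[r] & Z \ar[r]_-\partial & \Sigma X,
  }
\]
in which the left square defines $Z$ and the outer rectangle defines $\Sigma X$; the pasting law then forces the right square to be a pushout, exhibiting $Y \to Z \to \Sigma X$ as a cofiber sequence and hence the forward rotation as distinguished. The inverse rotation is the dual statement obtained through the pushout--pullback equivalence and the equivalence $\Omega \simeq \Sigma^{-1}$, and the sign $-\Sigma f$ in the rotated connecting map arises from the interchange symmetry on the double suspension used above. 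The octahedral axiom (TR4) is the most laborious to write down, but I expect it to be formal once rotation is in hand: given composable maps one builds the three cofiber squares and pastes them into a single large homotopy-commutative array, so that the octahedron's new triangle is itself a cofiber sequence by repeated use of the pasting law. Thus the conceptual difficulty concentrates in (TR2), while (TR4) is bookkeeping.
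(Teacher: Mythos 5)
You should first note that the paper does not prove this theorem at all---it quotes it from Lurie's \emph{Higher Algebra} (Theorem~1.1.2.14)---so the relevant comparison is with the cited proof, and your outline follows essentially that route: zero object from pointedness; $\Sigma$ an autoequivalence of $h\mathcal{C}$ with inverse $\Omega$ by taking $Y = *$ in the third axiom; abelian hom-groups by writing every object as a double suspension and applying Eckmann--Hilton; distinguished triangles as cofiber sequences with connecting map from a pasted square; (TR2) by the pasting law for homotopy pushouts, with the sign coming from the coordinate flip on the double cone; and (TR4) by assembling cofiber squares into one large coherent array. At the level of a sketch all of that is sound, including the slightly glossed point in (TR3) that one must first lift an $h\mathcal{C}$-commuting square to a coherent square in $\mathcal{C}$ by choosing a homotopy before invoking the universal property of the pushout.

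There is, however, one genuine gap, right at the start. You announce as ``the third defining property of stability'' that an \emph{arbitrary} coherent square is a homotopy pushout if and only if it is a homotopy pullback, and you use this to identify $\hocolim(X \leftarrow * \to Y)$ with $\holim(X \to * \leftarrow Y)$ and so produce biproducts. But the axiom as stated in the paper (and in Lurie's definition) asserts this equivalence only for squares with a zero object in one corner---it identifies fiber sequences with cofiber sequences and nothing more. The unrestricted pushout--pullback statement is a theorem (Lurie, Proposition~1.1.3.4) whose proof itself runs through first constructing finite (co)limits and the additive structure, so invoking it at this stage is circular; and the coproduct square has no zero corner, so it is not covered by the axiom. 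Worse, the paper's definition only guarantees pushouts of diagrams of the shape $* \leftarrow X \to Y$, so even the \emph{existence} of the pushout $\hocolim(X \leftarrow * \to Y)$ is not immediate. The repair is standard and reorders your own steps: the Eckmann--Hilton argument is independent of biproducts, so run it first to make each $[X,Y]$ an abelian group with bilinear composition; then construct the biproduct as the cofiber of the zero map $\Omega Y \too{0} X$ (a pushout of the allowed shape, with value equivalent to $X \vee \Sigma\Omega Y \simeq X \vee Y$) and verify, using the group-enriched homs, that it is simultaneously a product and a coproduct in $h\mathcal{C}$. With that step repaired, the rest of your outline goes through: the squares appearing in your (TR2) pasting argument all have zero corners and therefore lie squarely within the scope of the axiom as actually stated.
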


In a stable category, every object $Y$ has an equivalence $Y \to
\Omega \Sigma Y$. However, there is a natural weak equivalence
\begin{align*}
  \Map_{\mathcal{C}}(X, \Omega Z)
  &\simeq \holim \left[\Map_{\mathcal{C}}(X, *) \to
    \Map_{\mathcal{C}}(X,Z) \leftarrow
    \Map_{\mathcal{C}}(X, *)\right]\\
  &\simeq \holim(* \to \Map_{\mathcal{C}}(X,Z) \leftarrow *)\\
  &\simeq \Omega \Map_{\mathcal{C}}(X,Z),
\end{align*}
and hence the mapping spaces
\[
  \Map_{\mathcal{C}}(X,Y) \simeq \Omega^n \Map_{\mathcal{C}}(X,
  \Sigma^n Y)
\]
can be extended to be valued in $\Omega$-spectra. This makes it much
easier to detect equivalences: we only need to check the homotopy
groups of $\Omega^t \Map_{\mathcal{C}}(X,Y)$ at the basepoint.

\begin{defn}
  Suppose that $\mathcal{C}$ is stable and $S$ is a class of maps in
  $\mathcal{C}$. We say that $S$ is \emph{shift-stable} if the image
  $\bar S$ in $h\mathcal{C}$ is closed under suspension and
  desuspension, up to isomorphism.
\end{defn}

\begin{prop}
  Suppose that $\mathcal{C}$ is stable and $S$ is a shift-stable class
  of maps $\{f_\alpha\co A_\alpha \to B_\alpha\}$. Then an object $Y$
  in $\mathcal{C}$ is $S$-local if and only if the homotopy classes of
  maps $[B_\alpha / A_\alpha, X]_{h\mathcal{C}}$ are trivial.
\end{prop}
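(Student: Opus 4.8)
The plan is to translate the $S$-local condition into the vanishing of certain homotopy groups and then to recognize those groups as classes of maps out of the cofibers $B_\alpha/A_\alpha$. Since $\mathcal{C}$ is stable, each $f_\alpha\co A_\alpha \to B_\alpha$ fits into a cofiber sequence $A_\alpha \to B_\alpha \to B_\alpha/A_\alpha$ with $B_\alpha/A_\alpha = \hocolim(* \leftarrow A_\alpha \to B_\alpha)$. Applying $\Map_{\mathcal{C}}(-,Y)$ turns this homotopy pushout into a homotopy pullback, and because $\Map_{\mathcal{C}}(*,Y)$ is contractible we obtain a fibration sequence
\[
  \Map_{\mathcal{C}}(B_\alpha/A_\alpha, Y) \to \Map_{\mathcal{C}}(B_\alpha, Y)
  \too{f_\alpha^*} \Map_{\mathcal{C}}(A_\alpha, Y)
\]
whose fiber over the zero map is the mapping space out of the cofiber. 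The whole argument revolves around comparing this fiber with the two total spaces.

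For the forward implication I would argue directly. If $Y$ is $S$-local then by definition $f_\alpha^*$ is a weak equivalence, so its homotopy fiber is contractible; but that homotopy fiber is exactly $\Map_{\mathcal{C}}(B_\alpha/A_\alpha, Y)$, so $[B_\alpha/A_\alpha, Y]_{h\mathcal{C}} = \pi_0\Map_{\mathcal{C}}(B_\alpha/A_\alpha, Y)$ consists of a single, necessarily trivial, class. This half needs only the weak equivalence of the unstable mapping spaces and does not invoke shift-stability.

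For the converse I would pass to the associated $\Omega$-spectra of the preceding discussion, for which $\pi_n\Map_{\mathcal{C}}(X,Y) \cong [\Sigma^n X, Y]_{h\mathcal{C}}$ for every integer $n$. The fibration sequence above then produces a long exact sequence whose terms are, up to reindexing, the groups $[\Sigma^n B_\alpha, Y]_{h\mathcal{C}}$, $[\Sigma^n A_\alpha, Y]_{h\mathcal{C}}$, and $[\Sigma^n (B_\alpha/A_\alpha), Y]_{h\mathcal{C}}$. The key point is that shift-stability forces every cofiber term to vanish: since suspension commutes with cofibers, $\Sigma^n(B_\alpha/A_\alpha)$ is the cofiber of $\Sigma^n f_\alpha$, and shift-stability identifies $\Sigma^n f_\alpha$ up to isomorphism in $h\mathcal{C}$ with a map $f_\beta$ in $S$, whence $[\Sigma^n(B_\alpha/A_\alpha), Y]_{h\mathcal{C}} \cong [B_\beta/A_\beta, Y]_{h\mathcal{C}} = 0$ by hypothesis. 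With every $[\Sigma^n(B_\alpha/A_\alpha), Y]_{h\mathcal{C}}$ zero, each map $[\Sigma^n B_\alpha, Y]_{h\mathcal{C}} \to [\Sigma^n A_\alpha, Y]_{h\mathcal{C}}$ is sandwiched between vanishing groups and is therefore an isomorphism; in particular $f_\alpha^*$ induces isomorphisms on all homotopy groups at the basepoint, and since the mapping spaces are infinite loop spaces this suffices to conclude that $f_\alpha^*$ is a weak equivalence, i.e. that $Y$ is $S$-local.

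The step I expect to be the genuine obstacle, and the reason the hypothesis asks for shift-stability rather than merely a null cofiber, is the behavior at $\pi_0$. A contractible homotopy fiber over the basepoint yields isomorphisms on $\pi_n$ for $n \geq 1$ and only injectivity on $\pi_0$; there is no $\pi_{-1}$ of a space to control the surjectivity of $[B_\alpha, Y]_{h\mathcal{C}} \to [A_\alpha, Y]_{h\mathcal{C}}$. Passing to the $\Omega$-spectra is precisely what makes this missing obstruction visible as $[\Sigma^{-1}(B_\alpha/A_\alpha), Y]_{h\mathcal{C}}$, and shift-stability is what guarantees that this negative-degree group vanishes as well. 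I would therefore be careful to run the long exact sequence over all integers $n$, including the single negative index that governs surjectivity on components, rather than only over $n \geq 0$.
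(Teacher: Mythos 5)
Your proposal is correct and follows essentially the same route as the paper: both arguments run the fiber sequence $\Map_{\mathcal{C}}(B_\alpha/A_\alpha,Y) \to \Map_{\mathcal{C}}(B_\alpha,Y) \to \Map_{\mathcal{C}}(A_\alpha,Y)$ through the long exact sequence of homotopy groups furnished by the $\Omega$-spectrum enrichment and triangulated structure, using shift-stability to identify the shifted cofiber terms $[\Sigma^t(B_\alpha/A_\alpha),Y]_{h\mathcal{C}}$ with cofibers of other maps in $S$. Your explicit attention to the negative-degree term controlling surjectivity on $\pi_0$ is exactly the point the paper handles implicitly by letting $t$ range over all integers.
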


\begin{proof}
  The individual fiber sequences
  \[
    \Omega^t \Map_{\mathcal{C}}(B_\alpha/A_\alpha, Y) \to 
    \Omega^t \Map_{\mathcal{C}}(B_\alpha, Y) \to 
    \Omega^t \Map_{\mathcal{C}}(A_\alpha, Y),
  \]
  on homotopy classes classes of maps, are part of a long exact
  sequence
  \[
    \dots \to [\Sigma^t B_\alpha / A_\alpha, Y]_{h\mathcal{C}}
    \to \pi_t \Map_{\mathcal{C}}(B_\alpha, Y)
    \to \pi_t \Map_{\mathcal{C}}(A_\alpha, Y)
    \to [\Sigma^{t-1} B_\alpha / A_\alpha, Y]_{h\mathcal{C}}
    \to \dots
  \]
  from the triangulated structure. We get an isomorphism on homotopy
  groups if and only if the terms
  $[\Sigma^t B_\alpha / A_\alpha, Y]_{h\mathcal{C}}$ vanish for all
  values of $t$.
\end{proof}

By contrast with the unstable case where basepoints are a continual
issue, these shift-stable localizations in a stable category are
always nullifications, and they are \emph{equivalent} to
nullifications of the triangulated homotopy category by a class $S$
that is closed under shift operations.

\begin{defn}
  Suppose that $\mathcal{D}$ is a triangulated category. A full
  subcategory $\mathcal{T}$ is called a \emph{thick subcategory} if
  its objects are closed under closed under isomorphism, shifts,
  cofibers, and retracts. If $\mathcal{D}$ has coproducts, a thick
  subcategory $\mathcal{T}$ is \emph{localizing} if it is also closed
  under coproducts.
\end{defn}

\begin{prop}
  Suppose that $\mathcal{D}$ is a triangulated category and that
  $\mathcal{T} \subset \mathcal{D}$ is a thick subcategory. Then there
  exists a triangulated category $\mathcal{D}/\mathcal{T}$ called the
  \emph{Verdier quotient} of $\mathcal{D}$ by $\mathcal{T}$, with a
  functor $\mathcal{D} \to \mathcal{D} / \mathcal{T}$. The Verdier
  quotient is universal among triangulated categories under
  $\mathcal{D}$ such that the objects of $\mathcal{T}$ map to trivial
  objects.
\end{prop}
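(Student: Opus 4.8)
The plan is to realize $\mathcal{D}/\mathcal{T}$ as an ordinary categorical localization and then to transport a triangulated structure onto it. First I would single out the class $S$ of maps $f\co X \to Y$ whose cone lies in $\mathcal{T}$; call these the \emph{$\mathcal{T}$-equivalences}. The guiding observation is that in any triangulated category a map is an isomorphism precisely when its cone is a zero object. Consequently, any exact functor $F\co \mathcal{D} \to \mathcal{E}$ that sends the objects of $\mathcal{T}$ to zero must send every map in $S$ to an isomorphism: applying $F$ to a distinguished triangle $X \to Y \to C \to \Sigma X$ with $C \in \mathcal{T}$ produces a triangle whose third term $F(C)$ is a zero object, forcing $F(f)$ to be an isomorphism. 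This tells me the universal property I want should reduce to the universal property of the categorical localization, so I would define $\mathcal{D}/\mathcal{T} := \mathcal{D}[S^{-1}]$ with $Q\co \mathcal{D} \to \mathcal{D}/\mathcal{T}$ the localization functor, and then spend the work upgrading this to a statement about \emph{triangulated} categories and exact functors.

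The heart of the argument, and the step I expect to be the main obstacle, is to show that $S$ admits a \emph{calculus of fractions}. Closure of $S$ under composition and the presence of identities are immediate from thickness of $\mathcal{T}$. The substantive content is the Ore conditions: that any cospan one of whose legs lies in $S$ can be completed to a homotopy commuting square whose opposite legs again lie in $S$, together with a weak cancellation property. Both of these I would extract from the octahedral axiom, which is exactly the tool that produces the completing maps while keeping track of the cones (so that the newly produced maps land in $S$). Granting the calculus of fractions, morphisms $X \to Y$ in $\mathcal{D}/\mathcal{T}$ acquire the concrete \emph{roof} description as equivalence classes of diagrams $X \xleftarrow{s} X' \too{f} Y$ with $s \in S$, composed via the Ore condition. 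This description is what makes the localization visibly additive; it also carries the set-theoretic subtlety, since one must check that these equivalence classes form a genuine set. That is the same local-smallness issue flagged earlier in the paper, and I would treat it with the same light touch, noting the hypothesis under which it is automatic.

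Next I would install the triangulated structure. Because $\mathcal{T}$ is closed under shifts, $S$ is shift-stable, so the relation $Q\Sigma \cong \Sigma Q$ forces a suspension functor on the quotient. I would declare a triangle in $\mathcal{D}/\mathcal{T}$ \emph{distinguished} when it is isomorphic, in $\mathcal{D}/\mathcal{T}$, to the image under $Q$ of a distinguished triangle of $\mathcal{D}$, and then check the triangulated axioms: rotation and the completion of a map to a triangle are inherited directly, while the octahedral axiom in the quotient is proved by lifting the relevant roofs back to $\mathcal{D}$ and invoking the octahedral axiom there. By construction $Q$ is exact, and it kills $\mathcal{T}$: for $T \in \mathcal{T}$ the map $0 \to T$ has cone $T \in \mathcal{T}$, hence lies in $S$ and becomes an isomorphism under $Q$, so $Q(T) \cong 0$.

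Finally I would establish the universal property. Given an exact functor $F\co \mathcal{D} \to \mathcal{E}$ with $F(\mathcal{T}) = 0$, the opening observation shows $F$ inverts $S$, so the universal property of the plain localization $\mathcal{D}[S^{-1}]$ yields a unique functor $\bar F$ with $\bar F Q = F$. It then remains only to verify that $\bar F$ is exact: every distinguished triangle of $\mathcal{D}/\mathcal{T}$ is, by definition, the image of a distinguished triangle of $\mathcal{D}$, and since $F$ is exact, $\bar F$ carries it to a distinguished triangle; the required uniqueness of $\bar F$ as an exact functor is inherited from the uniqueness already supplied by the localization. This exhibits $\mathcal{D} \to \mathcal{D}/\mathcal{T}$ as initial among exact functors out of $\mathcal{D}$ that annihilate $\mathcal{T}$, which is the asserted universal property.
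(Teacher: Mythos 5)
The paper states this classical result of Verdier without proof, so there is no in-paper argument to compare against; your proposal is the standard construction---invert the class $S$ of maps whose cone lies in $\mathcal{T}$, verify a calculus of fractions using the triangulated axioms, define distinguished triangles in $\mathcal{D}[S^{-1}]$ as those isomorphic to images of distinguished triangles, and deduce the universal property from that of the plain categorical localization---and it is correct in outline and in all essential steps. Two points deserve slightly more care than you give them: closure of $S$ under composition is not ``immediate from thickness'' but requires the octahedral axiom, which exhibits $\mathrm{cone}(gf)$ in a distinguished triangle with $\mathrm{cone}(f)$ and $\mathrm{cone}(g)$, after which thickness (closure under shifts and cofibers, hence under extensions) puts $\mathrm{cone}(gf)$ in $\mathcal{T}$; and the set-theoretic issue you flag is not resolved by a simple automatic hypothesis---local smallness of $\mathcal{D}[S^{-1}]$ can genuinely fail, which is why the literature either passes to a larger universe or imposes generation conditions on $\mathcal{T}$, consistent with the paper's own conventions on locally small categories.
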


This universal characterization allows us to strongly relate Bousfield
localization of stable categories to localization of the homotopy
category.

\begin{prop}
  Suppose that $\mathcal{C}$ is stable, and that $S$ is a shift-stable
  collection of maps in $\mathcal{C}$.
  \begin{enumerate}
  \item An object in $\mathcal{C}$ is $S$-local if and only if its
    image in the homotopy category $h\mathcal{C}$ is $S$-local.
  \item A map in $\mathcal{C}$ is an $S$-equivalence if and only if
    its image in the homotopy category is an $S$-equivalence.
  \item The subcategories $L^S \mathcal{C}$ of $S$-local objects and
    $\mathcal{T}$ of $S$-trivial objects are thick subcategories of
    $\mathcal{C}$.
  \item The subcategory $\mathcal{T}$ of $S$-trivial objects is closed
    under all coproducts that exist in $\mathcal{C}$. If $\mathcal{C}$
    has small coproducts then it is a localizing subcategory.
  \item If all objects in $\mathcal{C}$ have $S$-localizations, then
    the left adjoint to the inclusion $ hL^S \mathcal{C} \to
    h\mathcal{C}$ has a factorization 
    \[
      h\mathcal{C} \to h\mathcal{C} / h\mathcal{T} \to h L^S \mathcal{C}.
    \]
    The latter functor is an equivalence of categories.
  \end{enumerate}
\end{prop}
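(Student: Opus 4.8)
The plan is to reduce everything to the triangulated homotopy category by exploiting the two structural facts recorded above for stable $\mathcal{C}$: mapping spaces deloop, so that $\pi_t \Map_{\mathcal{C}}(A,Y) \cong [\Sigma^t A, Y]_{h\mathcal{C}}$, and, for shift-stable $S$, $S$-locality of $Y$ is detected by the vanishing of the graded groups $[\Sigma^t B_\alpha/A_\alpha, Y]_{h\mathcal{C}}$.

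For (1), one direction is the earlier proposition that an $S$-local object of $\mathcal{C}$ has $\bar S$-local image. Conversely, a map $\Map_{\mathcal{C}}(B,Y) \to \Map_{\mathcal{C}}(A,Y)$ of $\Omega$-spectra is a weak equivalence if and only if it is an isomorphism on all homotopy groups, that is, if and only if $[\Sigma^t B, Y]_{h\mathcal{C}} \to [\Sigma^t A, Y]_{h\mathcal{C}}$ is an isomorphism for all $t$; since $\bar S$ is closed under shifts, this is precisely the statement that the image of $Y$ is $\bar S$-local. For (2), once (1) identifies the $S$-local objects of $\mathcal{C}$ with the $\bar S$-local objects of $h\mathcal{C}$, the same delooping translates an $S$-equivalence in $\mathcal{C}$ into an $\bar S$-equivalence in $h\mathcal{C}$; this is exactly where the caveat of the earlier remark disappears, since the two supplies of local objects now coincide.

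For (3) and (4) the idea is to present both subcategories as orthogonality classes in the triangulated category $h\mathcal{C}$. Let $\mathcal{G}$ be the shift-closed class of cofibers $B_\alpha/A_\alpha$. Then the $S$-local objects are the $Y$ with $[G,Y]_{h\mathcal{C}} = 0$ for all $G \in \mathcal{G}$, while the $S$-trivial objects $\mathcal{T}$ are the $Z$ with $[\Sigma^t Z, Y]_{h\mathcal{C}} = 0$ for all $t$ and all local $Y$. Closure under isomorphism and shifts is immediate from shift-closedness; closure under cofibers follows by applying $[G,-]_{h\mathcal{C}}$ (resp. $[-,Y]_{h\mathcal{C}}$) to a triangle and using the long exact sequence together with the vanishing of the two neighbouring terms; closure under retracts holds because a retract of a zero group is zero. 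For (4), a coproduct in the source converts homs into a product, $[\coprod_i Z_i, Y]_{h\mathcal{C}} \cong \prod_i [Z_i, Y]_{h\mathcal{C}}$, so any coproduct of $S$-trivial objects that exists is again $S$-trivial, and with small coproducts this promotes the thick subcategory $\mathcal{T}$ to a localizing one.

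The substance is in (5), and the main obstacle is controlling the morphisms of the Verdier quotient. I would first check that the reflector $L$ (the given left adjoint to $hL^S\mathcal{C} \to h\mathcal{C}$) is exact—shift-stability of $S$ makes $L$ commute with $\Sigma$, and $L^S\mathcal{C}$ is closed under cofibers, so $L$ carries triangles to triangles—and that it annihilates $h\mathcal{T}$: for $S$-trivial $Z$ we have $[LZ, Y]_{h\mathcal{C}} \cong [Z,Y]_{h\mathcal{C}} = 0$ for every local $Y$, and taking $Y = LZ$ forces $LZ \cong 0$. Since $h\mathcal{T}$ is thick by (3), the universal property of the Verdier quotient factors $L$ as $h\mathcal{C} \too{\pi} h\mathcal{C}/h\mathcal{T} \too{\bar L} hL^S\mathcal{C}$. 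Rather than compute roofs directly, I would exhibit $\pi \circ \iota$, with $\iota$ the inclusion of local objects, as a two-sided inverse to $\bar L$: on one hand $\bar L \circ \pi\iota = L\iota \cong \mathrm{id}$ because $L$ restricts to the identity on local objects; on the other hand the localization unit $\eta_X \co X \to LX$ is an $S$-equivalence, hence has cofiber in $\mathcal{T}$, so $\pi(\eta_X)$ is an isomorphism and $\pi\iota \circ \bar L \circ \pi = \pi\iota L \cong \pi$, which gives $\pi\iota \circ \bar L \cong \mathrm{id}$ since $\pi$ is surjective on objects. The hard part is precisely this verification that no morphisms are lost or gained in passing to the quotient; packaging it through the inverse functor $\pi\iota$ sidesteps an explicit calculus-of-fractions argument and isolates the one genuinely stable input, namely that the $S$-equivalences are exactly the maps with cofiber in $\mathcal{T}$.
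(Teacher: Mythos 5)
Your proof is correct, and in fact the paper states this proposition \emph{without} proof: your argument assembles precisely the ingredients the paper sets up for it---the $\Omega$-spectrum enrichment of mapping spaces, the criterion that $Y$ is $S$-local iff $[\Sigma^t B_\alpha/A_\alpha, Y]_{h\mathcal{C}} = 0$ for all $t$, thickness via long exact sequences, and the universal property of the Verdier quotient---so it is evidently the intended argument. The only two spots you compress are harmless but worth noting: exactness of $L$ really requires the cofiber comparison (given a triangle $X \to Y \to Z$, the cofiber of $LX \to LY$ receives a map from $Z$ whose cofiber lies in $\mathcal{T}$ by an octahedral-type argument), and the promotion of $\pi\iota L \cong \pi$ to $\pi\iota \circ \bar{L} \cong \mathrm{id}$ needs slightly more than surjectivity of $\pi$ on objects---namely that every morphism of $h\mathcal{C}/h\mathcal{T}$ is a roof built from $\pi$-images and formal inverses of maps with cone in $\mathcal{T}$, so that a natural isomorphism defined on $\pi$-images (here the components $\pi(\eta_X)$, which are invertible) automatically extends to the quotient---and both reductions rest exactly on the stable fact you correctly isolate as the crux, that the $S$-equivalences are precisely the maps whose cofiber lies in $\mathcal{T}$.
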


\begin{rmk}
  The fact that Bousfield localization of $\mathcal{C}$ is determined
  by a construction purely in terms of $h\mathcal{C}$ is special to
  the stable setting.
\end{rmk}

\begin{rmk}
  This relates Verdier quotients in a stable category to Bousfield
  localization, but only quotients by a \emph{localizing}
  subcategory. For a homotopical interpretation of more general
  Verdier quotients, see \cite[\S I.3]{scholze-nikolaus-tc}.
\end{rmk}

% This says nothing.
% 
% \begin{prop}
%   Suppose that $S$ is a class obtained by nullifying a collection of
%   objects $\{W_\alpha\}$ that is stable under shift, and that we have
%   a homotopy fiber sequence $X \to Y \to Z$ in category
%   $\mathcal{C}$. Then the following are equivalent.
%   \begin{enumerate}
%   \item The map $Y \to Z$ is an $S$-localization.
%   \item $Z$ is $S$-trivial and $Y \to Z$ is an $S$-equivalence.
%   \end{enumerate}
% \end{prop}

% \begin{proof}
%   From the long exact sequence
%   \[
%     \dots \to [\Sigma W_\alpha, Z] \to [W_\alpha, X] \to [W_\alpha,Y]
%     \to [W_\alpha,Z] \to \dots
%   \]
%   we find that the map is a localization if and only if $[W_\alpha,X]$
%   is trivial for all $\alpha$.
% \end{proof}

\begin{exam}
  Let $S$ be the collection of multiplication-by-$m$
  maps $S^n \to S^n$ for $n \in \mb Z$, $m > 0$. A spectrum $Y$
  is $S$-local if and only if multiplication by $m$ is an isomorphism
  on the homotopy groups $\pi_* Y$ for all positive $m$, or
  equivalently if the maps $\pi_* Y \to \mb Q \otimes \pi_*Y$ are
  isomorphisms. Such spectra are called \emph{rational}.

  If $Y$ is such a spectrum, we can calculate that the natural map
  \[
    [X,Y] \to \prod_n \Hom(\pi_n X, \pi_n Y)
  \]
  is an isomorphism for any spectrum $X$: because $\pi_n Y$ is a
  graded vector space, $\Hom(-, \pi_n Y)$ is exact and so both sides
  are cohomology theories in $X$ that satisfy the wedge axiom and
  agree on spheres. Therefore, $A \to B$ is an $S$-equivalence if and
  only if $\mb Q \otimes \pi_n(A) \to \mb Q \otimes \pi_n(B)$ is an
  isomorphism for all $n$, and such maps are called \emph{rational
    equivalences}. In this case, this is the same as the map
  $H_*(A;\mb Q) \to H_*(B;\mb Q)$ being an isomorphism.

  This analysis allows us to conclude that $X \to H\mb Q \wedge X =
  X_{\mb Q}$ is a rationalization for all $X$.
\end{exam}

\begin{exam}
  In the above, we can make $S$ smaller. If $S$ is the set of
  multiplication-by-$p$ maps $S^n \to S^n$, we similarly find that
  $S$-local spectra are those whose homotopy groups are
  $\mb Z[1/p]$-modules, and that equivalences are those maps which
  induce isomorphisms on homotopy groups after inverting $p$. The
  localization of $\mb S$ is the homotopy colimit
  \[
    \mb S[1/p] = \hocolim (\mb S \too{p} \mb S \too{p} \mb S \too{p} \dots),
  \]
  which is also a Moore spectrum for $\mb Z[1/p]$. We similarly find
  that $X \to \mb S[1/p] \wedge X$ is an $S$-localization for all $X$.

  We could also let $S$ be the set of multiplication-by-$m$ maps for
  $m$ relatively prime to $p$, which replaces the ring $\mb Z[1/p]$
  with the local ring $\mb Z_{(p)}$ in the above.
\end{exam}

\begin{exam}
  Fix a commutative ring $R$ and a multiplicatively closed subset
  $W \subset R$, recalling that localization with respect to $W$ is
  exact. If we define $S$ to be the set of maps of the form
  $R[n] \too{w} R[n]$ for $w \in W$, then a complex $C$ of $R$-modules
  is $S$-local if and only if the multiplication-by-$w$ maps
  $H_*(C) \to H_*(C)$ are isomorphisms, or equivalently if and only if
  $H_*(C) \to W^{-1} H_*(C) \cong H_*(W^{-1} C)$ is an isomorphism. A
  map $A \to B$ of complexes is an $S$-equivalence if and only if the
  map $W^{-1} A \to W^{-1} B$ is an equivalence.

  The natural map $C \to W^{-1} C \cong W^{-1} R \otimes_R C$ is an
  $S$-localization.
\end{exam}

These examples have such nice properties that it is convenient to
axiomatize them.

\begin{defn}
  A stable Bousfield localization on spectra\footnote{This definition
    extends if we have a stable category $\mathcal{C}$ with a
    symmetric monoidal structure appropriately compatible with the
    stable structure.} is a \emph{smashing localization} if either of
  the following equivalent conditions hold.
  \begin{enumerate}
  \item There is a map of spectra $\mb S \to L\mb S$ such that, for
    any $X$, the map $X \to L\mb S \wedge X$ is a localization.
  \item Local objects are closed under arbitrary homotopy colimits.
  \end{enumerate}
\end{defn}

The equivalence between these two characterizations is not immediately
obvious. The first implies the second, because
\[
  L\mb S \wedge \hocolim X_i \to \hocolim(L\mb S \wedge X_i)
\]
is always an equivalence and the former is always local. The converse
follows because the only homotopy-colimit preserving functors on
spectra are all equivalent to functors of the form
$X \mapsto A \wedge X$ for some $A$, and the resulting localization
map $\mb S \to A$ is of the desired form.

\begin{exam}
  \label{exam:p-completion}
  A spectrum $Y$ is local for the maps $\mb S[1/p] \wedge S^n \to *$ if
  and only if the homotopy limit
  \[
    \holim (\dots \to Y \too{p} Y \too{p} Y) \simeq F(\mb S[1/p],Y)
  \]
  of function spectra is weakly contractible. However, taking homotopy
  limits of the natural fiber sequences
  \[
    \xymatrix{
      \dots \ar[r] & Y \ar[r]^p \ar[d]_{p^2} & Y \ar[r]^p \ar[d]_p & Y \ar[d]_1 \\
      \dots \ar[r] & Y \ar[r]^1 \ar[d] & Y \ar[r]^1 \ar[d] & Y \ar[d] \\
      \dots \ar[r] & Y /p^2 \ar[r] & Y / p \ar[r]  & \ast
    }
  \]
  shows that $Y$ is local if and only if the map
  $Y \to Y^\wedge_p = \holim Y/p^k$ is an equivalence. Therefore, we
  refer to a spectrum local for these maps as \emph{$p$-complete}; a
  Bousfield localization of $Y$ will be called the
  \emph{$p$-completion}; a trivial object is called \emph{$p$-adically
    trivial}; an equivalence is called a \emph{$p$-adic
    equivalence}. The above presents $Y^\wedge_p$ as a candidate for
  the $p$-completion of $Y$.

  If we construct the fiber sequence
  \[
    \Sigma^{-1} \mb S/p^\infty \to \mb S \to \mb S[1/p],
  \]
  we find that we can identify $Y^\wedge_p$ with the function spectrum
  $F(\Sigma^{-1} \mb S/p^\infty, Y)$. Moreover, the map
  $Y^\wedge_p \to (Y^\wedge_p)^\wedge_p$ is always an
  equivalence. Therefore, $Y^\wedge_p$ is always $p$-complete.

  If multiplication-by-$p$ is an equivalence on $Z$, then $Z \simeq Z
  \wedge \mb S[1/p]$, and so maps $Z \to Y$ are equivalent to maps $Z
  \to F(\mb S[1/p],Y)$. For any $Y$ which is $p$-adically complete,
  this is trivial, so such objects $Z$ are $p$-adically trivial. In
  particular, the fiber of $Y \to Y^\wedge_p$ is always trivial and so
  $Y \to Y^\wedge_p$ is a $p$-adic equivalence. Therefore, this is a
  $p$-adic completion.

  If each homotopy group of $Y$ has a bound on the order of $p$-power
  torsion, we can further identify the homotopy groups of $Y^\wedge_p$
  as the ordinary $p$-adic completions of the homotopy groups of $Y$;
  if the homotopy groups of $Y$ are finitely generated, then
  $\pi_*(Y^\wedge_p) \to \pi_*(Y) \otimes \mb Z_p$.\footnote{In
    general, the homotopy groups of the $p$-adic completion are
    somewhat sensitive and one needs to be careful about derived
    functors of completion.}
\end{exam}

\begin{rmk}
  Note that the previous example is not a smashing localization. For
  any connective spectrum $X$, the map $\mb S^\wedge_p \wedge X \to
  X^\wedge_p$ induces the map $\pi_* (X) \otimes \mb Z_p \to
  \pi_*(X)^\wedge_p$ on homotopy groups; this is typically only an
  isomorphism if the homotopy groups $\pi_*(X)$ are finitely generated.
\end{rmk}

\begin{exam}
  \label{exam:derived-complete}
  For an element $x$ in a commutative ring $R$, let $K_x$ be the complex
  \[
    \dots \to 0 \to R \to x^{-1} R \to 0 \to \dots
  \]
  concentrated in degrees $0$ and $-1$, with a map $K_X \to R$. For a
  sequence of elements $(x_1,\dots,x_n)$, let
  $K_{(x_1,\dots,x_n)} = \bigotimes_R K_{x_i}$ be the \emph{stable Koszul
    complex}. If $y$ is in the ideal generated by $(x_1,\dots,x_n)$,
  then the inclusion $K_{(x_1,\dots,x_n)} \to K_{(x_1,\dots,x_n,y)}$
  is a quasi-isomorphism, and so up to quasi-isomorphism the Koszul
  complex only depends on the ideal. Let $K_I$ be a cofibrant
  replacement.

  We say that a complex $C$ is \emph{$I$-complete} if and only if it
  is local with respect to the shifts of the map $K_I \to R$. This is
  true if and only if the homology groups of $C$ are $I$-complete in
  the derived sense. If $R$ is Noetherian and the homology groups of
  $C$ are finitely generated, this is true if and only if the homology
  groups of $C$ are $I$-adically complete in the ordinary sense.
\end{exam}

These frameworks for the study of localization and completion, and many
generalizations of it, were developed by Greenlees and May
\cite{greenlees-may}.

\begin{exam}
  Fix a ring $R$, and let $\mathcal{C}$ be the category of unbounded
  complexes of finitely generated projective left $R$-modules that
  only have nonzero homology groups in finitely many degrees. Consider
  the set $S$ of maps $R[n] \to 0$. An object $C$ is $S$-local if and
  only if its homology groups are trivial.

  We can inductively take mapping cones of maps $R[n] \to C$ to
  construct a localization $C \to LC$, embedding $C$ into an unbounded
  complex of finitely generated projective modules with trivial
  homology groups. Therefore, localizations exist in this category.

  For two such complexes $C$ and $D$ with trivial homology, we have
  \[
    \Hom_{h\mathcal{C}}(C,D) \cong \lim_n \Hom_R(Z_{n} C, Z_{n} D) / \Hom_R(Z_{n}
    C, D_{n+1})
  \]
  where $D_{n+1} \to Z_{n}(D)$ is the boundary map---a surjective map
  from a projective module.

  This can be interpreted in terms of the stable module category of
  $R$. Defining $W_n(C) = Z_{-n}(C)$, the short exact sequences
  $0 \to Z_{-n}(C) \to C_{-n} \to Z_{-n-1}(C) \to 0$ determine
  isomorphisms $W_n(C) \cong \Omega W_{n+1}(C)$ in the stable module
  category, assembling the $W_n$ into an ``$\Omega$-spectrum''. Maps
  $C \to D$ are then equivalent to maps of $\Omega$-spectra in the
  stable module category.\footnote{In certain cases, such as for
    Frobenius algebras, $\Omega$ is an autoequivalence. This
    definition then simply recovers the stable module category of $R$
    by itself. If $R$ has finite projective dimension,
    $\Omega$-spectrum objects are necessarily trivial.}
\end{exam}

\section{Homology localizations}

\subsection{Homology localization of spaces}
\begin{defn}
  Suppose $E_*$ is a homology theory on spaces. Then we say that a map
  $f\co A \to B$ of spaces is an \emph{$E_*$-equivalence} if it
  induces an isomorphism $f_*\co E_* A \to E_* B$. A space is
  \emph{$E_*$-local} if it is local with respect to the class of
  $E_*$-equivalences.
\end{defn}

\begin{exam}
  Suppose that $E_*$ is integral homology $H_*$. Any Eilenberg--Mac
  Lane space $K(A,n)$ is $H_*$-local by the universal coefficient
  theorem for cohomology. Moreover, any simply-connected space $X$ is
  the homotopy limit of a Postnikov tower built from fibration
  sequences $P_n X \to P_{n-1} X \to K(\pi_n X, n+1)$. Since local
  objects are closed under homotopy limits, we find that
  simply-connected spaces are $H_*$-local.\footnote{This argument can
    be refined to show that \emph{nilpotent spaces} (where $\pi_1(X)$
    is nilpotent, and acts nilpotently on the higher homotopy groups)
    are $H_*$-local.}
\end{exam}

\begin{rmk}
  This example illustrates a very different approach to the
  construction of localizations. Because homology isomorphisms are
  detected by the $K(A,n)$, these spaces are automatically local;
  therefore, any object built from these using homotopy limits is
  automatically local. Such objects are often called
  \emph{nilpotent}. Thus gives us a dual approach to building the
  Bousfield localization of $X$: construct a natural diagram of
  nilpotent objects that receive maps from $X$, and try to verify that
  the homotopy limit is a localization of $X$.
\end{rmk}

\begin{exam}
  Serre's rational Hurewicz theorem implies that a map of
  simply-connected spaces is an isomorphism on rational homology
  groups if and only if it is an isomorphism on rational homotopy
  groups. A simply-connected space is local for rational
  homology if and only if it its homotopy groups are rational vector
  spaces.

  The same is not true for general spaces. The map $\mb{RP}^2 \to *$
  is a rational homology isomorphism, and the covering map
  $S^2 \to \mb{RP}^2$ is an isomorphism on rational homotopy groups,
  but the composite $S^2 \to *$ is neither. The problem here is the
  failure of a simple Postnikov tower for $\mb{RP}^2$ due to the
  action of $\pi_1$ on the higher homotopy groups.
\end{exam}

\begin{exam}
  If $X$ is a connected space with perfect fundamental group, then
  Quillen's plus-construction gives a map $X \to X^+$ that
  induces an $H_*$-isomorphism such that $X^+$ is
  simply-connected. This makes $X^+$ into an $H_*$-localization of
  $X$.
\end{exam}

Classically, Quillen's plus-construction can be applied to groups with
a perfect subgroup. In order to properly identify the universal
property, we need to work in a relative situation.

\begin{exam}
  Fix a group $G$, and let $\mathcal{C}$ be the category of spaces
  over $BG$. Given an abelian group $A$ with $G$-action, there is an
  associated local coefficient system $\underline A$ on $BG$, and so
  given any object $X \to BG$ of $\mathcal{C}$ we can define the
  homology groups $H_*(X; \underline A)$. We say that a map $X \to Y$
  over $BG$ is a relative homology equivalence if it induces
  isomorphisms on homology with coefficients in any $\underline
  A$. Taking $A$ to be the group algebra $\mb Z[G]$, we find that this
  is equivalent to the map of homotopy fibers $F_X \to F_Y$ being a
  homology isomorphism, so this is the same as a \emph{fiberwise
    $H_*$-equivalence}. If an object $Y$ over $BG$ has
  simply-connected homotopy fiber it is automatically
  local.

  Suppose that $X$ is any connected space such that $\pi_1(X)$ contains
  a perfect normal subgroup $P$ with quotient group $G$. The
  homomorphism $\pi_1(X) \to G$ lifts to a map $X \to BG$. The
  plus-construction with respect to $P$ is a fiber homology
  equivalence $X \to X^+$ where $X^+ \to BG$ has simply-connected
  homotopy fiber, and thus is a localization in $\mathcal{C}$.
\end{exam}

Localization with respect to homology is very difficult to analyze in
the case when a space is not simply-connected, especially if the space
is not \emph{simple} (either the fundamental group is not nilpotent or
it does not act nilpotently on the higher homotopy groups). Many
natural spaces are not local. Here are some basic tools to prove this.

\begin{lem}
  Suppose that $F_n$ is a free group on $n$ generators and
  $\alpha\co F_n \to F_n$ is a homomorphism, with induced map
  $\alpha_{ab}\co \mb Z^n \to \mb Z^n$. Under the identification
  $\Hom(F_n, G) \cong G^n$ for any group $G$, write $\alpha^*$ for the
  natural map of sets $G^n \to G^n$.

  Suppose the map $\alpha_{ab}$ becomes an isomorphism after tensoring
  with a ring $R$. Then, for any space $X$, a necessary condition for
  $X$ to be $H_*(-;R)$-local is that $\alpha^* \co \pi_1(X,x)^n \to
  \pi_1(X,x)^n$ must be a bijection at any basepoint.
\end{lem}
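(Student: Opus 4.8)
The plan is to realize the algebraic map $\alpha$ geometrically and then feed it into the locality hypothesis. Write $W = \bigvee_n S^1$, a well-pointed space which is a $K(F_n,1)$, with homology $H_0(W) = \mb Z$, $H_1(W) = \mb Z^n$ (the abelianization of $F_n$), and $H_i(W) = 0$ for $i \geq 2$ since $W$ is one-dimensional. First I would choose a based cellular self-map $a\co W \to W$ realizing $\alpha$, sending the $i$-th circle along the word $\alpha(x_i)$ in the generators; by construction $\pi_1(a) = \alpha$, and by naturality of the Hurewicz map the induced map $H_1(a)$ is the abelianization $\alpha_{ab}$.

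Next I would observe that $a$ is an $H_*(-;R)$-equivalence. Indeed, the only nonvanishing reduced homology of $W$ with $R$-coefficients sits in degree $1$, where $H_1(a;R) = \alpha_{ab} \otimes R$, an isomorphism by hypothesis; degree $0$ is handled by connectedness and degrees $\geq 2$ vanish. Hence $a$ lies in the class of $R$-homology equivalences, and since $X$ is local with respect to this entire class, $X$ is in particular $a$-local in the category of unbased spaces.

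Now I would transport this to based mapping spaces. Because $W$ is well-pointed and $a$ is a based map, the proposition relating unbased locality to based mapping spaces shows that for every basepoint $x \in X$ the map
\[
  a^*\co \Map_*(W, X) \to \Map_*(W, X)
\]
is a weak equivalence. Using the wedge decomposition $\Map_*(\bigvee_n S^1, X) \simeq \prod_n \Omega X$, taking $\pi_0$ identifies $\pi_0 \Map_*(W,X)$ with $\pi_1(X,x)^n = \Hom(F_n, \pi_1(X,x))$, and under this identification $\pi_0(a^*)$ is precisely the precomposition map $\alpha^*$. A weak equivalence is in particular a bijection on $\pi_0$, so $\alpha^*\co \pi_1(X,x)^n \to \pi_1(X,x)^n$ is a bijection, as claimed.

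The main obstacle is not a hard estimate but rather the correct translation: recognizing that $\alpha$ must be realized as a self-map of the wedge of circles, so that the hypothesis ``$\alpha_{ab}\otimes R$ is an isomorphism'' becomes exactly the statement that this self-map is an $R$-homology equivalence. Once that dictionary is in place, the locality hypothesis and the based-mapping-space proposition do all the work; the only points requiring care are the naturality argument giving $H_1(a) = \alpha_{ab}$ and the verification that $\pi_0(a^*)$ matches the set map $\alpha^*$ across the identification $\pi_0\Map_*(W,X) \cong \pi_1(X,x)^n$.
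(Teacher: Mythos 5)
Your proof is correct and follows essentially the same route as the paper: realize $\alpha$ as a self-map of the wedge of circles (a $K(F_n,1)$), note that the hypothesis on $\alpha_{ab}\otimes R$ makes this map an $H_*(-;R)$-equivalence since the reduced homology is concentrated in degree $1$, invoke the based-mapping-space characterization of locality for well-pointed spaces, and read off $\alpha^*$ on $\pi_0$ of $\Map_*(\bigvee_n S^1, X) \simeq (\Omega X)^n$. Your write-up is if anything slightly more careful than the paper's, spelling out the Hurewicz naturality and the $\pi_0$ identification explicitly.
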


\begin{proof}
  The map $\alpha_{ab}$, after tensoring with $R$, can be identified
  with the map $H_1(F_n;R) \to H_1(F_n;R)$ on homology induced by
  $\alpha$. If $\alpha_{ab}$ becomes an isomorphism after tensoring
  with $R$, then $\alpha\co K(F_n, 1) \to K(F_n,1)$ is
  an $H_*(-;R)$-equivalence.

  For a space $X$ to be $H_*(-;R)$-local, the induced map
  \[
    \Map_*(K(F_n,1),X) \to \Map_*(K(F_n,1), X)
  \]
  must be a weak equivalence. Taking a wedge of circles as our model,
  we find that the induced map
  \[
    (\Omega X)^n \to (\Omega X)^n
  \]
  must be a weak equivalence. On $\pi_0$, this is the map $\alpha^*$
  on $\pi_1(X)^n$.
\end{proof}

\begin{exam}
  For $n \neq 0$, the multiplication-by-$n$ map $\mb Z \to \mb Z$
  is a rational isomorphism. Therefore, for $X$ to be rationally
  local, the $n$'th power map $\pi_1(X) \to \pi_1(X)$ should be a
  bijection: every element $g \in \pi_1(X)$ has a unique $n$'th root
  $g^{1/n}$. Such groups are called uniquely divisible, or sometimes
  $\mb Q$-groups. The structure of free $\mb Q$-groups was studied in
  \cite{baumslag-roots}.
\end{exam}

\begin{exam}
  Let $F_2$ be free on the generators $x$ and $y$, and define $\alpha
  \co F_2 \to F_2$ by
  \begin{align*}
    \alpha(x) &= x^{-9} y^{-20} (y^2 x)^{10}\\
    \alpha(y) &= x^{-9} y^{10} (y x^{-1})^{-9}.
  \end{align*}
  The map $\alpha_{ab}$ is the identity map. Therefore, for a space
  with fundamental group $G$ to be local with respect to integral
  homology, any pair of elements $(z,w) \in G$ has to be uniquely of the
  form $(z,w) = (x^{-9} y^{-20} (y^2 x)^{10}, x^{-9} y^{-10} (y x^{-1})^{-9})$
  for some $x$ and $y$ in $G$. Most groups do not satisfy this
  property.

  We can use this to show that any space whose fundamental group $G$
  has a surjective homomorphism $\phi\co G \to A_5$ cannot be local
  with respect to integral homology---in particular, this applies to a
  free group $F_2$. Choose elements $x$ and $y$ in $G$ with
  $\phi(x) = (1 2 3)$ and $\phi(y) = (1 2 3 4 5)$. Then
  $\phi(y^2 x) = (1 4)(2 5)$ and $\phi(y x^{-1}) = (1 4 5)$, and
  $\phi \circ \alpha$ is the trivial homomorphism while $\phi$ is
  surjective.\footnote{In order to use this \emph{particular}
    technique to show that $\phi$ was not a bijection, we needed to
    have a homomorphism $\phi$ whose image was a perfect group---the
    image of $\alpha_{ab}$ is contained in the kernel of
    $\phi_{ab}$. This particular map $\alpha$ is complicated because
    it was reverse-engineered from $\phi$.}

  Several other, more easily defined, maps $\alpha$ can be shown to
  not be bijective. For example, the map
  $(x,y) \mapsto (x [x,y], y [x,y])$ can be shown to not be a
  bijection, e.g. by using Fox's free differential calculus
  \cite{fox-freecalculus1}.
\end{exam}

\begin{lem}
  Let $G$ be a group, $R$ a ring, and $\beta \in \mb Z[G]$ an element
  such that the composite ring homomorphism $\mb Z[G] \too{\epsilon}
  \mb Z \to R$ sends $\beta$ to zero.

  Then, for any based space $X$ with fundamental group $G$, a
  necessary condition for $X$ to be $H_*(-;R)$-local is that
  $\pi_k(X)$ must be complete in the topology defined by
  $\beta$.\footnote{This refers to being \emph{derived} complete in
    the sense of Example~\ref{exam:derived-complete}.}
\end{lem}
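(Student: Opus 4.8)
The plan is to mirror the strategy of the preceding lemma—manufacture a self-map that is an $H_*(-;R)$-equivalence and then exploit the locality of $X$—but arranged so that the operator detected is $\beta$. For $k \geq 2$ the group $\pi_k(X)$ is a module over $\mb Z[G] = \mb Z[\pi_1 X]$, on which $\beta$ acts as an endomorphism, and being complete in the $\beta$-topology means, in the sense of Example~\ref{exam:derived-complete}, that $\pi_k(X)$ is derived $\beta$-complete. Concretely this asserts that the homotopy limit of the tower $\cdots \xrightarrow{\beta} \pi_k(X) \xrightarrow{\beta} \pi_k(X)$ is trivial, i.e. that both $\lim$ and $\lim^1$ of this tower of $\beta$-multiplications vanish. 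I would therefore reduce the lemma to producing an $H_*(-;R)$-equivalence whose associated mapping-space tower into $X$ is exactly this tower.

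First I would build the source. Let $Y$ be a $2$-dimensional presentation complex for $G$, so that $\pi_1 Y = G$ and—since $Y$ has no cells above dimension $2$—a based map $u\co Y \to X$ inducing the identity on $\pi_1$ exists with no higher obstruction. Attach a single free $\mb Z[G]$-orbit of $2$-cells along null maps to form $W$, so that $\widetilde W \simeq \widetilde Y \vee \bigvee_G S^2$ and the relative homology $\widetilde H_2(\widetilde W,\widetilde Y) \cong \mb Z[G]$ is free of rank one. Realizing multiplication by $\beta$ on this free module by a based self-map, I obtain $\phi_\beta\co W \to W$ that fixes $Y$ and acts by $\cdot\beta$ on the attached cells, and I set $W_\infty = \hocolim(W \xrightarrow{\phi_\beta} W \xrightarrow{\phi_\beta} \cdots)$.

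The crux is that $Y \hookrightarrow W_\infty$ is an $H_*(-;R)$-equivalence. Downstairs the free orbit of $2$-cells contributes a single $2$-cell $e$ to the cellular chains of $W$, and $\phi_\beta$ sends $[e]$ to $\epsilon(\beta)\,[e]$; by hypothesis $\epsilon(\beta)=0$ in $R$, so $\phi_\beta$ acts by zero on this extra class in $\widetilde H_*(W;R)$ and by the identity on the image of $\widetilde H_*(Y;R)$. Passing to the telescope, the extra class dies while the $Y$-summand survives, giving $\widetilde H_*(Y;R) \xrightarrow{\ \cong\ } \widetilde H_*(W_\infty;R)$. Since $X$ is $H_*(-;R)$-local, restriction is then a weak equivalence $\Map_*(W_\infty,X) \xrightarrow{\ \simeq\ } \Map_*(Y,X)$ at the basepoint $u$. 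Reading off homotopy fibers, the restriction $\Map_*(W,X) \to \Map_*(Y,X)$ over $u$ is the space of extensions across the free $2$-cells, namely $\Map^G_*(\bigvee_G S^2,\widetilde X) \simeq \Omega^2 \widetilde X$, whose homotopy groups are $\pi_{m+2}(X)$ and on which $\phi_\beta^*$ induces multiplication by $\beta$; hence the homotopy fiber of $\Map_*(W_\infty,X) \to \Map_*(Y,X)$ over $u$ is $\holim\bigl(\cdots \xrightarrow{\cdot\beta} \Omega^2\widetilde X \xrightarrow{\cdot\beta} \Omega^2\widetilde X\bigr)$, which the equivalence forces to be contractible. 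Its $\lim$–$\lim^1$ sequence yields the vanishing of $\lim$ and $\lim^1$ of $(\cdots \xrightarrow{\beta} \pi_k(X))$ for every $k \geq 2$, which is precisely derived $\beta$-completeness.

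The main obstacle I anticipate is the geometric realization of the second step together with the fiber identification in the third: one must choose the attaching maps so that the extra homology is a genuinely free $\mb Z[G]$-summand, rectify $\phi_\beta$ to a strict enough self-map to form the telescope, and—most delicately—verify through obstruction theory that the fiber of the restriction map is $\Omega^2\widetilde X$ with $\phi_\beta^*$ acting through the $\mb Z[\pi_1 X]$-module structure rather than some twisted variant. Once these are in place, matching contractibility of the homotopy limit with the derived-completeness condition of Example~\ref{exam:derived-complete} is routine bookkeeping with the $\lim^1$ exact sequence.
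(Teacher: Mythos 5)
Your proof is correct, and its engine is the same as the paper's: realize $\beta$ as a self-map acting on a free $\mb Z[G]$-summand of a homotopy group split off by a sphere attached trivially, observe that the telescope of this self-map is an $H_*(-;R)$-equivalence because $\epsilon(\beta)$ dies in $R$, apply locality of $X$ to get a contractible homotopy limit of a tower of $\beta$-multiplications on a double loop space, and finish with the Milnor $\lim^1$-sequence. The difference is the ambient complex: the paper simply takes $Y = X \vee S^k$ with the self-map given by $\mathrm{id}_X$ together with a representative of $(0,\beta) \in \pi_k(X) \oplus \mb Z[G]$, and compares $\Map(X',X) \to \Map(X,X)$ over the identity, so the identification of the fiber as $\Omega^k X$ with its $\beta$-action on $\pi_{k+m}(X)$ is immediate, with no choice of presentation, no map $u$, and no equivariant bookkeeping. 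You instead build the telescope on a presentation complex of $G$ with one added free orbit of $2$-cells; this costs you the overhead you rightly flag (constructing $u$, splitting $\pi_2(W) \cong \pi_2(Y)\oplus \mb Z[G]$, identifying the extension space $\Map^G_*(\bigvee_G S^2,\widetilde X)\simeq \Omega^2\widetilde X$ with untwisted $\beta$-action---all of which goes through, since $u$ induces the identity on $\pi_1$ and the spheres are attached along constant maps, so $W \simeq Y \vee S^2$ and the summand splits by Blakers--Massey), but it buys a genuinely universal test object: a single $H_*(-;R)$-equivalence of $2$-dimensional complexes depending only on $(G,\beta)$, against which every $X$ with fundamental group $G$ can be tested, extracting derived completeness of all $\pi_k$ for $k \geq 2$ at once rather than one $k$ at a time (though the paper's construction at $k=2$ would also do this through the higher homotopy of $\Omega^2 X$). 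Two small notes: your worry about rectifying $\phi_\beta$ is unnecessary, since $W = Y \vee S^2$ lets you define it strictly as $\mathrm{id}_Y$ wedge a chosen representative of $(0,\beta)$; and, like the paper's argument, yours reaches only $\pi_k$ for $k \geq 2$---$\pi_1 = G$ is not a $\mb Z[G]$-module and neither proof addresses it---so the conclusion should be read with that restriction.
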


\begin{proof}
  Fix the space $X$ and basepoint and consider the space
  $Y = X \vee S^k$. The group $\pi_k(Y)$ is isomorphic to
  $\pi_k(X) \oplus \mb Z[G]$, and so the element $\beta \in \mb Z[G]$
  lifts to a map $\beta\co Y \to Y$ given by the identity on $X$
  together with the map $S^k \to Y$ corresponding to the element
  $(0,\beta) \in \pi_k(X) \oplus \mb Z[G]$. The induced self-map of
  \[
    H_*(Y;R) \cong H_*(X;R) \oplus \widetilde H_*(S^k;R)
  \]
  is given by the identity on $H_*(X;R)$ together with the map
  $\epsilon(\beta)$ tensored with $R$ on the second factor. If
  $\epsilon(\beta)$ becomes zero after tensoring with $R$, then this
  map is zero on the second factor.

  Define
  \[
    X' = \hocolim (Y \too{\beta} Y \too{\beta} \cdots).
  \]
  By construction, the map
  \[
    H_*(X;R) \to H_*(X';R) = \colim H_*(Y;R)
  \]
  is an isomorphism. Therefore, $X \to X'$ is an $H_*(-;R)$-equivalence.

  For $X$ to be $H_*(-;R)$-local, the induced map
  \[
    \Map(X',X) \to \Map(X,X)
  \]
  must be a weak equivalence. Taking the fiber over the identity map
  of $X$, we find that there is an induced equivalence
  \[
    \holim(\cdots \too{\beta} \Omega^k X \too{\beta}
    \Omega^k X) \too{\sim} \ast.
  \]
  Using the Milnor $\lim^1$-sequence, we find that all of the homotopy
  groups of $X$ must be derived-complete with respect to $\beta$.
\end{proof}

\begin{rmk}
  If $R = \mb Z$, then this implies that any element $s \in \mb Z[G]$
  with $\epsilon(s) = \pm 1$ must act invertibly on the higher
  homotopy groups of $X$, and so the action must factor through a
  large localization $S^{-1} \mb Z[G]$.
\end{rmk}

\begin{exam}
  Consider $X = S^1 \vee S^2$, whose fundamental group is isomorphic
  to $\mb Z$ with generator $t$. The second homotopy group satisfies
  \[
    \pi_2(S^1 \vee S^2) \cong \mb Z[t^{\pm 1}]
  \]
  as a module over $\mb Z[t^{\pm 1}]$. This is not complete with
  respect to the ideal generated by $\beta = (t-1)$ even though
  $\epsilon(\beta) = 0$. Therefore, $S^1 \vee S^2$ is not local with
  respect to integral homology.
\end{exam}

\begin{exam}
  The space $\mb{RP}^2$ has fundamental group $\mb Z/2$ generated by
  an element $\sigma$, and the second homotopy group $\mb Z$ satisfies
  $\sigma(y) = -y$. The element $(1 - \sigma)$ has
  $\epsilon(1-\sigma) = 0$ and acts as multiplication by $2$. 
  Since $\mb Z$ is not complete in the $2$-adic topology we find that
  $\mb{RP}^2$ is not local with respect to integral
  homology.\footnote{The homology localization of $\mb{RP}^2$ has, in
    fact, a fiber sequence
    $(S^2)^\wedge_2 \to L\mb{RP}^2 \to K(\mb Z/2,1)$.}
\end{exam}

\begin{exam}
  If $R = \mb Q$, then any element $S \in \mb Z[G]$ with
  $\epsilon(s) \neq 0$ must act invertibly on the higher homotopy
  groups of $X$ for $X$ to be local with respect to rational
  homology. The homotopy groups of $K(\mb Q,1) \vee (S^3)_{\mb Q}$ are
  $\mb Q$ in degree $1$ and the rational group algebra $\mb Q[\mb Q]$
  in degree 3. If $t$ is the generator of $\mb Z \subset \mb Q$, the
  element $2t-1$ has $\epsilon(2t-1) = 1$ and does not act invertibly
  on this group algebra. Therefore, this space is not local with
  respect to rational homology even though its homotopy groups are
  rational.
\end{exam}

\begin{rmk}
  \label{rmk:bousfield-smith}
  Bousfield localization with respect to $E_*$-equivalences leads us
  to some uncomfortable pressure with our previous notation. At first
  glance, it is not clear whether being an equivalence on
  $E_*$-homology is the same as having the same mapping spaces into
  any $E_*$-local object.\footnote{One could, but should not, say it
    this way: it is not clear that an ($E_*$-equivalence)-equivalence
    is automatically an $E_*$-equivalence.} To prove this, one needs
  to prove that there is a sufficient supply of $E_*$-local objects:
  for any $X$, we need to be able to construct an $E_*$-homology
  isomorphism $X \to L_E X$ such that $L_E X$ is $E_*$-local.  Here is
  how Bousfield addressed this in
  \cite[Theorem~11.1]{bousfield-spacelocalization}. It is essentially
  a cardinality argument, whose general form is called the
  Bousfield--Smith cardinality argument in \cite[\S 2.3]{hirschhorn}.
  
  Let $E_*$ be a homology theory on spaces. We then have a class $S$
  of $E_*$-equivalences, which are those maps which induce
  equivalences on $E_*$-homology. Unfortunately, this is a proper
  class of morphisms, and so we cannot immediately apply the small
  object argument to construct localizations. Moreover, because we do
  not know anything about local objects we cannot assert that an
  $S$-equivalence $X \to Y$ is the same as a map inducing an
  isomorphism $E_* X \to E_* Y$.

  Bousfield addresses this by showing the following. Suppose $K \to L$
  is an inclusion of simplicial sets such that $E_* K \to E_* L$ is an
  isomorphism, and that we choose any simplex $\sigma$ of $L$. Then
  there exists a subcomplex $L' \subset L$ with the following
  properties:
  \begin{enumerate}
  \item The simplex $\sigma$ is contained in $L'$.
  \item The map $E_*(K \cap L') \to E_*(L')$ is an isomorphism on $E_*$.
  \item The complex $L'$ has size bounded by a cardinal $\kappa$,
    which \emph{depends only on $E$}.
  \end{enumerate}
  Because of the cardinality bound on $L'$, we can find a \emph{set}
  $T$ of $E_*$-equivalences $A \to B$ so that any such map
  $K \cap L' \to L'$ must be isomorphic to one of them; an arbitrary
  $E_*$-equivalence $K \to L$ can then be factored as a (possibly
  transfinite) sequence of pushouts along the maps in the set $T$
  followed by an equivalence. The maps in $T$ are $E_*$-isomorphisms,
  and an object is $S$-local if and only if it is $T$-local. The small
  object argument then applies to $T$, allowing us to construct
  $T$-localizations $Y \to LY$ which are also $E_*$-isomorphisms.

  We will see in \S~\ref{sec:model-categories} and
  \S~\ref{sec:infty-categories}, in general constructions of Bousfield
  localization, that this verification is the key step.
\end{rmk}

\subsection{Homology localization of spectra}

\begin{defn}
  For a spectrum $E$, a map $f\co X \to Y$ is an \emph{$E$-homology
    equivalence} (or simply an $E$-equivalence) if the corresponding
  map $E_* X \to E_* Y$ is an isomorphism, and we say that $Z$ is
  \emph{$E$-trivial} if $E_* Z = 0$. A map $f$ is an $E$-equivalence
  if and only if the cofiber of $f$ is $E$-trivial.\footnote{Again,
    the definitions of this section can be applied to a stable
    category $\mathcal{C}$ with a compatible symmetric monoidal
    structure.}
\end{defn}

This is most often employed when $E$ is a ring spectrum.
\begin{prop}
  If $E$ has a multiplication $m\co E \wedge E \to E$ with a left unit
  $\eta\co \mb S \to E$ in the homotopy category, then any spectrum
  $Y$ with a unital map $E \wedge Y \to Y$ is $E$-local.
\end{prop}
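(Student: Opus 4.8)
The plan is to reduce $E$-locality to a vanishing statement and then exploit the module structure to verify it. The class $S$ of $E$-equivalences is shift-stable, since $E_*$ commutes with suspension, and by definition a map is an $E$-equivalence precisely when its cofiber is $E$-trivial. Running this in reverse, every $E$-trivial spectrum $W$ occurs as the cofiber of the $E$-equivalence $* \to W$, so the cofibers $B_\alpha/A_\alpha$ appearing in the earlier characterization of local objects in a stable category range over exactly the $E$-trivial spectra. That proposition therefore reduces the claim to showing that $[W,Y]_{h\mathcal{C}} = 0$ for every spectrum $W$ with $E \wedge W \simeq *$.

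So I would fix such a $W$ and an arbitrary homotopy class $f\co W \to Y$, aiming to show it is null. Write $\eta_X\co X \cong \mb S \wedge X \to E \wedge X$ for the map induced by the unit; this is natural in $X$, so applied to $f$ it gives $(1_E \wedge f)\circ \eta_W = \eta_Y \circ f$ in $h\mathcal{C}$. Composing on the left with the action $\mu\co E \wedge Y \to Y$ and invoking the unit axiom $\mu \circ \eta_Y = \mathrm{id}_Y$, the left-hand side collapses to $f$, producing the factorization
\[
  f = \mu \circ (1_E \wedge f) \circ \eta_W.
\]
Since $W$ is $E$-trivial, $E \wedge W \simeq *$, and so the first map $\eta_W\co W \to E \wedge W$ in this composite is null; hence $f$ is null as well.

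This shows $[W,Y]_{h\mathcal{C}} = 0$ for all $E$-trivial $W$, which is exactly the condition for $Y$ to be $E$-local. I do not expect a serious obstacle: the entire argument is the single naturality square together with the observation that $E$-triviality is literally the contractibility of $E \wedge W$, which is what absorbs the unit. The only point worth flagging is that the proof uses solely the unit $\eta$ and the unit axiom for the action $\mu$; neither the multiplication $m$ nor any associativity enters, so the conclusion in fact holds for any spectrum carrying a unital $E$-action.
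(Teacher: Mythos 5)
Your proof is correct and is essentially the paper's own argument: the paper uses exactly the factorization $f = \mu \circ (1_E \wedge f) \circ \eta_W$ through $E \wedge W \simeq *$ to conclude $[W,Y] = 0$ for $E$-trivial $W$. You are merely more explicit about two points the paper leaves implicit---the reduction of $E$-locality to this vanishing statement via the shift-stable characterization, and the naturality square justifying the factorization---and your closing observation that only the unit and unitality of the action are used is accurate.
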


\begin{rmk}
  Such spectra $Y$ are sometimes called \emph{homotopy $E$-modules.}
  Any spectrum of the form $E \wedge W$ is a homotopy
  $E$-module.
\end{rmk}

\begin{proof}
  Any map $f\co Z \to Y$ has the following factorization in
  the homotopy category:
  \[
    Z \too{\eta \wedge 1} E \wedge Z \too{1 \wedge f} E \wedge Y
    \too{m} Y
  \]
  If $Z$ has trivial $E$-homology, then $E \wedge Z$ is trivial and so
  the composite $Z \to Y$ is nullhomotopic. Therefore, $[Z, Y] = 0$
  for all $E$-trivial $Z$, as desired.
\end{proof}

\begin{cor}
  If $E$ has a multiplication $m\co E \wedge E \to E$ with a left unit
  $\eta\co \mb S \to E$ in the homotopy category, then any homotopy
  limit of  spectra that admit homotopy $E$-module structures is
  $E$-local.
\end{cor}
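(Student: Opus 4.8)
The plan is to reduce the statement to two facts that are already in hand: that each individual homotopy $E$-module is $E$-local, and that $E$-local objects are closed under homotopy limits. Recall that by definition $E$-locality means locality with respect to the class $S$ of $E$-equivalences, so the general results of Proposition~\ref{prop:saturation-omnibus} apply verbatim once we fix $S$ to be this class.

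First I would invoke the preceding proposition, which asserts that any spectrum $Y$ admitting a unital map $E \wedge Y \to Y$ in the homotopy category is $E$-local. Thus, given a diagram $\{Y_j\}$ of spectra in which each $Y_j$ admits a homotopy $E$-module structure, each $Y_j$ is \emph{individually} $E$-local.

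Next I would apply part (3) of Proposition~\ref{prop:saturation-omnibus}, which states that the collection of $S$-local objects is closed under homotopy limits. With $S$ the class of $E$-equivalences, this says precisely that a homotopy limit of $E$-local spectra is again $E$-local. Since each object of our diagram is $E$-local by the previous step, we conclude that $\holim Y_j$ is $E$-local, which is the desired statement.

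The one point that deserves care -- and what I would emphasize rather than treat as a genuine obstacle -- is that we do \emph{not} require the module structures to be compatible with the maps of the diagram $\{Y_j\}$. The closure result uses only that each object appearing in the diagram is local, which is a property of individual objects and not of the diagram or its structure maps. This is exactly why the hypothesis asks merely that each spectrum \emph{admit} a homotopy $E$-module structure, with no coherence demanded across the indexing category; all of the real content was already supplied in establishing that homotopy $E$-modules are $E$-local and that locality is inherited under homotopy limits.
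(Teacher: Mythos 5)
Your proof is correct and is exactly the argument the paper intends: the corollary is stated without proof precisely because it is the immediate combination of the preceding proposition (homotopy $E$-modules are $E$-local) with the closure of $S$-local objects under homotopy limits from Proposition~\ref{prop:saturation-omnibus}, applied with $S$ the class of $E$-equivalences. Your remark that no compatibility of the module structures with the diagram maps is needed---locality being a property of individual objects---is also the right observation and correctly explains the word ``admit'' in the hypothesis.
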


\begin{exam}
  A particular case of interest is when $E = H\mb Z$. Any
  Eilenberg--Mac Lane spectrum $HA$ is $H\mb Z$-local, being of the
  form $H\mb Z \wedge MA$ for a Moore spectrum for $A$.

  Then any connective spectrum $Y$ is $H\mb Z$-local, as follows.  As
  $H\mb Z$-local objects form a thick subcategory, any spectrum with
  finitely many nonzero homotopy groups is therefore $H\mb
  Z$-local. If $Y$ is connective then $P_n Y$ is $H\mb Z$-local due to
  having a finite Postnikov tower. Therefore, $Y = \holim P_n Y$ is
  the homotopy limit of $H\mb Z$-local spectra, and is thus
  $H\mb Z$-local.

  Similarly, any product of Eilenberg--Mac Lane spectra
  $\prod \Sigma^n HA_n$ is also $H\mb Z$-local. Any rational spectrum
  is of this form.

  However, not all spectra are $H\mb Z$-local. For any prime $p$ and
  integer $n > 0$, there are $p$-primary Morava $K$-theories $K(n)$
  such that $H\mb Z \wedge K(n)$ is trivial; these are
  $H\mb Z$-acyclic. The complex $K$-theory spectrum $KU$ satisfies the
  property that $H_*(KU;\mb Z) \to H_*(KU;\mb Q)$ is an isomorphism:
  from this we can find that $KU \to KU_{\mb Q}$ is an
  $H\mb Z$-equivalence. The target is also $H\mb Z$-local because it
  is rational, and so $KU_{\mb Q}$ is the $H\mb Z$-localization of
  $KU$.
\end{exam}

\begin{exam}
  We can consider the case where $E = H\mb Z/p$. By a similar
  argument, we find that any connective spectrum which is $p$-adically
  complete in the sense of Example~\ref{exam:p-completion} is also
  $H\mb Z/p$-complete. Again, in connective cases there is not a
  difference between being $p$-adically complete and being
  $H\mb Z/p$-local.

  For nonconnective spectra, these are quite different. The Morava
  $K$-theories $K(n)$ are $p$-adically complete but
  $H\mb Z/p$-trivial. The periodic complex $K$-theory spectrum $KU$
  has $\pi_* (KU^\wedge_p) \cong (\pi_* KU)^\wedge_p$, but $KU$ is
  also $H\mb Z/p$-trivial.
\end{exam}

These localizations have the flavor of completion with respect to an
ideal. In some cases we can express them as such.
\begin{defn}
  Suppose that $E$ has a binary multiplication $m$ with a left unit
  $\eta\co \mb S \to E$, and let $j\co I \to \mb S$ be the fiber of
  $\eta\co \mb S \to E$. Assemble these into the inverse system
  \[
    \dots \to I^{\wedge 3}\too{j \wedge 1 \wedge 1} I \wedge I \too{j
      \wedge 1} I \too{j} \mb S
  \]
  The \emph{$E$-nilpotent completion} $X^\wedge_E$ is the homotopy
  limit
  \[
    \holim_n (\mb S / I^{\wedge n}) \wedge X,
  \]
  with map $X \to X^\wedge_E$ induced by the maps $\mb S \to \mb S /
  I^{\wedge n}$.
\end{defn}

\begin{prop}
  The $E$-nilpotent completion is always $E$-local.

  If $E$ is a finite complex, or $X$ and $I$ are connective and $E$ is
  of finite type, then the map $X \to X^\wedge_E$ is an $E$-localization.
\end{prop}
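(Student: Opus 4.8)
The plan is to treat the two assertions separately: $E$-locality of $X^\wedge_E$ is essentially formal, while the localization statement reduces to showing that $X \to X^\wedge_E$ is an $E$-equivalence, and it is only there that the finiteness hypotheses are used.

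For locality, I would first observe that the cofiber sequence $I \to \mb S \to E$ identifies $\mb S/I$ with $E$, and that smashing this sequence with $I^{\wedge(n-1)}$ exhibits $I^{\wedge(n-1)} \wedge E$ as the cofiber of $I^{\wedge n} \to I^{\wedge(n-1)}$. An octahedral argument then shows that the tower $\{\mb S/I^{\wedge n}\}$ has successive cofibers the homotopy $E$-modules $I^{\wedge(k-1)}\wedge E$, so each $\mb S/I^{\wedge n}$ lies in the thick subcategory generated by $E$. Smashing with $X$ preserves these cofiber sequences, so $(\mb S/I^{\wedge n})\wedge X$ is built by finitely many cofiber sequences from the homotopy $E$-modules $I^{\wedge(k-1)}\wedge E \wedge X$, each of which is $E$-local by the earlier Proposition. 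As the $E$-local objects form a thick subcategory and are closed under homotopy limits, both $(\mb S/I^{\wedge n})\wedge X$ and hence $X^\wedge_E = \holim_n (\mb S/I^{\wedge n})\wedge X$ are $E$-local.

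For the equivalence claim, I would use the compatible cofiber sequences $I^{\wedge n}\wedge X \to X \to (\mb S/I^{\wedge n})\wedge X$. Taking $\holim_n$ identifies the fiber of $X \to X^\wedge_E$ with $F = \holim_n (I^{\wedge n}\wedge X)$, and in a stable category $X \to X^\wedge_E$ is an $E$-equivalence precisely when $E \wedge F$ is contractible. The crucial structural input is that, using the left unit and the symmetry of the smash product, the map $I \wedge E \to E$ induced by $j\co I \to \mb S$ is null-homotopic; smashing with the remaining factors, every transition map $E \wedge I^{\wedge n}\wedge X \to E \wedge I^{\wedge(n-1)}\wedge X$ is null. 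A tower of spectra with null transition maps has contractible homotopy limit, since both the $\lim$ and the $\lim^1$ of its homotopy groups vanish, so $\holim_n (E \wedge I^{\wedge n}\wedge X) \simeq \ast$.

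Everything therefore hinges on the interchange map $E \wedge \holim_n(I^{\wedge n}\wedge X) \to \holim_n(E \wedge I^{\wedge n}\wedge X)$ being an equivalence: granting this, $E \wedge F \simeq \ast$ and the proof is complete. When $E$ is a finite complex this is immediate, since $E \wedge (-) \simeq F(DE,-)$ commutes with all homotopy limits. The connective finite-type case is the genuine difficulty and the step I expect to be the main obstacle. Here I would choose a cell structure on $E$ with finitely many cells in each dimension---available because $E$ is connective of finite type---so that the skeleta $E_{(k)}$ are finite and $E/E_{(k)}$ is $k$-connected. The interchange holds for each finite $E_{(k)}$, and since smashing commutes with homotopy colimits and $E = \hocolim_k E_{(k)}$, the interchange for $E$ reduces to commuting $\hocolim_k$ past $\holim_n$. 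This is not formal, but the hypotheses that $X$ and $I$ are connective make the spectra $I^{\wedge n}\wedge X$ uniformly bounded below, so that $E_{(k)}\wedge I^{\wedge n}\wedge X \to E \wedge I^{\wedge n}\wedge X$ is highly connected uniformly in $n$. In any fixed homotopy degree only finitely many skeleta contribute and the relevant Milnor sequences stabilize, which is exactly what permits $\hocolim_k$ and $\holim_n$ to be exchanged degreewise. Finite type supplies the finite skeleta and connectivity supplies the uniform bounds; assembling this convergence argument is the technical heart of the proof.
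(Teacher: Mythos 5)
Your proposal is correct and follows essentially the same route as the paper: locality via induction on the cofiber sequences $\mb S/I^{\wedge n} \wedge X \to \mb S/I^{\wedge (n-1)} \wedge X \to E \wedge I^{\wedge(n-1)} \wedge X$, nullhomotopy of the transition maps in the $E$-smashed tower from the multiplicative retraction (with the symmetry/opposite-multiplication caveat handled the same way), and reduction of the whole question to the interchange of $E \wedge (-)$ with the homotopy limit, which is where the finiteness hypotheses enter. The only difference is that you spell out the skeletal-filtration and uniform-connectivity argument justifying the interchange in the connective finite-type case, a step the paper asserts without proof; your sketch of it is sound.
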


\begin{proof}
  The cofiber sequence $I \to \mb S \to E$, after smashing with
  $I^{\wedge (n-1)}$, becomes a cofiber sequence
  $I^{\wedge n} \to I^{\wedge (n-1)} \to E \wedge I^{\wedge (n-1)}$, and so there are
  cofiber sequences
  \[
    \mb S/I^{\wedge n} \wedge X \to \mb S/I^{\wedge (n-1)} \wedge X
    \to E \wedge I^{\wedge (n-1)} \wedge X.
  \]
  By induction on $n$ we find that $\mb S/I^{\wedge n} \wedge X$
  is $E$-local, and so the homotopy limit $X^\wedge_E$ is $E$-local.

  After smashing with $E$, the cofiber sequence
  \[
    E \wedge I^{\wedge n} \wedge X \to E \wedge I^{\wedge (n-1)}
    \wedge X \to E \wedge E \wedge
    I^{\wedge (n-1)} \wedge X
  \]
  has a retraction of the second map via the (opposite) multiplication
  of $E$, and so the first map is nullhomotopic. Therefore, the
  homotopy limit $\holim E \wedge (I^{\wedge n} \wedge X)$ is trivial, and
  from the cofiber sequences
  \[
    E \wedge (I^{\wedge n} \wedge X) \to E \wedge X \to E \wedge (\mb
    S/I^{\wedge n} \wedge X)
  \]
  we find that $E \wedge X \to \holim(E \wedge (\mb S/I^{\wedge n} \wedge
  X)$ is an equivalence.

  This reduces us to proving that the map
  \[
    E \wedge \holim(\mb S/I^{\wedge n} \wedge X) \to
    \holim(E \wedge \mb S/I^{\wedge n} \wedge X)
  \]
  is an equivalence: we can move the smash product with $E$ inside the
  homotopy limit.  This is always true if $E$ is finite or if $E$ is
  of finite type and the homotopy limit is of connective objects.
\end{proof}

\begin{rmk}
  The spectral sequence arising from the inverse system defining
  $X^\wedge_E$ is the \emph{generalized Adams--Novikov spectral
    sequence based on $E$-homology}. It often abuts to the homotopy
  groups of the Bousfield localization with respect to $E$.

  We can generalize our construction by allowing more general towers
  with a nilpotence property, after Bousfield in 
  \cite{bousfield-spectralocalization}, or by extending these methods
  to the category of modules over a ring spectrum, as Baker--Lazarev
  did in \cite{baker-lazarev-adamss} or Carlsson did in
  \cite{carlsson-completions}.
\end{rmk}

\begin{exam}
  For any prime $p$ and any $n > 0$, we have the Johnson--Wilson
  homology theories $E(n)_*$ and the Morava $K$-theories
  $K(n)_*$. Associated to these we have $E(n)$-localization functors
  and $K(n)$-localization functors, as well as categories of
  $E(n)$-local and $K(n)$-local spectra, which play an essential role
  in chomatic homotopy theory. Ravenel conjectured, and
  Devinatz--Hopkins--Smith proved, that the localization $L_{E(n)}$ is
  a smashing localization \cite{ravenel-localization,
    devinatz-hopkins-smith-nilpotence, ravenel-orangebook}. These
  localizations also have \emph{chromatic fractures} which are built
  using the following result.
\end{exam}

\begin{prop}
  Suppose that $E$ and $K$ are spectra such that $L_K L_E X$ is always
  trivial. Then, for all $X$, there is a homotopy pullback
  diagram
  \[
    \xymatrix{
      L_{E \vee K} X \ar[r] \ar[d] & L_E X \ar[d]\\
      L_K X \ar[r] & L_E L_K X.
    }
  \]
\end{prop}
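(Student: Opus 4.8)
The plan is to identify the homotopy pullback $P = \holim(L_E X \to L_E L_K X \leftarrow L_K X)$ with $L_{E\vee K}X$, since the two defining properties of a localization---being local and receiving an equivalence from $X$---can each be verified directly. Throughout I would use that a spectrum is $(E\vee K)$-trivial if and only if it is both $E$-trivial and $K$-trivial, because $(E\vee K)_* Z \cong E_* Z \oplus K_* Z$; equivalently, a map is an $(E\vee K)$-equivalence exactly when it is simultaneously an $E$-equivalence and a $K$-equivalence. I would also restate the hypothesis: $L_K L_E X \simeq *$ for all $X$ says that every spectrum of the form $L_E X$ is $K$-trivial, that is, every $E$-local spectrum is $K$-acyclic.

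First I would check that $P$ is $(E\vee K)$-local. Since every $(E\vee K)$-equivalence is in particular an $E$-equivalence, the earlier comparison proposition shows that every $E$-local object is $(E\vee K)$-local, and likewise every $K$-local object is $(E\vee K)$-local. Hence all three corners $L_E X$, $L_K X$, and $L_E L_K X$ are $(E\vee K)$-local, and because $(E\vee K)$-local objects are closed under homotopy limits (Proposition~\ref{prop:saturation-omnibus}) their homotopy pullback $P$ is $(E\vee K)$-local. Next I would produce the comparison map $X \to P$: the naturality square of the localization unit $\eta_E\co \mathrm{id} \to L_E$, evaluated on $\eta_K\co X \to L_K X$, is exactly
\[
  \xymatrix{
    X \ar[r]^{\eta_K} \ar[d]_{\eta_E} & L_K X \ar[d]\\
    L_E X \ar[r] & L_E L_K X,
  }
\]
which commutes and therefore induces a natural map $X \to P$.

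The heart of the argument is to show this map is an $(E\vee K)$-equivalence, i.e. that its fiber $T = \mathrm{fib}(X \to P)$---which is the total (iterated) fiber of the square above---is both $E$-trivial and $K$-trivial. Computing $T$ as the fiber of the map $\mathrm{fib}(X \to L_E X) \to \mathrm{fib}(L_K X \to L_E L_K X)$, each inner term is $E$-trivial, since each inner map is an $E$-localization map and so an $E$-equivalence with $E$-trivial fiber; as the $E$-trivial spectra form a thick subcategory closed under fibers, $T$ is $E$-trivial. Computing $T$ instead as the fiber of $\mathrm{fib}(X \to L_K X) \to \mathrm{fib}(L_E X \to L_E L_K X)$, the first term is $K$-trivial, and here the hypothesis enters: $L_E X$ and $L_E L_K X$ are $E$-local, hence $K$-acyclic, so their fiber is $K$-trivial and therefore $T$ is $K$-trivial.

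This last step---recognizing $\mathrm{fib}(L_E X \to L_E L_K X)$ as $K$-trivial via the assumption---is the only nonformal input, and I expect it to be the main obstacle; everything else is bookkeeping with thick subcategories and the symmetry of total fibers. With $T$ shown to be $(E\vee K)$-trivial, the map $X \to P$ is an $(E\vee K)$-equivalence into an $(E\vee K)$-local object, so it is an $(E\vee K)$-localization. By uniqueness of localizations, $P \simeq L_{E\vee K}X$ under $X$, and the maps $P \to L_E X$ and $P \to L_K X$ are the corresponding localization maps; this is precisely the asserted homotopy pullback square.
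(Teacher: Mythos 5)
Your proof is correct and takes essentially the same route as the paper: both arguments show the pullback $P$ is $(E \vee K)$-local because its corners are $E$- or $K$-local, and then show the fiber of $X \to P$ is $(E \vee K)$-trivial by exploiting the two directions of the square, with the hypothesis entering at exactly the point you flagged (the right-hand vertical map runs between $E$-local, hence $K$-acyclic, spectra). The only difference is dialect: the paper verifies that the square becomes a homotopy pullback after smashing with $E$ and with $K$ separately, which---since smashing with a fixed spectrum preserves fiber sequences---is the same computation as your identification of the total fiber via iterated fibers in the two orders.
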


\begin{proof}
  The objects in the diagram
  \[
    L_E X \to L_E L_K X \leftarrow L_K X
  \]
  are either $E$-local or $K$-local, and hence automatically
  $E \vee K$-local; therefore, the homotopy pullback $P$ is $E
  \vee K$-local. It then suffices to show that the fiber of the map
  $X \to P$ is $E \vee K$-trivial, which is equivalent to showing
  that
  \[
    \xymatrix{
      X \ar[r] \ar[d] & L_E X \ar[d]\\
      L_K X \ar[r] & L_E L_K X.
    }
  \]
  becomes a homotopy pullback after smashing with $E \vee K$.
  After smashing with $E$, the horizontal maps become equivalences,
  and so the diagram is a pullback. After smashing with $K$, the
  left-hand vertical map is an equivalence and the right-hand vertical
  map is between trivial objects, so the diagram is also a
  pullback. Therefore, the diagram becomes a pullback after smashing
  with $E \vee K$.
\end{proof}

\section{Model categories}
\label{sec:model-categories}

The lifting characterization of local objects from
\S\ref{sec:lifting} falls very naturally into the framework of
Quillen's model categories. The groundwork for this is in \cite[\S
10]{bousfield-spacelocalization}.

\begin{defn}
  Suppose that $\mathcal{M}$ is a category with a model structure. We
  say that a second model structure $\mathcal{M}'$ with the same
  underlying category is a \emph{left Bousfield localization} of
  $\mathcal{M}$ if $\mathcal{M}'$ has the same family of cofibrations
  but a larger family of weak equivalences than $\mathcal{M}$.
\end{defn}

As a first consequence, note that the identity functor (which is its
own right and left adjoint) preserves cofibrations and takes the weak
equivalences in $\mathcal{M}$ to weak equivalences in
$\mathcal{M}'$. This makes it part of a Quillen adjunction
\[
  \mathcal{M} \rightleftarrows \mathcal{M}'.
\]
This has the immediate consequence that the induced adjunction on
homotopy categories is a reflective localization.
\begin{prop}
  Suppose that
  $L\co \mathcal{M} \rightleftarrows \mathcal{M}' \co R$
  is the adjunction associated to a left Bousfield localization. Then
  the right adjoint $R$ identifies the homotopy category $h\mathcal{M}'$
  with a full subcategory of $h\mathcal{M}$.
\end{prop}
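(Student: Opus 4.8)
The plan is to pass to the derived adjunction on homotopy categories and then invoke the standard categorical fact that a right adjoint is fully faithful exactly when the counit of the adjunction is a natural isomorphism. Concretely, the identity functor gives the Quillen adjunction $L\co \mathcal{M} \rightleftarrows \mathcal{M}' \co R$ already produced above, in which both $L$ and $R$ are the identity on the common underlying category; this induces a derived adjunction $\mathbb{L}L \dashv \mathbb{R}R$ between $h\mathcal{M}$ and $h\mathcal{M}'$. Since $\mathbb{R}R \co h\mathcal{M}' \to h\mathcal{M}$ is the right adjoint, showing it is fully faithful---equivalently, that it realizes $h\mathcal{M}'$ as a full subcategory of $h\mathcal{M}$---amounts to checking that the derived counit $\mathbb{L}L\,\mathbb{R}R \Rightarrow \mathrm{id}_{h\mathcal{M}'}$ is a natural isomorphism. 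It suffices to verify this on objects that are fibrant in $\mathcal{M}'$, since every object of $h\mathcal{M}'$ is isomorphic to such a one.

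Next I would compute the counit explicitly. For an object $Y$ that is fibrant in $\mathcal{M}'$, no fibrant replacement is needed and $R$ acts as the identity, so $\mathbb{R}R(Y)$ is just $Y$ regarded as an object of $\mathcal{M}$. Applying $\mathbb{L}L$ then requires a cofibrant replacement of $Y$ taken \emph{in} $\mathcal{M}$; writing $QY \to Y$ for such a replacement we get $\mathbb{L}L\,\mathbb{R}R(Y) = QY$. Because the underived counit $LR \Rightarrow \mathrm{id}$ is the identity (both functors are the identity), the derived counit at $Y$ is represented simply by the cofibrant-replacement map $QY \to Y$, viewed as a morphism of $h\mathcal{M}'$.

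The key step is then immediate from the defining property of a left Bousfield localization: $\mathcal{M}'$ has the same cofibrations as $\mathcal{M}$ but a larger family of weak equivalences, so every $\mathcal{M}$-weak equivalence is in particular an $\mathcal{M}'$-weak equivalence. The map $QY \to Y$ is an $\mathcal{M}$-weak equivalence, hence an $\mathcal{M}'$-weak equivalence, and therefore an isomorphism in $h\mathcal{M}'$. Thus the counit is a natural isomorphism, $\mathbb{R}R$ is fully faithful, and $R$ identifies $h\mathcal{M}'$ with the full subcategory of $h\mathcal{M}$ spanned by its essential image.

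The only real subtlety I anticipate is the bookkeeping in the second paragraph: because both adjoints are literally the identity functor, one must keep careful track of which model structure governs each replacement---the fibrant replacement for $\mathbb{R}R$ lives in $\mathcal{M}'$, while the cofibrant replacement for $\mathbb{L}L$ lives in $\mathcal{M}$---and confirm that the composite representing the derived counit is exactly the $\mathcal{M}$-cofibrant-replacement map. Once this identification is pinned down, the inclusion of $\mathcal{M}$-weak equivalences into $\mathcal{M}'$-weak equivalences does all the remaining work; notably, no information about weak equivalences \emph{between} local (fibrant) objects is needed for this direction.
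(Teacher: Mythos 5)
Your proof is correct and follows essentially the same route as the paper: both compute the derived counit explicitly as (a zig-zag ending in) the $\mathcal{M}$-cofibrant-replacement map and then observe that, since $\mathcal{M}$-weak equivalences are $\mathcal{M}'$-weak equivalences, this map is already invertible in $h\mathcal{M}'$. Your only variation is reducing to $\mathcal{M}'$-fibrant objects up front, where the paper instead handles an arbitrary object $y$ by keeping the fibrant-replacement leg $y \to y_{f'}$ in the zig-zag; the two bookkeeping choices are interchangeable.
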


\begin{proof}
  It is necessary and sufficient to show that the counit
  $\epsilon\co LR x \to x$ of the adjunction on homotopy categories is
  always an isomorphism, for this is the same as asking that, in the
  factorization
  \[
    \Hom_{h\mathcal{M}}(Rx,Ry) \cong
    \Hom_{h\mathcal{M}'}(LRx,y) \to
    \Hom_{h\mathcal{M}'}(x,y),
  \]
  the second map is an isomorphism.
  
  For an object of $y$, the composite functor $LR$ on homotopy
  categories is calculated as follows: find a fibrant replacement
  $y \too{\simeq'} y_{f'}$ in $\mathcal{M}'$, apply the identity
  functor to get to $\mathcal{M}$, find a cofibrant replacement
  $(y_{f'})_c \too{\simeq} y_{f'}$ in $\mathcal{M}$, and apply the
  identity functor to get to $\mathcal{M}'$. The counit of the
  adunction is represented in the homotopy category of $\mathcal{M}'$
  by the composite
  \[
    (y_{f'})_c \too{\simeq} y_{f'} \xleftarrow{\simeq'} y.
  \]
  However, equivalences in $\mathcal{M}$ are automatically
  equivalences in $\mathcal{M}'$, and so the counit is an
  isomorphism in the homotopy category of $\mathcal{M}'$.
\end{proof}

Because fibrations and acyclic fibrations are determined by having the
right lifting property against acyclic cofibrations and fibrations,
the new model structure has the same acyclic fibrations but fewer
fibrations. For example, a fibrant object in the left Bousfield
localization has to have a lifting property against the cofibrations
which are weak equivalences in $\mathcal{M}'$.

The next proposition establishes the connection between left Bousfield
localization and ordinary Bousfield localization when both are defined
and compatible: the case of a simplicial model category.

\begin{prop}
  Suppose that $\mathcal{M}$ is a simplicially enriched category with
  two model structures, making $\mathcal{M} \to \mathcal{M}'$ is a
  left Bousfield localization of simplicial model categories. Let $S$
  be the collection of weak equivalences between cofibrant objects in
  $\mathcal{M}'$. Then, in the category of cofibrant-fibrant objects
  of $\mathcal{M}$, the objects which are fibrant in $\mathcal{M}'$
  are precisely the $S$-local fibrant objects.
\end{prop}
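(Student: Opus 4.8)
The plan is to prove the two inclusions separately: that an object of $\mathcal{C}$ (the cofibrant--fibrant objects of $\mathcal{M}$) which is fibrant in $\mathcal{M}'$ is $S$-local, and conversely that an $S$-local object of $\mathcal{C}$ is fibrant in $\mathcal{M}'$. Throughout I will lean on three shared-structure facts: that $\mathcal{M}$ and $\mathcal{M}'$ have the same underlying simplicial category, so the mapping space $\Map(A,B)$ is unambiguous; that they have the same cofibrations, hence the same cofibrant objects; and that $\mathcal{M}'$ has more weak equivalences and therefore more acyclic cofibrations and fewer fibrations than $\mathcal{M}$, so that every $\mathcal{M}'$-fibrant object is automatically $\mathcal{M}$-fibrant.

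For the first inclusion, suppose $Y \in \mathcal{C}$ is fibrant in $\mathcal{M}'$. Any $f\co A \to B$ in $S$ is by definition a weak equivalence of $\mathcal{M}'$ between cofibrant objects, so the simplicial model structure on $\mathcal{M}'$ (the SM7 axiom, via Ken Brown's lemma) guarantees that $f^*\co \Map(B,Y) \to \Map(A,Y)$ is a weak equivalence of simplicial sets. As this is precisely the defining condition, $Y$ is $S$-local. Note that $A$ and $B$ need not be fibrant here, since SM7 requires only cofibrancy of the source and target together with fibrancy of $Y$; this sidesteps any worry that the objects of $S$ fail to lie in $\mathcal{C}$.

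The second inclusion is the substantive direction. Suppose $Y \in \mathcal{C}$ is $S$-local, and choose a fibrant replacement $j\co Y \to Z$ in $\mathcal{M}'$, so $j$ is an acyclic cofibration of $\mathcal{M}'$ and $Z$ is $\mathcal{M}'$-fibrant. Then $Z$ is cofibrant (as $Y$ is and cofibrations agree) and $\mathcal{M}$-fibrant, so $Z \in \mathcal{C}$, and by the first inclusion $Z$ is $S$-local. The crucial observation is that $j$, being a weak equivalence of $\mathcal{M}'$ between cofibrant objects, lies in $S$ and is in particular an $S$-equivalence whose source and target are both $S$-local. I would then invoke the fact that an $S$-equivalence between $S$-local objects is a homotopy equivalence in $\mathcal{C}$, which runs exactly as in the proof of Proposition~\ref{prop:uniquelocalization}: applying $j^*$ with $W = Y$ and $W = Z$ gives weak equivalences $\Map(Z,W) \to \Map(Y,W)$, and chasing $\mathrm{id}_Y$ and $\mathrm{id}_Z$ through the induced bijections on $\pi_0$ produces a homotopy inverse to $j$. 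Hence $j$ is a weak equivalence in $\mathcal{M}$, and being a cofibration it is an $\mathcal{M}$-acyclic cofibration. Since $Y$ is $\mathcal{M}$-fibrant, lifting $\mathrm{id}_Y$ along $j$ against $Y \to *$ yields a retraction $r\co Z \to Y$ with $rj = \mathrm{id}_Y$, exhibiting $Y$ as a retract of $Z$; as $\mathcal{M}'$-fibrant objects are closed under retracts, $Y$ is $\mathcal{M}'$-fibrant.

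The main obstacle is this second inclusion, and within it the translation between the two notions of equivalence. Everything hinges on recognizing the $\mathcal{M}'$-fibrant replacement map $j$ as a member of $S$, and then \emph{upgrading} it—using $S$-locality of both endpoints—from a mere $\mathcal{M}'$-weak equivalence to an honest $\mathcal{M}$-weak equivalence; once that upgrade is in hand, the retract argument is purely formal. I expect the only delicate points to be the bookkeeping in the homotopy-inverse construction and the repeated appeals to the fact that passing between $\mathcal{M}$ and $\mathcal{M}'$ preserves cofibrations and mapping spaces while only enlarging the weak equivalences.
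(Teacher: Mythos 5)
Your proof is correct, and it is in fact more complete than the paper's own. Your first inclusion---SM7 applied to an acyclic cofibration against the fibrant object $Y$ gives an acyclic fibration of mapping spaces, and Ken Brown's lemma then upgrades this to all weak equivalences between cofibrant objects of $\mathcal{M}'$---is essentially word-for-word the paper's argument, including the observation that $A$ and $B$ need not be fibrant. But the paper's written proof stops there: it establishes only that an $\mathcal{M}'$-fibrant object of the bifibrant subcategory is $S$-local, leaving the converse half of ``precisely'' unaddressed. Your second inclusion supplies exactly that missing half, by the standard argument (it appears, for instance, in Hirschhorn's treatment of localized model structures): factor $Y \to *$ in $\mathcal{M}'$ to obtain an acyclic cofibration $j\co Y \to Z$ with $Z$ fibrant in $\mathcal{M}'$; check $Z$ is cofibrant and $\mathcal{M}$-fibrant, hence $S$-local by the first inclusion; note $j \in S$, so $S$-locality of both $Y$ and $Z$ makes $j^*$ a weak equivalence on $\Map(-,Y)$ and $\Map(-,Z)$, and the $\pi_0$ chase produces a homotopy inverse; conclude $j$ is a weak equivalence in $\mathcal{M}$ and run the retract argument. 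The one step worth making explicit is the upgrade from homotopy equivalence to $\mathcal{M}$-weak equivalence: this is the model-category Whitehead theorem, and it is precisely where you use that $Y$ and $Z$ are cofibrant--fibrant in $\mathcal{M}$, so the mapping spaces are Kan complexes whose $\pi_0$ computes honest homotopy classes compatible with composition. All the supporting bookkeeping you invoke (shared cofibrations and mapping spaces, $\mathcal{M}'$-fibrations being $\mathcal{M}$-fibrations, closure of fibrant objects under retracts) is correct. In short: same route as the paper on the direction it proves, plus a correct standard proof of the direction its written proof omits.
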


\begin{proof}
  Fix an object $Y$ of $\mathcal{M}'$. For it to be fibrant in
  $\mathcal{M}'$, it must also be fibrant in $\mathcal{M}$.
  Suppose $Y$ is a fibrant object in $\mathcal{M}'$. Given any acyclic
  cofibration $A \to B$ in $\mathcal{M}'$, the map of simplicial sets
  $\Map_{\mathcal{M'}}(A,Y) \to \Map_{\mathcal{M}'}(B,Y)$ is an
  acyclic fibration by the SM7 axiom of simplicial model
  categories. Thus, the functor $\Map_{\mathcal{M}'}(-,Y)$ from
  $\mathcal{M}'$ to the homotopy category of spaces takes acyclic
  cofibrations to isomorphisms. Thus, Ken Brown's lemma implies that
  it also takes weak equivalences between cofibrant objects in
  $\mathcal{M}'$ to isomorphisms in the homotopy category of spaces.

  Suppose that we have a map $f\co A \to B$ in $S$ between cofibrant
  objects of $\mathcal{M}$ that is also a weak equivalence in
  $\mathcal{M}'$. Then $f$ is also a weak equivalence between
  cofibrant objects of $\mathcal{M}'$. The induced map
  $\Map_{\mathcal{M}}(B,Y) \to \Map_{\mathcal{M}}(A,Y)$ is a weak
  equivalence because the mapping spaces in $\mathcal{M}$ and
  $\mathcal{M}'$ are the same. Thus, $Y$ is $S$-local.
\end{proof}

We would now like to establish results in the other direction. Namely,
given a model category $\mathcal{M}$ and a collection $S$ of maps
$A_i \to B_i$ in $\mathcal{M}$, we would like to establish the
existence of a Bousfield localization $\mathcal{M}'$ of
$\mathcal{M}$. Because we want to work within the already-established
homotopy theory of $\mathcal{M}$, we want to use derived mapping
spaces out of $A$ and $B$ and replace homotopy lifting
properties with strict lifting properties. We assume without loss of
generality that our set $S$ is made up of cofibrations $A_i \to B_i$
between cofibrant objects.

\begin{defn}
  Suppose that $\mathcal{M}$ is a simplicial model category, and that
  $f\co A \to B$ is a map. Then the \emph{iterated double mapping
    cylinders} are the maps
  \[
    (B \otimes \partial \Delta^n) \coprod_{A \otimes \partial \Delta^n}
    (A \otimes \Delta^n) \to B \otimes \Delta^n.
  \]
\end{defn}

This definition is rigged so that an object $Y$ has the right lifting
property with respect to the iterated double mapping cylinders if and
only if the map $\Map_{\mathcal{M}}(B,Y) \to \Map_{\mathcal{M}}(A,Y)$
is an acyclic fibration of simplicial sets. One of the
equivalent formulations of the SM7 axioms for a simplicial model
category is that double mapping cylinders are always cofibrations, as
follows.
\begin{prop}
  Suppose that $f\co A \to B$ is a map. If $f$ is a cofibration, then
  the iterated double mapping cylinders are cofibrations. If  $A$ is
  also cofibrant, then the iterated double mapping cylinders have
  cofibrant source.
\end{prop}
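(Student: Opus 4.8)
The plan is to recognize each iterated double mapping cylinder as a pushout-product map and to reduce both assertions to the SM7 axiom. Write $\iota_n\co \partial\Delta^n\hookrightarrow\Delta^n$ for the boundary inclusion of simplicial sets. Unwinding the definition, the displayed map for a fixed $n$ is precisely the pushout-product $f\mathbin{\hat\otimes}\iota_n$, whose domain is the pushout of $B\otimes\partial\Delta^n\leftarrow A\otimes\partial\Delta^n\to A\otimes\Delta^n$ and whose codomain is $B\otimes\Delta^n$. The SM7 axiom, in its pushout-product form, says that for a cofibration $i$ in $\mathcal{M}$ and a monomorphism $j$ of simplicial sets the corner map $i\mathbin{\hat\otimes}j$ is again a cofibration (acyclic if either input is). Since every $\iota_n$ is a monomorphism, the first assertion then follows the instant this identification is in place; the only thing to check is the bookkeeping that the pushout written in the definition really is the domain of $f\mathbin{\hat\otimes}\iota_n$.

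For the second assertion I would show that the source $P=(B\otimes\partial\Delta^n)\coprod_{A\otimes\partial\Delta^n}(A\otimes\Delta^n)$ is cofibrant by exhibiting $\emptyset\to P$ as a composite of cofibrations. First note that $f$ is a cofibration with cofibrant domain $A$, so $B$ is cofibrant as well. Next I would apply SM7 repeatedly with one input taken out of the initial object, using the collapses $A\otimes\emptyset=\emptyset$ and $\emptyset\otimes K=\emptyset$. The corner map of $\emptyset\to A$ against $\emptyset\to K$ degenerates to $\emptyset\to A\otimes K$, so $A\otimes K$ is cofibrant for every simplicial set $K$; in particular $A\otimes\Delta^n$ and $B\otimes\partial\Delta^n$ are cofibrant. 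The corner map of $f\co A\to B$ against $\emptyset\to\partial\Delta^n$ degenerates to $f\otimes\mathrm{id}\co A\otimes\partial\Delta^n\to B\otimes\partial\Delta^n$, which SM7 certifies as a cofibration.

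With these facts assembled I would finish by a single cobase-change step. In the pushout square defining $P$, the map $A\otimes\Delta^n\to P$ is the cobase change of the cofibration $f\otimes\mathrm{id}\co A\otimes\partial\Delta^n\to B\otimes\partial\Delta^n$, hence itself a cofibration; composing with $\emptyset\to A\otimes\Delta^n$, which is a cofibration because $A\otimes\Delta^n$ is cofibrant, shows that $\emptyset\to P$ is a cofibration. (Symmetrically one could push out the cofibration $A\otimes\partial\Delta^n\to A\otimes\Delta^n$ to see that $B\otimes\partial\Delta^n\to P$ is a cofibration and invoke cofibrancy of $B\otimes\partial\Delta^n$.)

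I do not anticipate a genuine obstacle, since the entire content is the SM7 axiom and the remaining work is organizational. The one place to be careful is the repeated reduction of corner maps to ordinary tensors via $\emptyset\otimes K=\emptyset$, together with keeping track of which leg of the pushout square is a cobase change of which cofibration---this is exactly where a careless application of SM7 could quietly assume a cofibrancy that has not yet been established.
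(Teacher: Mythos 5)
Your proof is correct and matches the paper's approach: the paper states this proposition without proof, remarking only that it is an equivalent formulation of the SM7 pushout-product axiom, which is exactly the identification you make. Your explicit verification of the second claim---reducing cofibrancy of the source to the collapses $A\otimes\emptyset=\emptyset$, $\emptyset\otimes K=\emptyset$ and a cobase change of $f\otimes\mathrm{id}$---is sound routine bookkeeping that the paper leaves implicit.
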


\begin{rmk}
  If $\mathcal{M}$ does not have a simplicial model structure, we can
  obtain replacements for these objects by iteratively replacing the
  maps $B \coprod_A B \to B$ with equivalent cofibrations.
\end{rmk}

\begin{defn}
  Suppose that $\mathcal{M}$ is a simplicial model category, that $S$
  is a collection of maps, and that $T$ is the collection of iterated
  double mapping cylinders of maps in $S$. We say that a map in
  $\mathcal{M}$ is an \emph{$S$-cofibration} if it is a cofibration in
  $\mathcal{M}$, and that it is an \emph{$S$-fibration} if it has the
  right lifting property with respect to the maps in $T$. If these
  determine a new model structure $\mathcal{M}'$, we call this the
  \emph{left Bousfield localization with respect to $S$}.
\end{defn}

This gives us two fundamentally different approaches to the process of
constructing a left Bousfield localization. In the first, we may try
to expand our family of weak equivalences to some new family
$\mathcal{W}$; we must then prove that we can construct enough
fibrations and fibrant objects to make the model structure work. In
the second, we may try to start with some collection of maps $S$ which
serve as new ``cells'' to build acyclic cofibrations, and use
them to contract our family of fibrations; we then lose control over
the weak equivalences, and typically must work to prove that
cofibrations which are weak equivalences can be built out of our new
cells.

The most advanced technology available for Bousfield localization is
Jeff Smith's theory of combinatorial model categories.

\begin{defn}
  \label{def:cofibrantlygenerated}
  A model category $\mathcal{M}$ is \emph{cofibrantly generated} if
  there are sets $I$ and $J$ of maps satisfying the following properties:
  \begin{enumerate}
  \item the fibrations in $\mathcal{M}$ are the maps that have the
    right lifting property with respect to $J$;
  \item the acyclic fibrations in $\mathcal{M}$ are the maps that have
    the right lifting property with respect to $I$;
  \item $I$ permits the small object argument, so that from any
    object $X$ we can construct a map $X \to X'$, as a transfinite
    composition of pushouts along coproducts of maps in $I$, that has
    the right lifting property with respect to $I$;
  \item $J$ also permits the small object argument.
  \end{enumerate}
  We refer to $I$ as the set of \emph{generating cofibrations} and to
  $J$ as the set of \emph{generating acyclic cofibrations}
  respectively.
  
  The cofibrantly generated model category is also
  \emph{combinatorial} if it is also locally presentable, meaning
  there exists a regular cardinal $\kappa$ and a set $\mathcal{M}_0$
  of objects satisfying the following properties:
  \begin{enumerate}
  \item any small diagram in $\mathcal{M}$ has a colimit;
  \item for any object $x$ in $\mathcal{M}_0$, the functor
    $\Hom_{\mathcal{M}}(x,-)$ commutes with $\kappa$-filtered
    colimits;
  \item every object in $\mathcal{M}$ is a $\kappa$-filtered colimit
    of objects in $\mathcal{M}_0$.
  \end{enumerate}
\end{defn}

\begin{thm}[Dugger's theorem {\cite{dugger-universal}}]
  Any combinatorial model category is Quillen equivalent to a left
  proper simplicial model category.
\end{thm}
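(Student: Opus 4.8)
The plan is to realize $\mathcal{M}$ as a left Bousfield localization of a category of simplicial presheaves, following Dugger's theory of universal homotopy theories. For a small category $\mathcal{C}$, the functor category $\Fun(\mathcal{C}^{\op}, \mathcal{S})$ of simplicial presheaves carries a \emph{projective} model structure, whose weak equivalences and fibrations are detected objectwise. This model structure is combinatorial, simplicial, and left proper --- left proper because weak equivalences and pushouts are both computed objectwise and $\mathcal{S}$ is left proper. Its key feature is a universal property: left Quillen functors out of $\Fun(\mathcal{C}^{\op}, \mathcal{S})$ into a model category are the same data as ordinary functors out of $\mathcal{C}$.

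First I would use local presentability of $\mathcal{M}$ to extract a small category of generators. By hypothesis there is a regular cardinal $\kappa$ and a set $\mathcal{M}_0$ of $\kappa$-presentable objects such that every object of $\mathcal{M}$ is a $\kappa$-filtered colimit of objects in $\mathcal{M}_0$; after replacing these by cofibrant models, let $\mathcal{C}$ be the resulting small category of cofibrant generators. The universal property then converts the inclusion $\mathcal{C} \hookrightarrow \mathcal{M}$ into a left Quillen functor $\mathrm{Re} \co \Fun(\mathcal{C}^{\op}, \mathcal{S}) \to \mathcal{M}$ carrying each representable presheaf to the corresponding object. Because $\mathcal{M}_0$ generates $\mathcal{M}$ under colimits, $\mathrm{Re}$ is \emph{homotopically surjective}: every object of $\mathcal{M}$ is equivalent to a homotopy colimit of objects of $\mathcal{C}$, so it lies in the essential image of the left derived functor of $\mathrm{Re}$.

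Next I would localize. Choose a \emph{set} $S$ of maps in $\Fun(\mathcal{C}^{\op}, \mathcal{S})$ that $\mathrm{Re}$ sends to weak equivalences and that encode the homotopy-colimit relations among representables realizing the objects of $\mathcal{M}$. Since $\Fun(\mathcal{C}^{\op}, \mathcal{S})$ is combinatorial, Smith's existence theorem provides a left Bousfield localization $\mathcal{N}$ at $S$, again combinatorial and with the same cofibrations. The adjunction $\mathcal{N} \rightleftarrows \mathcal{M}$ is still a Quillen adjunction, and I would verify it is a Quillen \emph{equivalence} via the two derived conditions: the derived counit is an equivalence because $\mathrm{Re}$ is homotopically surjective, while the derived unit becomes an equivalence exactly after localizing at $S$, since the $S$-local fibrant objects are precisely those correctly detected by the right adjoint.

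Finally, the source $\mathcal{N}$ is a left proper simplicial model category: left Bousfield localization leaves the cofibrations unchanged, so left properness of $\Fun(\mathcal{C}^{\op}, \mathcal{S})$ persists, and Hirschhorn's localization machinery preserves the simplicial enrichment. This produces a left proper simplicial model category Quillen equivalent to $\mathcal{M}$. The hard part is the verification at the heart of Dugger's presentation theorem --- that one can choose $S$ as a \emph{set} large enough to force every $S$-local object to be seen correctly by $\mathrm{Re}$, yet small enough for Smith's theorem to apply. This is exactly where combinatoriality is indispensable: local presentability bounds the size of the data presenting $\mathcal{M}$, so that the homotopy-colimit relations defining $S$ form a set, and matching these formal relations against genuine homotopy colimits in $\mathcal{M}$ is the technical core of the argument.
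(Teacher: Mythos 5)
The paper does not prove this theorem---it simply cites Dugger---and your sketch is an accurate reconstruction of Dugger's actual argument in the cited reference: present $\mathcal{M}$ via the universal property of the projective model structure on simplicial presheaves $\Fun(\mathcal{C}^{\op},\mathcal{S})$ over a small category of presentable generators, localize at a set $S$ using Smith's theorem, and observe that left properness and the simplicial enrichment survive left Bousfield localization. One caution: your inference that $\mathrm{Re}$ is homotopically surjective ``because $\mathcal{M}_0$ generates $\mathcal{M}$ under colimits'' is too quick as stated, since plain colimits need not be homotopy colimits; Dugger's proof of this step requires enlarging $\kappa$ so that the generating cofibrations and acyclic cofibrations of $\mathcal{M}$ have $\kappa$-presentable (co)domains, whence $\kappa$-filtered colimits preserve weak equivalences and fibrations and the filtered-colimit presentations of fibrant objects really do compute homotopy colimits---this verification is a second technical core of the argument, on par with the choice of the set $S$ that you correctly flag.
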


\begin{rmk}
  The axioms of a cofibrantly generated model category and a locally
  presentable category have nontrivial overlap. In one direction, the
  model category axioms already ask that $\mathcal{M}$ has all
  colimits. In the other direction, being locally presentable means
  that \emph{every} set of maps admits the small object argument.
\end{rmk}

\begin{exam}
  Simplicial sets are the motivating example of a combinatorial model
  category. Fibrations and acyclic fibrations are defined as having
  the right lifting property with respect to the generating acyclic
  cofibrations $\Lambda^n_i \to \Delta^n$ and the generating
  cofibrations $\partial \Delta^n \to \Delta^n$. The category is also
  locally presentable because it is generated by finite simplicial
  sets. Every simplicial set is the filtered colimit of its finite
  subobjects; there are only countably many isomorphism classes of
  finite simplicial sets; for any finite simplicial set $X$,
  $\Hom(X,-)$ commutes with filtered colimits.
\end{exam}

\begin{thm} [Smith's theorem
  {\cite{beke-sheafifiable,barwick-bousfieldlocalization,lurie-htt}}]
  Suppose that $\mathcal{M}$ is a locally presentable category with a
  family $\mathcal{W}$ of \emph{weak equivalences} and a set $I$ of
  \emph{generating cofibrations}. Call those maps which have the right
  lifting property with respect to $I$ the \emph{acyclic fibrations},
  and those maps which have the left lifting property with respect to
  acyclic fibrations the \emph{cofibrations}. Suppose that we have the
  following:
  \begin{enumerate}
  \item $\mathcal{W}$ satisfies the 2-out-of-3 axiom;
  \item acyclic fibrations are in $\mathcal{W}$;
  \item the class of cofibrations which are in $\mathcal{W}$ is closed
    under pushout and transfinite composition; and
  \item maps in $\mathcal{W}$ are closed under $\kappa$-filtered
    colimits for some regular cardinal $\kappa$, and generated under
    $\kappa$-filtered colimits by some set of maps in $\mathcal{W}$.
  \end{enumerate}
  Then there exists a combinatorial model structure on $\mathcal{M}$
  with set $I$ of generating cofibrations and set $\mathcal{W}$ of
  weak equivalences. This model structure on $\mathcal{M}$ has
  cofibrant and fibrant replacement functors. Moreover, any
  combinatorial model structure arises in this fashion.
\end{thm}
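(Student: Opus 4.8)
The plan is to run the standard recognition argument for combinatorial model structures, in which the hard content is not the verification of the axioms but the production of a \emph{set} of generating acyclic cofibrations. First I would fix the classes: declare the \emph{cofibrations} to be the weakly saturated class generated by $I$ (equivalently, the maps with the left lifting property against the acyclic fibrations), the \emph{weak equivalences} to be $\mathcal{W}$, the \emph{acyclic cofibrations} to be the cofibrations lying in $\mathcal{W}$, and the \emph{fibrations} to be the maps with the right lifting property against all acyclic cofibrations. Since $\mathcal{M}$ is locally presentable, the set $I$ permits the small object argument of \S\ref{sec:small-object}, which factors any map as a cofibration followed by an acyclic fibration. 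Hypothesis (2), together with the fact that cofibrations are by definition the maps lifting against acyclic fibrations, gives at once the easy inclusion: every acyclic fibration is a fibration lying in $\mathcal{W}$.

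The crux, and the step I expect to be the main obstacle, is the claim that there is a \emph{set} $J$ of acyclic cofibrations whose weak saturation is the entire class of acyclic cofibrations. This is where hypothesis (4) enters. Viewed as a full subcategory of the arrow category $\mathcal{M}^{[1]}$, the class $\mathcal{W}$ is \emph{accessible} and accessibly embedded, since closure under $\kappa$-filtered colimits plus generation by a set under such colimits is precisely an accessibility statement; the same is true of the cofibrations, which are the saturation of the set $I$. I would then invoke the theorem of Ad\'amek--Rosick\'y that the intersection of two accessibly embedded accessible subcategories of a locally presentable category is again accessibly embedded and accessible, so that the acyclic cofibrations form such a subcategory of $\mathcal{M}^{[1]}$. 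Because accessible categories are idempotent complete, this subcategory is closed under retracts; combined with hypothesis (3) it is therefore \emph{weakly saturated}. The final ingredient is the standard lemma that a weakly saturated class which is accessible and accessibly embedded is the weak saturation of a set, and this produces the desired $J$. The difficulty is concentrated entirely in these two category-theoretic black boxes: stability of accessibility under intersection, and the passage from ``accessible and weakly saturated'' to ``generated by a set.''

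With $J$ in hand the remainder is bookkeeping. Since the fibrations were defined as the maps lifting against all acyclic cofibrations, and right lifting is unchanged under weak saturation of the left-hand class, the fibrations are exactly the maps with the right lifting property against $J$; the small object argument applied to $J$ then factors any map as an acyclic cofibration followed by a fibration, giving the second factorization. The one remaining nontrivial inclusion, that a fibration lying in $\mathcal{W}$ is an acyclic fibration, follows from the usual retract trick: factor such a map $p$ as a cofibration $i$ followed by an acyclic fibration $q$; two-out-of-three (hypothesis (1)) forces $i \in \mathcal{W}$, so $i$ is an acyclic cofibration, so $p$ lifts against $i$, and the resulting lift exhibits $p$ as a retract of $q$ and hence as an acyclic fibration. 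The retract axiom for weak equivalences is supplied by the idempotent completeness noted above, while cofibrations and fibrations are retract closed because they are defined by lifting; limits and colimits exist by local presentability; and the two lifting axioms hold by the definitions of cofibration and fibration together with the identification of acyclic (co)fibrations just established. This verifies all the model category axioms, and the structure is combinatorial by construction, cofibrantly generated by $I$ and $J$ over the locally presentable category $\mathcal{M}$.

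For the final assertion that every combinatorial model structure arises in this way, I would take $\mathcal{W}$ to be its class of weak equivalences and $I$ a set of generating cofibrations. Hypotheses (1)--(3) are immediate from the model category axioms and the closure properties of acyclic cofibrations. The one genuinely substantial input is hypothesis (4): that the weak equivalences of a combinatorial model category form an accessible, accessibly embedded subcategory of $\mathcal{M}^{[1]}$. This is itself a theorem of Smith (see the references in the statement), and I would cite it rather than reprove it, since its proof is of the same order of difficulty as the accessibility arguments used in the forward direction.
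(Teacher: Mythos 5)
The paper does not actually prove this theorem---it quotes it from \cite{beke-sheafifiable,barwick-bousfieldlocalization,lurie-htt}---so your proposal must be measured against the proofs in those references. Your overall architecture matches theirs: everything reduces to producing a \emph{set} $J$ of generating acyclic cofibrations, after which the assembly is exactly the standard bookkeeping (your identification of the fibrations with the maps in $\mathrm{rlp}(J)$, the retract trick showing a fibration in $\mathcal{W}$ lies in $\mathrm{rlp}(I)$, retract-closure of $\mathcal{W}$ via idempotent completeness of accessible subcategories, and the observation that isomorphisms lie in $\mathcal{W}$ because they are acyclic fibrations are all correct and are how Beke, for instance, proceeds). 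Outsourcing the converse direction---that the weak equivalences of any combinatorial model structure form an accessible, accessibly embedded subcategory of the arrow category---to the literature is also legitimate; that is a genuine theorem of Smith reproved in the cited sources.

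The genuine gap is the crux step, which you bury in a ``standard lemma'' that is not standard: the claim that a weakly saturated class of morphisms in a locally presentable category which is accessible and accessibly embedded in $\mathcal{M}^{[1]}$ is automatically the weak saturation of a set. No such off-the-shelf lemma is available, and in this generality the statement is at best delicate: it is essentially the question of whether accessible weak factorization systems are cofibrantly generated by \emph{sets} of maps, where what is known in general is only generation by a small \emph{category} of maps in Garner's algebraic sense. The obstruction is concrete: weakly saturated classes are closed under pushouts, transfinite compositions, and retracts, but \emph{not} under filtered colimits in the arrow category (lifts against a filtered colimit of maps must be chosen coherently, which is exactly what fails), so writing $f \in \mathrm{cof}(I)\cap\mathcal{W}$ as a $\kappa$-filtered colimit of $\kappa$-presentable maps in the class does not exhibit $f$ as a cellular composite of them. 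Your auxiliary claim that $\mathrm{cof}(I)$ is accessible and accessibly embedded is likewise a nontrivial theorem, not a formal consequence of being the saturation of a set (the dual statement for $\mathrm{rlp}(I)$ \emph{is} easy, but left lifting classes are not). The actual proofs do something more specific precisely to get around this: Beke runs a solution-set ``closing-off'' argument, and Lurie refines a given $I$-cellular presentation of $f$ into a $\kappa$-good transfinite composite of pushouts of $\kappa$-presentable maps lying in $\mathrm{cof}(I)\cap\mathcal{W}$, using hypothesis (3) and the accessibility of $\mathcal{W}$ inside an explicit cell structure. This is the abstract form of the Bousfield--Smith cardinality argument that the paper itself flags, in Remark~\ref{rmk:bousfield-smith}, as \emph{the} key step. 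So you have correctly located where all the difficulty lives, but the two black boxes you invoke to discharge it cannot simply be cited; to repair the proof you should replace them with the closing-off argument on $I$-cellular presentations.
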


\begin{cor}
  Suppose that $\mathcal{M}$ is a combinatorial model category with
  set $I$ of generating cofibrations and class $\mathcal{W}$ of weak
  equivalences. Given a functor $E\co \mathcal{M} \to \mathcal{D}$
  factoring through the homotopy category $h\mathcal{M}$, define a map
  to be an \emph{$E$-equivalence} if its image under $E$ is an
  isomorphism. Then there exists a left Bousfield localization
  $\mathcal{M}_E$, whose equivalences are the $E$-equivalences, if the
  following conditions hold:
  \begin{enumerate}
  \item $E$-equivalence is preserved by transfinite composition along
    cofibrations;
  \item pushouts of $E$-acyclic cofibrations are $E$-equivalences; and
  \item there exists a set of $E$-acyclic cofibrations that generate
    all $E$-acyclic cofibrations under $\kappa$-filtered colimits.
  \end{enumerate}
\end{cor}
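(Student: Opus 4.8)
The plan is to deduce this directly from Smith's theorem, applied to the \emph{same} set $I$ of generating cofibrations together with the class $\mathcal{W}_E$ of $E$-equivalences in place of $\mathcal{W}$. Since a left Bousfield localization must retain the cofibrations, keeping $I$ fixed is exactly right: the acyclic fibrations are then unchanged, and once Smith's theorem produces a combinatorial model structure $\mathcal{M}_E$ with weak equivalences $\mathcal{W}_E$, the facts that it shares the cofibrations of $\mathcal{M}$ and that every original weak equivalence is an $E$-equivalence (because $E$ factors through $h\mathcal{M}$) show that $\mathcal{M}_E$ is a left Bousfield localization of $\mathcal{M}$. So everything reduces to verifying the four hypotheses of Smith's theorem for $\mathcal{W}_E$.

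The first three are quick. Because $E$ factors through $h\mathcal{M}$, a map is an $E$-equivalence precisely when $E$ sends it to an isomorphism, and isomorphisms satisfy the 2-out-of-3 axiom, giving hypothesis (1). The acyclic fibrations are the maps with the right lifting property against $I$; these are unchanged, hence are ordinary weak equivalences, which $E$ inverts, so they lie in $\mathcal{W}_E$ and hypothesis (2) holds. The cofibrations lying in $\mathcal{W}_E$ are exactly the $E$-acyclic cofibrations, and their closure under pushout and under transfinite composition is precisely the content of the corollary's hypotheses (2) and (1) respectively, combined with the corresponding closure of the cofibrations themselves.

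The real work is Smith's hypothesis (4): that $\mathcal{W}_E$ be closed under $\kappa$-filtered colimits and generated under such colimits by a set. Here I would use the corollary's hypothesis (3). Let $J_E$ be the union of the given generating set of $E$-acyclic cofibrations with the generating acyclic cofibrations $J$ of $\mathcal{M}$, and run the small object argument on $J_E$ (permitted, since $\mathcal{M}$ is locally presentable) to obtain a functorial replacement $X \to RX$ whose targets have the right lifting property against $J_E$. Each such map is a transfinite composite of pushouts of maps in $J_E$, so by hypotheses (1) and (2) it is an $E$-equivalence; and since $J_E$ is a \emph{set}, $R$ preserves $\mu$-filtered colimits for some regular cardinal $\mu$. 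The aim is then to identify $\mathcal{W}_E$ with the preimage, under $R$ on the arrow category, of the ordinary weak equivalences $\mathcal{W}$: by 2-out-of-3 a map $f$ is an $E$-equivalence if and only if $Rf$ is, and $\mathcal{W}$ is itself an accessible class because $\mathcal{M}$ is combinatorial (the ``moreover'' clause of Smith's theorem). The preimage of an accessible class under the accessible functor $R$ is again accessible, which yields hypothesis (4).

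The step I expect to be the main obstacle is the one hidden in the previous paragraph: showing that an $E$-equivalence $Rf$ between two $R$-replaced objects is an ordinary weak equivalence, so that $Rf \in \mathcal{W}$ really is equivalent to $f \in \mathcal{W}_E$. The targets of $R$ have the right lifting property against the set $J_E$, and hypothesis (3) should upgrade this to the right lifting property against \emph{every} $E$-acyclic cofibration. Passing to the simplicial setting (via Dugger's theorem, or via framings) this lifting property becomes a statement about derived mapping spaces out of $E$-acyclic cofibrations, from which a Whitehead-style argument produces a homotopy inverse to any $E$-equivalence between such objects. Care is required both in the transition from the set-generated $E$-acyclic cofibrations to the full saturated class and in handling the coherence of the homotopies; this is where the genuine content of the corollary, and of Smith's recognition principle, resides.
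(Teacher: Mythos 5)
Your overall reduction is the same as the paper's: apply Smith's theorem to the same set $I$ of generating cofibrations with the class $\mathcal{W}_E$ of $E$-equivalences as the weak equivalences, and observe that any model structure produced this way is by definition a left Bousfield localization of $\mathcal{M}$. Your verification of the first hypotheses also reproduces the paper's entire written proof: the 2-out-of-3 axiom is automatic because isomorphisms satisfy it; the acyclic fibrations are unchanged and, being weak equivalences of $\mathcal{M}$, are inverted by $E$ since $E$ factors through $h\mathcal{M}$; and closure of the $E$-acyclic cofibrations under pushout and transfinite composition is exactly hypotheses (2) and (1) of the corollary. Up to this point you and the paper agree.

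The divergence---and the genuine gap---is in your treatment of Smith's last hypothesis. The corollary's hypothesis (3) is worded precisely to be fed into the ``generated under $\kappa$-filtered colimits by some set'' clause of Smith's theorem, which the paper quotes as a black box; the solution-set/cardinality argument that makes this sufficient lives \emph{inside} Smith's theorem and is not to be re-proved. You instead attempt to re-derive the accessibility of $\mathcal{W}_E$ via a replacement functor $R$ built by the small object argument on $J_E$, and that argument breaks at exactly the step you flag. Concretely: (i) the right lifting property against the generating set does not upgrade to the right lifting property against every $E$-acyclic cofibration, because lifting properties are inherited only by the weakly saturated closure of a set (pushouts, transfinite composites, retracts), not by $\kappa$-filtered colimits in the arrow category, which is the only kind of generation hypothesis (3) supplies; and (ii) your Whitehead-style argument presupposes that $E$-equivalences are detected by derived mapping spaces into $J_E$-injective objects, but $E$ is an \emph{arbitrary} functor factoring through $h\mathcal{M}$, and the mapping-space characterization of $E$-equivalences is a consequence of the localized model structure you are trying to build, so the sketch is circular. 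The repair is to delete the detour: match hypothesis (3) directly to Smith's fourth condition (using that every $E$-equivalence factors as an $E$-acyclic cofibration followed by an acyclic fibration, and that $\mathcal{W}$ is accessible since $\mathcal{M}$ is combinatorial), at which point the corollary is a formal instance of Smith's theorem---which is all that the paper's two-sentence proof asserts.
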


\begin{proof}
  The 2-out-of-3 axiom is automatic: if two of $E(g)$, $E(f)$ and
  $E(gf) = E(g) E(f)$ are isomorphisms, then so is the third. The
  fact that $E$ factors through the homotopy category automatically
  implies that acyclic fibrations are taken by $E$ to isomorphisms.
\end{proof}

\begin{exam}
  Let $E_*$ be a homology theory on the category of simplicial
  sets. The excision and direct limit axioms for homology imply that
  $E$-equivalences are preserved by homotopy pushouts and transfinite
  compositions. Therefore, the verification that we have a model
  structure is immediately reduced to the core of the Bousfield--Smith
  cardinality argument of Example~\ref{rmk:bousfield-smith}: that
  there is a set of $E$-acyclic cofibrations generating all others
  under filtered colimits.
\end{exam}

The great utility of combinatorial model structures is that they allow
us to \emph{build} new model categories: categories of diagrams and
Bousfield localizations.

\begin{thm}[{\cite[A.2.8.2, A.3.3.2]{lurie-htt}}]
  Suppose that $\mathcal{M}$ is a combinatorial model category and
  that $I$ is a small category. Then there exists a \emph{projective}
  (resp. \emph{injective}) model structure on the functor category
  $\mathcal{M}^I$, where a natural transformation of diagrams is an
  equivalence or fibration (resp. cofibration) if and only if it is an
  objectwise equivalence or fibration (resp. cofibration).

  If $\mathcal{M}$ is a simplicial model category, then the natural
  simplicial enrichment on $\mathcal{M}^I$ makes the injective and
  projective model structures into simplicial model categories.
\end{thm}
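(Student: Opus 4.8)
The plan is to obtain both structures from Smith's theorem, applied to $\mathcal{M}^I$ with the class $\mathcal{W}$ of objectwise weak equivalences. First I would assemble the inputs common to both cases. Since $\mathcal{M}$ is locally presentable and $I$ is small, the diagram category $\mathcal{M}^I$ is again locally presentable, hence complete and cocomplete, and limits and colimits in it are computed objectwise; in particular every set of maps permits the small object argument. The class $\mathcal{W}$ inherits the $2$-out-of-$3$ axiom objectwise, giving condition (1). For condition (4), I would use that in a combinatorial $\mathcal{M}$ the weak equivalences form an accessible subcategory of the arrow category closed under $\kappa$-filtered colimits for some regular $\kappa$; since $\mathcal{W}$ is the intersection over $i\in I$ of the preimages of this class under the evaluation functors $\mathrm{ev}_i\co\mathcal{M}^I \to \mathcal{M}$, it is again accessible, hence closed under and generated by $\kappa$-filtered colimits. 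What remains in each case is to produce a suitable set of generating cofibrations and to verify conditions (2) and (3).

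For the \emph{projective} structure I would proceed by transfer along the evaluation adjunctions. Each $\mathrm{ev}_i$ has a left adjoint $F_i$ (left Kan extension along $\{i\}\hookrightarrow I$), and $F_i$ carries a cofibration to an objectwise cofibration, since $F_i(u)(j)$ is a coproduct of copies of $u$. Taking as generating cofibrations and generating acyclic cofibrations the sets $\{F_i(u) : i\in I,\ u\in\mathcal{I}\}$ and $\{F_i(v) : i\in I,\ v\in\mathcal{J}\}$ built from the generating (acyclic) cofibrations $\mathcal{I}$, $\mathcal{J}$ of $\mathcal{M}$, an adjunction argument shows a map has the right lifting property against these sets exactly when it is, respectively, an objectwise acyclic fibration or an objectwise fibration. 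This gives condition (2) immediately, and the fibrations come out objectwise. The only nonformal point is condition (3): that the relative cell complexes of the $F_i(v)$ lie in $\mathcal{W}$. This holds because such maps are transfinite compositions of pushouts of objectwise acyclic cofibrations, and objectwise acyclic cofibrations are closed under pushout and transfinite composition, these being computed objectwise where the closure properties hold in $\mathcal{M}$.

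The \emph{injective} structure is where the real difficulty lies, because there is no evident small set generating the objectwise cofibrations. Here I would declare the cofibrations to be the objectwise cofibrations and must exhibit a set $\mathcal{I}_{\mathrm{inj}}$ of objectwise cofibrations whose saturation is all objectwise cofibrations; equivalently, whose right-lifting class is exactly the objectwise acyclic fibrations. This is the presentable form of the Bousfield--Smith cardinality argument of Remark~\ref{rmk:bousfield-smith}: the objectwise cofibrations form an accessibly embedded accessible subcategory of the arrow category of $\mathcal{M}^I$, and such a class is generated under the saturation operations by the subset of its members whose domain and codomain are $\kappa$-presentable for a suitable regular $\kappa$. \textbf{This set-theoretic step is the main obstacle.} Granting $\mathcal{I}_{\mathrm{inj}}$, condition (2) follows since right lifting against $\mathcal{I}_{\mathrm{inj}}$ equals right lifting against all objectwise cofibrations, namely being an objectwise acyclic fibration, hence a member of $\mathcal{W}$; and condition (3) holds exactly as in the projective case. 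Smith's theorem then yields the injective model structure, complete with cofibrant and fibrant replacement functors.

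Finally, for the simplicial enrichment when $\mathcal{M}$ is a simplicial model category, I would equip $\mathcal{M}^I$ with the objectwise tensor $(F\otimes K)(i)=F(i)\otimes K$, the objectwise cotensor $(F^K)(i)=F(i)^K$, and the mapping space given by the end
\[
  \Map_{\mathcal{M}^I}(F,G)=\textstyle\int_{i\in I}\Map_{\mathcal{M}}(F(i),G(i)),
\]
realized as the equalizer of the two evident maps $\prod_i\Map_{\mathcal{M}}(F(i),G(i))\rightrightarrows\prod_{i\to j}\Map_{\mathcal{M}}(F(i),G(j))$. Verifying the pushout--product (SM7) axiom then reduces to the corresponding axiom in $\mathcal{M}$. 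For the injective structure the cofibrations, weak equivalences, and tensoring are all objectwise, so SM7 is checked objectwise. For the projective structure it suffices to check SM7 on the generating cofibrations, and the pushout--product of $F_i(u)$ with a boundary inclusion $\partial\Delta^n\to\Delta^n$ is $F_i$ applied to the pushout--product of $u$ with that inclusion, which is an (acyclic) cofibration in $\mathcal{M}$ by SM7 there; so SM7 holds on generators and hence in general.
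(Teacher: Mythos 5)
The paper does not prove this theorem at all---it is quoted verbatim from Lurie with a citation to \cite[A.2.8.2, A.3.3.2]{lurie-htt}---so there is no in-paper proof to compare against; your proposal is, in outline, a reconstruction of the standard argument from the cited source. The projective half is complete and correct: transfer along the adjunctions $F_i \dashv \mathrm{ev}_i$, with the observation that $F_i(u)(j) = \coprod_{\Hom_I(i,j)} u$ is an objectwise (acyclic) cofibration, does identify the fibrations and acyclic fibrations objectwise and verifies the acyclicity condition objectwise. The simplicial enrichment argument is also right, including the key computation $F_i(u) \boxtimes k \cong F_i(u \boxtimes k)$ that reduces SM7 for the projective structure to SM7 in $\mathcal{M}$ on generators.

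Two points in the injective half deserve comment. First, the step you flag in bold is indeed the entire mathematical content of the injective case, and as written it is an appeal to a nontrivial lemma rather than a proof: that an accessible, accessibly embedded, weakly saturated class of morphisms in a presentable category is generated by a small set is exactly the cardinality argument that Lurie proves (and that Remark~\ref{rmk:bousfield-smith} describes in Bousfield's original setting). Granting it, the rest of your argument does go through, so your proposal correctly isolates where the work lives, but a referee would say the theorem has been reduced to, not deduced from, first principles. Second, your parenthetical ``equivalently, whose right-lifting class is exactly the objectwise acyclic fibrations'' is false: the class of maps with the right lifting property against \emph{all} objectwise cofibrations is strictly smaller than the class of objectwise acyclic fibrations. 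For instance, with $I = [1]$ in simplicial sets, the map from $(\Delta^0 \too{0} \Delta^1)$ to $(\Delta^0 = \Delta^0)$ is an objectwise acyclic fibration, but it has no lift against the objectwise cofibration from $(\emptyset \to \Delta^0)$ to $(\Delta^0 = \Delta^0)$ when the top map selects the vertex $1$ of $\Delta^1$: naturality of the lift would force $0 = 1$. Fortunately only the true direction of your claimed equivalence is used---maps with the right lifting property against the objectwise cofibrations are objectwise acyclic fibrations (test via $F_i(u)$), hence lie in $\mathcal{W}$, which is all that Smith's condition (2) requires---so this misstatement does not damage the proof, but it should be corrected: injective acyclic fibrations form a proper subclass of objectwise acyclic fibrations in general, just as injective fibrations are strictly rarer than objectwise fibrations.
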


\begin{thm}[{\cite[A.3.7.3]{lurie-htt}}]
  Suppose that $\mathcal{M}$ is a left proper combinatorial simplicial
  model category and that $S$ is a set of cofibrations in
  $\mathcal{M}$. Let $S^{-1} \mathcal{M}$ have the same underlying
  category as $\mathcal{M}$ and the same cofibrations, but with weak
  equivalences the $S$-equivalences.

  Then $S^{-1} \mathcal{M}$ has the structure of a left proper
  combinatorial model category, whose fibrant objects are precisely
  the $S$-local fibrant objects of $\mathcal{M}$.
\end{thm}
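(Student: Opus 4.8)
The plan is to obtain $S^{-1}\mathcal{M}$ directly from Smith's theorem, taking the class $\mathcal{W}$ of weak equivalences to be the $S$-equivalences and retaining the generating cofibrations $I$ of $\mathcal{M}$. Since the maps with the right lifting property against $I$ are exactly the acyclic fibrations of $\mathcal{M}$, the maps with the left lifting property against these---the ``cofibrations'' in the sense of Smith's theorem---are precisely the cofibrations of $\mathcal{M}$, as the statement requires. It then remains to verify the four hypotheses of Smith's theorem for the pair $(\mathcal{W},I)$, after which the theorem produces a combinatorial model structure with the prescribed cofibrations and with the $S$-equivalences as weak equivalences, equipped with fibrant and cofibrant replacement functors.

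The first three hypotheses are quick consequences of \S\ref{sec:mappingspaces} together with left properness. The $2$-out-of-$3$ axiom for $S$-equivalences is Proposition~\ref{prop:saturation-omnibus}(6). For the second hypothesis, any weak equivalence of $\mathcal{M}$ is automatically an $S$-equivalence---for fibrant $Y$ it induces a weak equivalence on $\Map_{\mathcal{M}}(-,Y)$---so in particular the acyclic fibrations lie in $\mathcal{W}$. For the third, I must show that cofibrations which are $S$-equivalences are closed under pushout and transfinite composition. Cofibrations are stable under pushout, and since $\mathcal{M}$ is left proper the pushout of a cofibration is a homotopy pushout, so Proposition~\ref{prop:saturation-omnibus}(5) shows the pushout of an $S$-acyclic cofibration is again an $S$-equivalence. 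Likewise, a transfinite composite of cofibrations is a cofibration and is the homotopy colimit of its partial composites $X_0 \to X_\alpha$, each of which is an $S$-equivalence by induction and the $2$-out-of-$3$ axiom; hence Proposition~\ref{prop:saturation-omnibus}(4) shows it too is an $S$-equivalence.

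The fourth hypothesis is the crux, and the only place where genuine work is needed: I must produce a regular cardinal $\kappa$ and a \emph{set} $J_S$ of $S$-acyclic cofibrations that generates all $S$-acyclic cofibrations under $\kappa$-filtered colimits. This is exactly the Bousfield--Smith cardinality argument sketched in Remark~\ref{rmk:bousfield-smith}. Concretely, I would choose $\kappa$ large enough that $\mathcal{M}$ is $\kappa$-accessible---every object is a $\kappa$-filtered colimit of $\kappa$-presentable objects, of which there is only a set of isomorphism classes---and that the domains and codomains of the maps in $S$ are $\kappa$-presentable. The heart of the matter is an enlargement lemma: given any $S$-acyclic cofibration $A \to B$ and any $\kappa$-presentable subobject $B_0 \subseteq B$, one can find a $\kappa$-presentable subobject $B' \subseteq B$ with $B_0 \subseteq B'$ such that the induced map $A \cap B' \to B'$ (the pullback $A\times_B B' \to B'$) is again an $S$-acyclic cofibration. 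This is the general counterpart of the subcomplex statement of Remark~\ref{rmk:bousfield-smith}, and it is precisely where the accessibility of the class of $S$-equivalences, and a real cardinality count, must be established. Granting the lemma, $B$ is the $\kappa$-filtered colimit of such subobjects $B'$, exhibiting $A \to B$ as a $\kappa$-filtered colimit of $S$-acyclic cofibrations between $\kappa$-presentable objects; taking $J_S$ to be a set of representatives for the latter completes the verification.

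Finally I would record the two remaining assertions. Left properness of $S^{-1}\mathcal{M}$ follows by the argument already used: a pushout of an $S$-equivalence along a cofibration is a homotopy pushout by left properness of $\mathcal{M}$, hence an $S$-equivalence by Proposition~\ref{prop:saturation-omnibus}(5). For the fibrant objects, note that $Y$ is fibrant in $S^{-1}\mathcal{M}$ precisely when it has the right lifting property against every $S$-acyclic cofibration. If $Y$ is fibrant in $\mathcal{M}$ and $S$-local, then for any $S$-acyclic cofibration $j\co A \to B$ the map $\Map_{\mathcal{M}}(B,Y) \to \Map_{\mathcal{M}}(A,Y)$ is a weak equivalence (by $S$-locality, as $j$ is an $S$-equivalence) and a fibration (by SM7, as $j$ is a cofibration and $Y$ is fibrant), hence an acyclic fibration, which supplies the required lift. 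Conversely, if $Y$ is fibrant in $S^{-1}\mathcal{M}$, then since both the acyclic cofibrations of $\mathcal{M}$ and the iterated double mapping cylinders of maps in $S$ are $S$-acyclic cofibrations, lifting against the former shows $Y$ is fibrant in $\mathcal{M}$, while lifting against the latter forces $\Map_{\mathcal{M}}(B,Y) \to \Map_{\mathcal{M}}(A,Y)$ to be an acyclic fibration, in particular a weak equivalence, for each $f\co A \to B$ in $S$; thus $Y$ is $S$-local by the lifting criterion of \S\ref{sec:lifting}. The expected main obstacle is the enlargement lemma underlying hypothesis four, as everything else reduces to the saturation properties already proved.
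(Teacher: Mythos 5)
Note first that the paper does not actually prove this theorem: it is quoted from \cite[A.3.7.3]{lurie-htt}, and the surrounding text (Smith's theorem, its corollary, and Remark~\ref{rmk:bousfield-smith}) only sketches the reduction you carry out. Your outline matches that sketch precisely, and your first three verifications are essentially correct: 2-out-of-3 is Proposition~\ref{prop:saturation-omnibus}(6), weak equivalences of $\mathcal{M}$ are $S$-equivalences on derived mapping spaces, and left properness makes pushouts along cofibrations into homotopy pushouts so that Proposition~\ref{prop:saturation-omnibus}(4,5) applies (for the transfinite step you should also say why the colimit computes the homotopy colimit when the objects are not cofibrant, but this is standard in the left proper combinatorial setting). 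The genuine gap is exactly where you place it, but it is worse than ``unproven'': your enlargement lemma is formulated in terms that a general left proper combinatorial simplicial model category does not support. In such a category, (i) an object is a $\kappa$-filtered colimit of $\kappa$-presentable objects but not, in general, of $\kappa$-presentable \emph{subobjects}; (ii) cofibrations need not be monomorphisms, and the pullback $A \times_B B' \to B'$ of a cofibration need not be a cofibration, so ``$A \cap B'$'' does not have the properties your argument needs; and (iii) even granting these, one must exhibit $A \to B$ as a $\kappa$-filtered colimit \emph{in the arrow category} of the restricted maps, which requires compatible canonical maps that subobject intersections provide for simplicial sets but not in general. These obstructions are exactly why Hirschhorn's version of the Bousfield--Smith argument is run in \emph{cellular} model categories, where cells admit a good theory of subcomplexes and intersections; your proof as written is a proof for simplicial sets (Bousfield's original setting) or for cellular $\mathcal{M}$, not for combinatorial $\mathcal{M}$.

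The proof that the theorem's citation points to handles hypothesis four by a different mechanism, and you should know it as the standard repair: from $S$ one forms the set of ``$S$-horns'' (the iterated double mapping cylinders, i.e.\ pushout-products of maps in $S$ with the inclusions $\partial\Delta^n \to \Delta^n$), runs the small object argument on this set to produce an \emph{accessible} endofunctor $T$ serving as a candidate local fibrant replacement, proves that $f$ is an $S$-equivalence if and only if $T(f)$ is a weak equivalence of $\mathcal{M}$, and then invokes the fact that the weak equivalences of a combinatorial model category form an accessible class, so their preimage under the accessible functor $T$ is again accessible---which is precisely what Smith's fourth hypothesis demands. No subobject combinatorics is needed. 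One smaller repair to record: in your identification of the fibrant objects, the forward direction uses $S$-locality of $Y$ to conclude that $\Map_{\mathcal{M}}(B,Y) \to \Map_{\mathcal{M}}(A,Y)$ is a weak equivalence for \emph{every} $S$-acyclic cofibration $j\co A \to B$; but locality controls only derived mapping spaces, and when $A$ and $B$ are not cofibrant the strict simplicial mapping spaces need not compute them. This is fixed either by checking lifting only against a generating set of $S$-acyclic cofibrations arranged to have cofibrant domains and codomains, or by a cosimplicial-resolution argument as in Hirschhorn. With those two repairs---the accessibility argument replacing the enlargement lemma, and the cofibrancy hygiene in the fibrancy criterion---your skeleton becomes the standard proof.
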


% you've botched this. let it be
% 
% \begin{exam}[{\cite{jardine-simplicialpresheaf}}]
%   Let $\mathcal{C}$ be a small category with a Grothendieck topology,
%   and let ${\rm Pre}(\mathcal{C})$ be the category of presheaves of
%   simplicial sets on $\mathcal{C}$. Then the category of presheaves
%   inherits projective and injective model structures. Given a
%   presheaf $F$ on $\mathcal{C}$ and a cover $U \to V$, we say that $F$
%   \emph{satisfies homotopy \v Cech descent} for this cover if the
%   associated map
%   \[
%     \holim (\dots \leftarrow F(U \times_V U \times_V U) \leftarrow F(U
%     \times_V U \times_V U) \leftarrow \to F(U))  \leftarrow F(V)
%   \]
%   is a weak equivalence.

%   We can rephrase this by taking the \v Cech nerve
%   \[
%     \{\dots \to U \times_V U \times_V U \to U \times_V U \times_V U
%     \to U\} \to V
%   \]
%   and view it as a map $f\co \v Cech(U \to V) \to V$ from a simplicial
%   presheaf to a constant simplicial presheaf: then a sheaf $F$
%   satisfies homotopy \v Cech descent for this cover if and only if it
%   it is $f$-local.
% \end{exam}

\section{Presentable $\infty$-categories}
\label{sec:infty-categories}

Bousfield localization for model categories has the useful property
that it \emph{keeps the category in place} and merely changes the
equivalences. One cost is that making localization canonical or
extending monoidal structures to localized objects takes hard work. By
contrast, localization for $\infty$-categories has the useful property
that it is genuinely \emph{defined by a universal property},
automatically making localization canonical and making it much easier
to extend a monoidal structure to local objects without rectifying
structure. Of course, this comes at the cost of coming to grips with
coherent category theory itself.

The homotopy theory of presentable $\infty$-categories is equivalent,
in a precise sense, to the homotopy theory of combinatorial model
categories \cite[A.3.7.6]{lurie-htt}. However, by contrast with our
techniques for Bousfield localization using model categories and
fibrant replacement functors, it allows us to rephrase some of our
localization techniques in a way that connects more directly with the
homotopical techniques that we originally used in
\S\ref{sec:lifting}.

In this section, we will let $\mathcal{C}$ be an $\infty$-category in
the sense of \cite{lurie-htt}. It is outside our scope to give a
technically correct discussion of these. However, the study of
$\infty$-categories is equivalent to the study of categories with
morphism spaces, and where possible we will attempt to make connection
with classical techniques. With this in mind, if $\mathcal{C}$ is an
enriched category we will say that a \emph{coherent diagram}
$I \to \mathcal{C}$ is a coherent functor in the sense of Vogt
\cite{vogt-hocolim}. This is equivalent to either the notion of a
functor $\mf C[I] \to \mathcal{C}$ from a certain simplicially
enriched category or to the notion of a functor $I \to N\mathcal{C}$
of simplicial sets to the coherent nerve in the sense of
\cite{lurie-htt}. As before a \emph{homotopy colimit} for such a
diagram is based on classical homotopy limits and colimits in spaces,
and is characterized by having natural weak equivalences
\[
  \Map_{\mathcal{C}}(\hocolim_I F(i), Y) \simeq \holim_I
  \Map_{\mathcal{C}}(F(i),Y).
\]

\begin{defn}[{\cite[5.5.1.1]{lurie-htt}}]
  An $\infty$-category $\mathcal{C}$ is \emph{presentable} if there
  there exists a regular cardinal $\kappa$ and a set $\mathcal{C}_0$
  of objects satisfying the following properties:
  \begin{enumerate}
  \item any small diagram in $\mathcal{C}$ has a homotopy colimit;
  \item for any object $x$ in $\mathcal{C}_0$, the functor
    $\Hom_{\mathcal{C}}(x,-)$ commutes with $\kappa$-filtered
    homotopy colimits;
  \item every object in $\mathcal{C}$ is a $\kappa$-filtered homotopy
    colimit of objects in $\mathcal{C}_0$.
  \end{enumerate}
\end{defn}
This definition is precisely parallel to the definition of local
presentability in an ordinary category (see
Definition~\ref{def:cofibrantlygenerated}). In essence, $\mathcal{C}$
is a large category that is formally generated under colimits by a
small category.

Given such an $\infty$-category $\mathcal{C}$ and a collection $S$ of
morphisms in $\mathcal{C}$, it makes sense to define the $S$-local
objects and $S$-equivalences just as in \S\ref{sec:mappingspaces}: an
object $Y$ is $S$-local if and only if the mapping spaces
$\Map_{\mathcal{C}}(-,Y)$ take maps in $S$ to equivalences of spaces.

\begin{defn}[{\cite[5.5.4.5]{lurie-htt}}]
  Suppose that $\mathcal{C}$ is an $\infty$-category with small
  colimits and that $\mathcal{W}$ is a collection of maps in
  $\mathcal{C}$. We say that $\mathcal{W}$ is \emph{strongly
    saturated} if it satisfies the following conditions:
  \begin{enumerate}
  \item given a homotopy pushout diagram
    \[
      \xymatrix{
        C \ar[r]^f \ar[d] & D \ar[d] \\
        C' \ar[r]_{f'} & D',
      }
    \]
    if $f$ is in $\mathcal{W}$ then so is $f'$;
  \item the class $\mathcal{W}$ is closed under homotopy colimits;
  \item the class $\mathcal{W}$ is closed under equivalence, and its
    image in the homotopy category satisfies the 2-out-of-3 axiom.
  \end{enumerate}
\end{defn}

\begin{prop}[{\cite[5.5.4.7]{lurie-htt}}]
  Given a set $S$ of morphisms in $\mathcal{C}$, there is a
  smallest saturated class of morphisms containing $S$. We denote this
  as $\bar S$. If $\mathcal{W} = \bar S$ for some set $S$, then we say
  that $\mathcal{W}$ is \emph{of small generation.}
\end{prop}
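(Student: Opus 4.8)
The plan is to construct $\bar S$ as the intersection of all strongly saturated classes of morphisms that contain $S$, and to verify that this intersection is itself strongly saturated. The three defining conditions of a strongly saturated class---closure under cobase change (condition 1), closure under homotopy colimits (condition 2), and closure under equivalence together with the 2-out-of-3 axiom (condition 3)---are all \emph{closure} properties, and every closure property of this kind is automatically stable under arbitrary intersection.

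First I would check that if $\{\mathcal{W}_i\}$ is any family of strongly saturated classes, then $\mathcal{W} = \bigcap_i \mathcal{W}_i$ is strongly saturated. For condition 1, suppose $f$ lies in $\mathcal{W}$ and we are given a homotopy pushout square having $f$ as one edge; since $f \in \mathcal{W}_i$ for every $i$ and each $\mathcal{W}_i$ satisfies condition 1, the opposite edge $f'$ lies in every $\mathcal{W}_i$, hence in $\mathcal{W}$. For condition 2, a homotopy colimit of a diagram of morphisms is computed in the arrow category $\mathcal{C}^{\Delta^1}$ independently of any ambient class, so if all the morphisms of the diagram lie in $\mathcal{W}$ they lie in each $\mathcal{W}_i$, and the colimiting morphism lies in each $\mathcal{W}_i$ and therefore in $\mathcal{W}$; this uses that $\mathcal{C}$, and hence $\mathcal{C}^{\Delta^1}$, has the relevant small colimits. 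Condition 3 is verified identically: closure under equivalence and the 2-out-of-3 relation are preserved because membership in $\mathcal{W}$ means membership in all of the $\mathcal{W}_i$ simultaneously.

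Next I would observe that the class of \emph{all} morphisms of $\mathcal{C}$ is strongly saturated---the three conditions hold vacuously---and that it contains $S$, so the family of strongly saturated classes containing $S$ is nonempty. I can therefore define $\bar S$ to be the intersection of all such classes. By the previous paragraph $\bar S$ is strongly saturated; it contains $S$ because each member of the intersection does; and by construction it is contained in every strongly saturated class that contains $S$. This is exactly the assertion that $\bar S$ is the smallest strongly saturated class containing $S$, and the second sentence of the statement is merely the definition of \emph{small generation}.

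The only point requiring real care is set-theoretic: strongly saturated classes are in general proper classes, so one cannot literally assemble them into a set and take a set-indexed intersection. Instead I would define $\bar S$ by the predicate ``$f \in \bar S$ if and only if $f$ belongs to every strongly saturated class containing $S$'', which specifies a definable collection of morphisms without collecting the classes into a single set; within the conventions of this paper, where a collection may be a proper class, this is unproblematic. It is worth emphasizing that the small object argument plays no role in this existence statement---it enters only when one wishes to identify $\bar S$ concretely (for instance with the class of $S$-equivalences) or to exploit the small generation property defined here.
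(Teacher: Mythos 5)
Your proof is correct and is essentially the argument of the cited source \cite[5.5.4.7]{lurie-htt} (the paper itself offers no proof, deferring to Lurie): an arbitrary intersection of strongly saturated classes is strongly saturated because all three defining conditions are closure conditions, and the class of all morphisms gives a nonempty family to intersect. Your set-theoretic remark, defining $\bar S$ by a membership predicate rather than a class-indexed intersection, and your observation that the small object argument is irrelevant to this existence statement, are both apt.
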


\begin{exam}
  Suppose that $E\co \mathcal{C} \to \mathcal{C}'$ is a functor of
  $\infty$-categories that preserves homotopy colimits. Then the set
  $\mathcal{W}^E$ of maps in $\mathcal{C}$ that map to equivalences
  is strongly saturated.
\end{exam}

The presentability axioms for an $\infty$-category provide a
homotopical version of what we needed to construct localizations by
ensuring that the small object argument goes through. As a result, we
obtain a result on the existence of Bousfield localizations for
presentable $\infty$-categories.
\begin{thm}[{\cite[5.5.4.15]{lurie-htt}}]
  Let $\mathcal{C}$ be a presentable $\infty$-category and $S$ a set
  of morphisms in $\mathcal{C}$, generating the saturated class $\bar
  S$. Let $L^S\mathcal{C}$ be the full subcategory of $S$-local
  objects. Then the following hold:
  \begin{enumerate}
  \item for every object $C \in \mathcal{C}$, there is a map $C \to
    C'$ in $\bar S$ such that $C'$ is $S$-local;
  \item the $\infty$-category $L^S\mathcal{C}$ is presentable;
  \item the inclusion $L^S\mathcal{C} \to \mathcal{C}$ has a (homotopical)
    left adjoint $L$;
  \item the class of $S$-equivalences coincides with both the
    saturated class $\bar S$ and the set of maps taken to equivalences
    by $L$.
  \end{enumerate}
\end{thm}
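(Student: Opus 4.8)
The plan is to treat part (1)---the existence of a local reflection---as the crux, since the remaining parts follow formally once localization maps are in hand. The first move is to convert the mapping-space criterion for $S$-locality into a lifting criterion, exactly as in \S\ref{sec:lifting}. Adjoining to $S$ all iterated double mapping cylinders of its members yields a \emph{set} $T$: it is a set because $S$ is, and each cylinder is a finite homotopy colimit of the sources and targets of $S$. By the lifting corollary of \S\ref{sec:lifting}, an object $Y$ is $S$-local if and only if, for every $(A \to B)$ in $T$, each map out of $A$ extends, up to homotopy, over $B$. Moreover each double mapping cylinder of a map in $S$ lies in the saturated class $\bar S$, since it is assembled from $S$ by homotopy pushouts and $\bar S$ is closed under these; hence all of $T$ lies in $\bar S$.

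With this reformulation, I would run the small object argument of \S\ref{sec:small-object}. Starting from $C_0 = C$, at each successor stage I take the homotopy pushout of $\coprod_i B_i \leftarrow \coprod_i A_i \to C_\alpha$ over all unsolved lifting problems $A_i \to C_\alpha$ with $(A_i \to B_i) \in T$, and at limit stages I pass to the homotopy colimit. As each generating map lies in $\bar S$, every structure map $C_\alpha \to C_{\alpha+1}$ is a pushout of a map in $\bar S$ and so lies in $\bar S$; and because $\bar S$ is closed under homotopy colimits, the transfinite composite $C \to C_\beta$ stays in $\bar S$. \textbf{The main obstacle is termination}: one must show the process stabilizes at a local object, and this is exactly where presentability enters. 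I would fix a regular cardinal $\kappa$ for which every source $A_i$ occurring in $T$ is $\kappa$-compact---possible because $T$ is a set and the presentability axioms furnish such a $\kappa$---and run the construction to length $\kappa$, setting $C' = C_\kappa = \hocolim_{\alpha<\kappa} C_\alpha$. Since this is a $\kappa$-filtered colimit, any map $A_i \to C'$ factors through some earlier $C_\alpha$, whose posed lifting problem was solved one stage later; thus every lifting problem at $C'$ already has a solution, so $C'$ is $S$-local and $C \to C'$ lies in $\bar S$. This proves (1). Here the $\infty$-categorical setting pays off, as the coherence obstructing functoriality in the bare enriched setting---see the rectification remark in \S\ref{sec:mappingspaces}---is absorbed automatically.

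Given (1), parts (3) and (4) are formal. For (3), the essential uniqueness of localizations turns $C \mapsto LC := C'$ into a functor left adjoint to the inclusion $L^S\mathcal{C} \to \mathcal{C}$, via the natural equivalence $\Map_{\mathcal{C}}(LC, Y) \simeq \Map_{\mathcal{C}}(C, Y)$ for local $Y$. For (4), I would prove the three-way equality by a cycle of inclusions. First, the class of $S$-equivalences is strongly saturated---closed under homotopy colimits, homotopy pushout, $2$-out-of-$3$, and equivalence by Proposition~\ref{prop:saturation-omnibus}---and contains $S$, so it contains the smallest such class $\bar S$. Next, any $S$-equivalence $f\co A \to B$ is inverted by $L$: for every local $Y$ the map $\Map_{\mathcal{C}}(LB,Y)\to\Map_{\mathcal{C}}(LA,Y)$ is identified (using that the units are $S$-equivalences and $Y$ is local) with $\Map_{\mathcal{C}}(B,Y)\to\Map_{\mathcal{C}}(A,Y)$, an equivalence; Yoneda in $L^S\mathcal{C}$ then forces $Lf$ to be an equivalence. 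Finally, if $Lf$ is an equivalence, the naturality relation $\eta_B f = (Lf)\,\eta_A$, the membership $\eta_A,\eta_B \in \bar S$ from (1), the fact that every strongly saturated class contains the equivalences, and $2$-out-of-$3$ in $\bar S$ together force $f \in \bar S$. The three classes therefore coincide.

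Part (2) I would deduce from the standard fact that an accessible reflective localization of a presentable $\infty$-category is again presentable. Because $S$ is a set, the reflector $L$ commutes with $\kappa$-filtered homotopy colimits for a suitable $\kappa$, so $L^S\mathcal{C}$ is closed under such colimits in $\mathcal{C}$, admits all small colimits (computed by applying $L$ to colimits in $\mathcal{C}$), and is generated under colimits by the $L$-images of a set of generators. I expect no genuine difficulty here beyond cardinal bookkeeping, all of which is already licensed by the presentability hypothesis invoked in part (1).
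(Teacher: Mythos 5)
Your proposal is correct, and it is essentially the argument the paper intends: the paper itself states this theorem without proof, citing \cite[5.5.4.15]{lurie-htt}, but the sentence immediately preceding it (``the presentability axioms\dots ensure the small object argument goes through'') together with the lifting criterion of \S\ref{sec:lifting} and the transfinite construction of \S\ref{sec:small-object} is precisely the strategy you execute, with presentability supplying the $\kappa$-compactness needed for termination and the formal deductions of (2)--(4) matching the surrounding discussion (e.g.\ the remark realizing $L$ via a section of $Loc^S(\mathcal{C}) \to \mathcal{C}$). The only caveat worth recording is that Lurie's own proof in the cited reference proceeds somewhat differently (reducing to combinatorial simplicial model categories and the model-categorical localization theorem), but relative to this paper's presentation your route is the natural and intended one.
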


\begin{rmk}
  The homotopical left adjoint can be rephrased as follows. If we
  write $Loc^S(\mathcal{C})$ for the category of $S$-localizations $C
  \to C'$, then the forgetful functor
  \[
    Loc^S(\mathcal{C}) \to \mathcal{C},
  \]
  sending $(C \to C')$ to $C$, is an equivalence of categories (in
  fact, a trivial fibration of quasicategories). By choosing a
  section, given by $C \mapsto (C \to LC)$, we obtain a localization
  functor $L$.
\end{rmk}

As in the case of Bousfield localization of combinatorial model
categories, this connects the two approaches to Bousfield
localization. We can start with a set $S$ of generating equivalences
and construct localizations from those, so for a given class
$\mathcal{W}$ of weak equivalences we are reduced to showing that
$\mathcal{W}$ is generated by a set $S$ of maps. Moreover, if the maps
in $S$ all happen to be in a particular saturated class, then so are
the maps in $\mathcal{W}$.

\section{Multiplicative properties}

Many of the categories where we carry out Bousfield localization have
monoidal structures, and under good circumstances localization is
compatible with them. In this section we will briefly discuss the
circumstances under which this is true.

\subsection{Enriched monoidal structures}

In order to begin to work with these definitions, we need a monoidal
or symmetric monoidal structure on $\mathcal{C}$ that respects
morphism spaces.

\begin{defn}
  Suppose $\mathcal{C}$ is a category enriched in spaces. The
  structure of an \emph{enriched monoidal category} on $\mathcal{C}$
  consists of a functor
  $\otimes\co \mathcal{C} \times \mathcal{C} \to \mathcal{C}$ of
  enriched categories, a unit object $\mb I$ of $\mathcal{C}$,
  and natural associativity and commutativity isomorphisms that
  satisfy the axioms for a monoidal category.

  A compatible symmetric monoidal structure on $\mathcal{C}$ is
  defined similarly.
\end{defn}

Throughout this section we will fix such an enriched monoidal
category $\mathcal{C}$.

\begin{defn}
  Suppose that $S$ is a class of morphisms in $\mathcal{C}$. We say
  that $S$-equivalences are \emph{compatible with the monoidal
    structure} (or simply that $S$ is compatible) if, for any
  $S$-equivalence $f\co Y \to Y'$ and any object $X \in \mathcal{C}$,
  the maps $id_X \otimes f$ and $f \otimes id_X$ are $S$-equivalences.
\end{defn}

\begin{prop}
  Suppose that $S$ is compatible with the monoidal structure. Then
  localization respects the monoidal structure: any choices of
  localization give an equivalence
  \[
    L(X_1 \otimes \dots \otimes X_n) \to
    L(LX_1 \otimes \dots \otimes LX_n).
  \]
\end{prop}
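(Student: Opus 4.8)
The plan is to reduce the statement to the uniqueness of $S$-localizations. The key observation is that the natural comparison map $\phi\co X_1 \otimes \dots \otimes X_n \to LX_1 \otimes \dots \otimes LX_n$, obtained by tensoring together the localization maps $X_i \to LX_i$, is itself an $S$-equivalence. Granting this, both $X_1 \otimes \dots \otimes X_n \to L(X_1 \otimes \dots \otimes X_n)$ and the composite of $\phi$ with the localization of its target will be $S$-localizations of the same object, and the asserted equivalence is then forced.

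First I would establish that $\phi$ is an $S$-equivalence by factoring it as a composite of maps that replace one tensor factor at a time:
\[
X_1 \otimes \dots \otimes X_n \to LX_1 \otimes X_2 \otimes \dots \otimes X_n \to \dots \to LX_1 \otimes \dots \otimes LX_n,
\]
where the map in position $i$ is $\mathrm{id} \otimes (X_i \to LX_i) \otimes \mathrm{id}$, tensoring the localization map $X_i \to LX_i$ with identities on the remaining factors. Each localization map $X_i \to LX_i$ is an $S$-equivalence by definition, and the compatibility hypothesis asserts precisely that tensoring an $S$-equivalence with an identity on either side yields an $S$-equivalence. Grouping the factors to the left and to the right of position $i$ into single objects and applying compatibility twice (using associativity of $\otimes$) shows each map in the factorization is an $S$-equivalence. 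Since $S$-equivalences are closed under composition by the 2-out-of-3 axiom of Proposition~\ref{prop:saturation-omnibus}, the full composite $\phi$ is an $S$-equivalence.

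Next I would compose $\phi$ with the chosen localization map $LX_1 \otimes \dots \otimes LX_n \to L(LX_1 \otimes \dots \otimes LX_n)$, which is again an $S$-equivalence. The resulting composite
\[
X_1 \otimes \dots \otimes X_n \to L(LX_1 \otimes \dots \otimes LX_n)
\]
is then an $S$-equivalence whose target is $S$-local, that is, an $S$-localization of $X_1 \otimes \dots \otimes X_n$. As $X_1 \otimes \dots \otimes X_n \to L(X_1 \otimes \dots \otimes X_n)$ is another $S$-localization of the same object, the uniqueness of $S$-localizations up to homotopy (under the source) produces the desired equivalence $L(X_1 \otimes \dots \otimes X_n) \to L(LX_1 \otimes \dots \otimes LX_n)$, which agrees with the map induced by $\phi$ on localizations.

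The main obstacle is the first step. Everything rests on knowing that replacing a single factor is an $S$-equivalence, and this is exactly what the two-sided compatibility hypothesis supplies; the subtle point is that one must apply compatibility to identities on \emph{multi-object} tensor factors lying on both sides of the map being replaced, which is handled by treating those factors as single objects and iterating. Once the factorization is in place, the remainder is a formal consequence of closure under composition and the uniqueness of localizations, requiring no choices beyond the localizations already fixed.
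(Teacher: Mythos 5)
Your proof is correct and follows essentially the same route as the paper, which also shows by induction (replacing one tensor factor at a time, using the two-sided compatibility hypothesis) that $X_1 \otimes \dots \otimes X_n \to LX_1 \otimes \dots \otimes LX_n$ is an $S$-equivalence and then invokes uniqueness of $S$-localizations. Your write-up merely makes explicit the factorization and the double application of compatibility that the paper's one-line induction leaves implicit.
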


\begin{proof}
  By induction, the map $X_1 \otimes \dots \otimes X_n \to LX_1
  \otimes \dots \otimes LX_n$ is an $S$-equivalence, and therefore any
  $S$-localization of the latter is equivalent to any $S$-localization
  of the former.
\end{proof}

\begin{cor}
  The monoidal structure on the homotopy category of $\mathcal{C}$
  induces a monoidal structure on the homotopy catogory of the
  localization $L^S \mathcal{C}$, making any localization functor into
  a monoidal functor. If $\mathcal{C}$ was symmetric monoidal, then so
  is the localization.
\end{cor}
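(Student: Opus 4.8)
The plan is to transport the monoidal structure along the reflection $L$. On local objects---the objects of $hL^S\mathcal{C}$---I would define the tensor product by $X \boxtimes Y := L(X \otimes Y)$, where $\otimes$ denotes the given tensor computed in $h\mathcal{C}$, and take the unit to be $L\mb I$. This is manifestly functorial, since $L$ and $\otimes$ are, and it lands among local objects by construction. The essential uniqueness of localizations established earlier makes $\boxtimes$ well-defined up to canonical isomorphism in $hL^S\mathcal{C}$.

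Next I would extract the associativity and unit isomorphisms from the preceding proposition. Because $S$ is compatible with the monoidal structure, the localization map $X \otimes Y \to L(X \otimes Y)$ is an $S$-equivalence that remains one after tensoring with $Z$; applying $L$ yields a canonical isomorphism $(X \boxtimes Y) \boxtimes Z = L(L(X \otimes Y) \otimes Z) \cong L(X \otimes Y \otimes Z)$, and symmetrically $X \boxtimes (Y \boxtimes Z) \cong L(X \otimes Y \otimes Z)$. Composing these with $L$ applied to the associator of $h\mathcal{C}$ produces the associator for $\boxtimes$. The unitors arise identically, by localizing the $S$-equivalences $\mb I \otimes X \to L\mb I \otimes X$ and $X \otimes \mb I \to X \otimes L\mb I$ and then using $\mb I \otimes X \cong X \cong X \otimes \mb I$ together with $LX \cong X$ for local $X$.

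The main work is the coherence axioms, and here the proposition does the heavy lifting. Its $n$-fold form identifies \emph{every} bracketing of an iterated $\boxtimes$-product of local objects $X_1, \dots, X_n$ with the single object $L(X_1 \otimes \cdots \otimes X_n)$, so the pentagon for $\boxtimes$ becomes the image under $L$ of the pentagon for $\otimes$ in $h\mathcal{C}$, which holds by hypothesis; the triangle identity transports the same way. I expect this to be the most delicate step: one must check that the canonical comparison isomorphisms are precisely those induced by the associators, not merely abstractly isomorphic. The point that resolves this is that all the comparison maps in sight are obtained by applying $L$ to $S$-equivalences built from the localization unit and the monoidal compatibility of $S$, so their coherence is inherited verbatim from $h\mathcal{C}$ once $L$ is applied.

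Finally, to see that $L \co h\mathcal{C} \to hL^S\mathcal{C}$ is monoidal I would use the structure isomorphism $L(X) \boxtimes L(Y) = L(LX \otimes LY) \cong L(X \otimes Y)$ supplied directly by the $n=2$ case of the proposition, together with $L(\mb I) = L\mb I$ on units; naturality and the monoidal-functor coherences reduce once more to the corresponding identities in $h\mathcal{C}$. In the symmetric case, the commutativity isomorphism $X \otimes Y \cong Y \otimes X$ in $h\mathcal{C}$ localizes to $X \boxtimes Y \cong Y \boxtimes X$, and the hexagon identity transports exactly as the pentagon did, so $hL^S\mathcal{C}$ inherits a symmetric monoidal structure for which $L$ is symmetric monoidal.
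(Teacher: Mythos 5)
Your proposal is correct and takes exactly the route the paper intends: the corollary is stated without explicit proof as an immediate consequence of the preceding proposition, and your construction $X \boxtimes Y = L(X \otimes Y)$ with unit $L\mb I$, transporting the associators, unitors, braiding, and coherence diagrams along the canonical equivalences $L(X_1 \otimes \dots \otimes X_n) \simeq L(LX_1 \otimes \dots \otimes LX_n)$, is the standard Day-reflection-style way of fleshing that out. Your identification of the delicate point---that the comparison isomorphisms are all induced by applying $L$ to $S$-equivalences built from the localization units, so coherence is inherited from $h\mathcal{C}$ by naturality---is precisely where the implicit work lives.
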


\begin{rmk}
  The inclusion $L^S \mathcal{C} \to \mathcal{C}$ is almost never
  monoidal. For example, it usually does not preserve the unit.
\end{rmk}

\begin{exam}
  Let $\mathcal{C}$ be the category of spaces with cartesian product,
  and let $E_*$ be a homology theory. Then any map $X \to X'$ which
  induces an isomorphism on $E_*$-homology also induces
  isomorphisms $E_*(X \times Y) \to E_*(X' \times Y)$ for any
  CW-complex $Y$: one can prove this inductively on the cells
    of $Y$. Therefore, $E$-homology equivalences are compatible with the
  Cartesian product monoidal structure.

  Similarly, $E$-homology equivalences are compatible with the smash
  product on based spaces (using that based spaces are built from
  $S^0$) or the smash product on spectra (using that all spectra are
  built from spheres $S^n$).
\end{exam}

\begin{exam}
  Let $\mathcal{C}$ be the category of spectra, and $f$ be the map
  $S^n \to *$. Then $f$-equivalences are maps inducing isomorphisms in
  degree strictly less than $n$. This is not compatible with the smash
  product on spectra: for example, smashing with $\Sigma^{-1} \mb S$
  does not preserve $f$-equivalences. If one restricts to the
  subcategory of \emph{connective} spectra, however, one finds that
  $f$-equivalences are compatible with the smash product.
\end{exam}

\begin{exam}
  Consider the map $f\co S^n \to *$ of spaces, so that
  $S$-equivalences are maps inducing an isomorphism on all homotopy
  groups in degrees less than $n$. This map is compatible with several
  symmetric monoidal structures, such as:
  \begin{enumerate}
  \item spaces with Cartesian product;
  \item spaces with disjoint union;
  \item based spaces with wedge product; and
  \item based spaces with smash product.
  \end{enumerate}
\end{exam}

Despite the usefulness of these results, the existence of a
(symmetric) monoidal localization functor on the homotopy category
does not, by itself, allow us to extend very structured multiplication
from an object $X$ to its localization $LX$. To counter this we
typically require the theory of operads.

\begin{defn}
  Suppose that $\mathcal{C}$ is (symmetric) monoidal, and that $X$ is
  an object of $\mathcal{C}$. The endomorphism operad
  $\End_{\mathcal{C}}(X)$ is the (symmetric) sequence of spaces
  $\Map_{\mathcal{C}}(X \otimes \dots \otimes X, X)$, with (symmetric)
  operad structure given by composition.

  Given a map $f\co X \to Y$, the endomorphism operad
  $\End_{\mathcal{C}}(f)$ is the (symmetric) sequence which in degree
  $n$ is the pullback diagram
  \[
    \xymatrix{
      \End_{\mathcal{C}}(f)_n \ar[r]  \ar[d] &
      \Map_{\mathcal{C}}(X \otimes \dots \otimes X, X) \ar[d] \\
      \Map_{\mathcal{C}}(Y \otimes \dots \otimes Y, Y) \ar[r] &
      \Map_{\mathcal{C}}(X \otimes \dots \otimes X, Y).
    }      
  \]
  The space $\End_{\mathcal{C}}(f)_n$ is the space of strictly
  commutative diagrams
  \[
    \xymatrix{
      X^{\otimes n} \ar[r] \ar[d]_{f^{\otimes n}} & X \ar[d]^f \\
      Y^{\otimes n} \ar[r]& Y,
    }
  \]
  and as such the operad structure is given by composition.
\end{defn}

The operad $\End_{\mathcal{C}}(f)$ has forgetful maps to
$\End_{\mathcal{C}}(X)$ and $\End_{\mathcal{C}}(Y)$.

\begin{prop}
  Suppose that the (symmetric) monoidal structure on $\mathcal{C}$ is
  compatible with $S$ and that $f\co X \to LX$ is an
  $S$-localization. If the maps $\Map_{\mathcal{C}}(LX^{\otimes n}, LX)
  \to \Map_{\mathcal{C}}(X^{\otimes n}, LX)$ are fibrations for all $n
  \geq 0$, then in the diagram of operads
  \[
    \End_{\mathcal{C}}(X) \leftarrow \End_{\mathcal{C}}(f)
    \to \End_{\mathcal{C}}(LX),
  \]
  the left-hand arrow is an equivalence on the level of underlying
  spaces.
\end{prop}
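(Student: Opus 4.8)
The plan is to argue one arity at a time. Fix $n \geq 0$; by definition the underlying space of the left-hand operad map $\End_{\mathcal{C}}(f) \to \End_{\mathcal{C}}(X)$ in arity $n$ is exactly the top horizontal map
\[
  \End_{\mathcal{C}}(f)_n \to \Map_{\mathcal{C}}(X^{\otimes n}, X)
\]
of the pullback square defining $\End_{\mathcal{C}}(f)_n$. So it suffices to show this map is a weak equivalence for each $n$.

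First I would record that the defining square, although a strict pullback, is in fact a homotopy pullback: the fibrancy hypothesis says precisely that the bottom map
\[
  (f^{\otimes n})^* \co \Map_{\mathcal{C}}(LX^{\otimes n}, LX) \to \Map_{\mathcal{C}}(X^{\otimes n}, LX)
\]
is a fibration, and a strict pullback along a fibration computes the homotopy pullback. For a homotopy pullback square, base change along the bottom edge shows that the top map is a weak equivalence whenever the bottom map is. I am thus reduced to proving that this bottom map $(f^{\otimes n})^*$ is a weak equivalence.

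For this I would use that $LX$ is $S$-local together with the compatibility of $S$ with the monoidal structure. The key claim is that $f^{\otimes n}\co X^{\otimes n} \to LX^{\otimes n}$ is itself an $S$-equivalence. I would prove this by factoring $f^{\otimes n}$ as a composite of $n$ maps, each applying $f$ to a single tensor factor while fixing the others:
\[
  X^{\otimes n} \to LX \otimes X^{\otimes (n-1)} \to (LX)^{\otimes 2} \otimes X^{\otimes(n-2)} \to \cdots \to LX^{\otimes n}.
\]
Each such map has the form $id \otimes f \otimes id$, hence is an $S$-equivalence by repeated application of the compatibility hypothesis, and the class of $S$-equivalences is closed under composition (a special case of the two-out-of-three axiom from Proposition~\ref{prop:saturation-omnibus}). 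Therefore $f^{\otimes n}$ is an $S$-equivalence, and since $LX$ is $S$-local, the induced map $(f^{\otimes n})^*$ on mapping spaces into $LX$ is a weak equivalence by the very definition of an $S$-equivalence.

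The one genuinely load-bearing step is the identification of $f^{\otimes n}$ as an $S$-equivalence, which is exactly where compatibility enters; everything else is formal. It is worth emphasizing the role of the fibrancy assumption: without it the square is only a strict pullback and the base-change conclusion could fail, so that hypothesis is precisely what licenses the passage from strict to homotopy pullback. Since the argument is uniform in $n$ and compatible with the structure maps, it assembles to the desired equivalence on underlying spaces of operads.
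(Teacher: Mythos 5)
Your proposal is correct and follows essentially the same route as the paper: the paper's one-sentence proof observes precisely that $\End_{\mathcal{C}}(f)_n \to \End_{\mathcal{C}}(X)_n$ is, in each arity, a homotopy pullback (licensed by the fibration hypothesis) of the weak equivalence $\Map_{\mathcal{C}}(LX^{\otimes n}, LX) \to \Map_{\mathcal{C}}(X^{\otimes n}, LX)$, and your factor-by-factor verification that $f^{\otimes n}$ is an $S$-equivalence is exactly the inductive argument the paper established in the preceding proposition on compatibility. You have simply filled in the details the paper leaves implicit, so nothing further is needed.
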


\begin{proof}
  This is merely the observation that $\End_{\mathcal{C}}(f)
  \to \End_{\mathcal{C}}(X)$ is, level by level, a homotopy pullback
  of the equivalences $\Map_{\mathcal{C}}(LX^{\otimes n}, LX) \to
  \Map_{\mathcal{C}}(X^{\otimes n}, LX)$.
\end{proof}

This condition then allows us to lift structured multiplication.
\begin{cor}
  Suppose that a (symmetric) operad $\mathcal{O}$ acts on $X$ via a map
$\mathcal{C} \to \End_{\mathcal{C}}(X)$. Then there exists a weak
equivalence $\mathcal{O}' \to \mathcal{O}$ of operads and an action of
$\mathcal{O}'$ on $LX$ such that $f$ is a map of
$\mathcal{O}'$-algebras.
\end{cor}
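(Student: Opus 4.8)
The plan is to realize $\mathcal{O}'$ as a strict pullback of operads and then transport the action across the equivalence supplied by the preceding proposition. Write $a\co \mathcal{O} \to \End_{\mathcal{C}}(X)$ for the given action, and let $p\co \End_{\mathcal{C}}(f) \to \End_{\mathcal{C}}(X)$ and $q\co \End_{\mathcal{C}}(f) \to \End_{\mathcal{C}}(LX)$ be the two forgetful maps. First I would set
\[
  \mathcal{O}' = \mathcal{O} \times_{\End_{\mathcal{C}}(X)} \End_{\mathcal{C}}(f),
\]
the pullback of $a$ and $p$. Since the forgetful functor from operads to (symmetric) sequences of spaces creates limits, this pullback is computed levelwise and is again an operad, with operad-map projections $\mathcal{O}' \to \mathcal{O}$ and $\mathcal{O}' \to \End_{\mathcal{C}}(f)$.

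Next I would check that $\mathcal{O}' \to \mathcal{O}$ is a weak equivalence. The preceding proposition already tells us that $p$ is a levelwise weak equivalence. I claim it is moreover a levelwise fibration: inside the defining pullback square for $\End_{\mathcal{C}}(f)_n$, the projection $p$ in level $n$ is precisely the base change of the map $\Map_{\mathcal{C}}(LX^{\otimes n}, LX) \to \Map_{\mathcal{C}}(X^{\otimes n}, LX)$ along the other leg, and that map is a fibration by the standing hypothesis of the proposition. As fibrations are stable under base change, $p$ is a levelwise acyclic fibration. Base changing once more along $a$, the projection $\mathcal{O}' \to \mathcal{O}$ is itself a levelwise acyclic fibration, hence a weak equivalence of operads.

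It then remains to produce the action on $LX$ and to verify the compatibility with $f$. For the action I would simply take the composite $\mathcal{O}' \to \End_{\mathcal{C}}(f) \too{q} \End_{\mathcal{C}}(LX)$. For the compatibility, observe that the $\mathcal{O}'$-algebra structure on $X$ is, by commutativity of the pullback square, the composite $\mathcal{O}' \to \End_{\mathcal{C}}(f) \too{p} \End_{\mathcal{C}}(X)$, while that on $LX$ is $\mathcal{O}' \to \End_{\mathcal{C}}(f) \too{q} \End_{\mathcal{C}}(LX)$; both factor through the single map $\mathcal{O}' \to \End_{\mathcal{C}}(f)$. Because a point of $\End_{\mathcal{C}}(f)_n$ is exactly a strictly commuting square relating an $n$-ary operation on $X$ to one on $LX$ through $f^{\otimes n}$ and $f$, a map of operads into $\End_{\mathcal{C}}(f)$ is precisely the data of compatible algebra structures on $X$ and $LX$ for which $f$ is an algebra map. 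Thus $f$ is automatically a map of $\mathcal{O}'$-algebras.

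The only genuine content lies in the second step: the fibration hypothesis of the proposition does double duty, supplying both that $p$ is a weak equivalence and that $p$ is a fibration, so that the strict pullback computing $\mathcal{O}'$ is a homotopy pullback and the equivalence $p$ descends to the equivalence $\mathcal{O}' \to \mathcal{O}$. Everything else is a formal manipulation of endomorphism operads. I expect the main obstacle, such as it is, to be purely bookkeeping: making sure the pullback is taken in a setting where limits of operads are levelwise and where ``weak equivalence'' and ``fibration'' are read off levelwise, so that stability of acyclic fibrations under base change applies verbatim.
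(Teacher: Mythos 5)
Your proof is correct and is essentially identical to the paper's: both define $\mathcal{O}'$ as the strict pullback of $\mathcal{O} \to \End_{\mathcal{C}}(X) \leftarrow \End_{\mathcal{C}}(f)$, deduce that $\mathcal{O}' \to \mathcal{O}$ is an equivalence from the fibration hypothesis, and read off the $\mathcal{O}'$-algebra structures from the map $\mathcal{O}' \to \End_{\mathcal{C}}(f)$. You merely spell out details the paper leaves implicit, namely that $p$ is a levelwise acyclic fibration (being the base change of the fibration $\Map_{\mathcal{C}}(LX^{\otimes n},LX) \to \Map_{\mathcal{C}}(X^{\otimes n},LX)$) and that limits of operads are created levelwise, so the strict pullback is homotopically meaningful.
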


\begin{proof}
  We define $\mathcal{O}'$ to be the fiber product of the diagram
  $\mathcal{O} \to \End_{\mathcal{C}}(X)
  \leftarrow \End_{\mathcal{C}}(f)$. The map
  $\mathcal{O}' \to \mathcal{O}$ is an equivalence by the fibration
  condition, and the map $\mathcal{O}' \to \End_{\mathcal{C}}(f)$ of
  operads precisely states that $f$ is a map of
  $\mathcal{O}'$-algebras. \footnote{If $\mathcal{O}$ happens to be a
    cofibrant (symmetric) operad $\mathcal{O}$ in Berger--Moerdijk's
    model structure \cite{berger-moerdijk-enrichedcats} we can do
    better. Any map $\mathcal{O} \to \End_{\mathcal{C}}(X)$ lifts, up
    to homotopy, to a map
    $\mathcal{O} \to \End_{\mathcal{C}}(f)
    \to \End_{\mathcal{C}}(LX)$.}
\end{proof}
This means that $A_\infty$ and $E_\infty$ multiplications on $X$
extend automatically to $A_\infty$ and $E_\infty$ multiplications on
$LX$.  However, this is the best we can do in general: lifting more
refined multiplicative structures requires stronger assumptions.

In cases where the category $\mathcal{C}$ has more structure, it is
typically easier to verify that $S$ is compatible with the monoidal
structure.

\begin{prop}
  Suppose that the monoidal structure on $\mathcal{C}$ has internal
  function objects $F^L(X,Y)$ and $F^R(X,Y)$ that are adjoint to
  the monoidal structure: there are isomorphisms
  \[
    \Map_{\mathcal{C}}(X,F^L(Y,Z)) \cong \Map(X \otimes Y, Z) \cong
    \Map_{\mathcal{C}}(Y,F^R(X,Z))
  \]
  that are natural in $X$, $Y$, and $Z$.  Then $S$ is compatible with
  the monoidal structure on $\mathcal{C}$ if and only if, for any
  $f\co A \to B$ in $S$ and any object $X \in \mathcal{C}$, the maps
  $id_X \otimes f$ and $f \otimes id_X$ are $S$-equivalences.
\end{prop}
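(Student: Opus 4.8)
The forward implication will be immediate: since every map of $S$ is an $S$-equivalence, compatibility of $S$-equivalences with the monoidal structure specializes directly to the stated condition on the maps in $S$. So the entire content lies in the converse, and the plan is to deduce compatibility for \emph{all} $S$-equivalences from the hypothesis on the generators $f \in S$ by exploiting the adjunction isomorphisms.

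The key step I would establish first is that the internal function objects preserve $S$-locality: if $Z$ is $S$-local and $X$ is any object, then $F^R(X,Z)$ and $F^L(X,Z)$ are $S$-local. To prove $F^R(X,Z)$ is $S$-local, I would fix a map $f\co A \to B$ in $S$ and use the adjunction isomorphism to identify
\[
  \Map_{\mathcal{C}}(B,F^R(X,Z)) \to \Map_{\mathcal{C}}(A,F^R(X,Z))
\]
with the map
\[
  \Map_{\mathcal{C}}(X \otimes B, Z) \to \Map_{\mathcal{C}}(X \otimes A, Z)
\]
induced by $id_X \otimes f$. By hypothesis $id_X \otimes f$ is an $S$-equivalence and $Z$ is $S$-local, so this map is a weak equivalence; since $f$ was arbitrary, $F^R(X,Z)$ is $S$-local. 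The symmetric argument, using the adjunction for the first variable together with the hypothesis on $f \otimes id_X$, shows that $F^L(X,Z)$ is $S$-local.

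With locality of the function objects in hand, I would finish as follows. Given any $S$-equivalence $g\co Y \to Y'$, any object $X$, and any $S$-local $Z$, the adjunction identifies
\[
  \Map_{\mathcal{C}}(X \otimes Y', Z) \to \Map_{\mathcal{C}}(X \otimes Y, Z)
\]
with the map $\Map_{\mathcal{C}}(Y',F^R(X,Z)) \to \Map_{\mathcal{C}}(Y,F^R(X,Z))$ induced by $g$; since $g$ is an $S$-equivalence and $F^R(X,Z)$ is $S$-local, this is a weak equivalence, proving that $id_X \otimes g$ is an $S$-equivalence. The map $g \otimes id_X$ is treated identically, with $F^L(X,Z)$ replacing $F^R(X,Z)$.

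I expect the only real obstacle to be the first step: recognizing that one should prove locality of the internal function objects rather than attempting to present an arbitrary $S$-equivalence $g$ in terms of the maps in $S$. The hypothesis is tested only against the \emph{generators} $f \in S$, while the conclusion must hold against every $S$-equivalence, and the correct reduction routes through locality of $F^L$ and $F^R$. Once that reformulation is made, every remaining assertion is a purely formal consequence of the defining adjunction isomorphisms and their naturality, requiring no further input.
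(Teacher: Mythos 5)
Your proposal is correct and follows essentially the same route as the paper's proof: both establish first, via the adjunction isomorphisms and the hypothesis on the maps $f \in S$, that $F^R(X,Z)$ and $F^L(X,Z)$ are $S$-local whenever $Z$ is, and then use the adjunction again to conclude that $id_X \otimes g$ and $g \otimes id_X$ are $S$-equivalences for an arbitrary $S$-equivalence $g$. Your framing of the key insight---testing against internal function objects rather than trying to build $g$ from the maps in $S$---is exactly the pivot the paper's argument rests on.
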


\begin{proof}
  Suppose that for any $f\co A \to B$ in $S$ and any object
  $X \in \mathcal{C}$, the maps $id_X \otimes f$ are
  $S$-equivalences. Using the unit isomorphisms, we find that if $Z$
  is $S$-local the maps in the diagram
  \[
    \xymatrix{
      \Map_{\mathcal{C}}(X \otimes B, Z) \ar[r] \ar[d] &
      \Map_{\mathcal{C}}(X \otimes A, Z) \ar[d] \\
      \Map_{\mathcal{C}}(B,F^R(X,Z)) \ar[r] &
      Map_{\mathcal{C}}(A,F^R(X,Z))
    }
  \]
  are equivalences. Therefore, $F^R(X,Z)$ is $S$-local, and so for any
  $S$-equivalence $f\co Y \to Y'$ the maps in the diagram
  \[
    \xymatrix{
      \Map_{\mathcal{C}}(X \otimes Y', Z) \ar[r] \ar[d] &
      \Map_{\mathcal{C}}(X \otimes Y, Z) \ar[d] \\
      \Map_{\mathcal{C}}(Y',F^R(X,Z)) \ar[r] &
      Map_{\mathcal{C}}(Y,F^R(X,Z))
    }
  \]
  are all equivalences. Similar considerations apply to $F^L$.
\end{proof}

\subsection{Monoidal model categories}

The necessary conditions for compatibility between model structures
and monoidal structures were determined by Schwede--Shipley
\cite{shipley-schwede-algebrasmodules} and Hovey \cite[\S
4.2]{hovey-modelcategories}, in the symmetric and nonsymmetric cases
respectively. This structure allows us, after
\cite{shipley-schwede-algebrasmodules}, to construct model structures
on categories of algebras and modules in $\mathcal{M}'$ such that
the localization functor $\mathcal{M} \to \mathcal{M}'$ preserves
this structure.

\begin{defn}
  A \emph{(symmetric) monoidal model category} $\mathcal{M}$ is a
  model category with a (symmetric) monoidal closed
  structure\footnote{Analogously to the previous section, this means
    that the symmetric monoidal structure must have left and right
    function objects which are adjoints in each variable.} satisfying
  the following axioms.
  \begin{enumerate}
  \item (Pushout-product) Given cofibrations $i\co A \to A$ and $j\co
    B \to B'$ in $\mathcal{M}$, the induced pushout-product map
    \[
      i \boxtimes j\co (A \otimes B') \coprod_{A \otimes B} (A' \otimes B) \to
      A' \otimes B'
    \]
    is a cofibration, which is acyclic if either $i$ or $j$ is.
  \item (Unit) Let $Q\mb I \to \mb I$ be a cofibrant replacement of the
    unit. Then the natural maps $Q\mb I \otimes X \to X \leftarrow X
    \otimes Q\mb I$ are isomorphisms for all cofibrant $X$.
  \end{enumerate}
\end{defn}

\begin{prop}
  \label{prop:tensorwithcofibrant}
  Suppose that $\mathcal{M}$ is a monoidal model category. Then, for
  cofibrant objects $X$, the functors $X \otimes (-)$ and $(-) \otimes
  X$ preserve cofibrations, acyclic cofibrations, and weak
  equivalences between cofibrant objects.
\end{prop}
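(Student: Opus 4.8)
The plan is to derive everything from the pushout-product axiom applied to the single cofibration $i\co \emptyset \to X$, where $\emptyset$ denotes the initial object. First I would record the one structural fact I need about the monoidal structure: since $\mathcal{M}$ is monoidal \emph{closed}, each functor $(-) \otimes B$ is a left adjoint (to the internal function object $F^R(B,-)$) and hence preserves all colimits. In particular it preserves the empty colimit, so $\emptyset \otimes B \cong \emptyset$ for every object $B$, and symmetrically $B \otimes \emptyset \cong \emptyset$.

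Next, fix a cofibrant object $X$, so that $i\co \emptyset \to X$ is a cofibration, and let $j\co B \to B'$ be an arbitrary cofibration. I would form the pushout-product $i \boxtimes j$ and identify its source. Using $\emptyset \otimes B \cong \emptyset \cong \emptyset \otimes B'$, the defining pushout collapses:
\[
  (\emptyset \otimes B') \coprod_{\emptyset \otimes B} (X \otimes B) \cong
  \emptyset \coprod_{\emptyset} (X \otimes B) \cong X \otimes B,
\]
so that $i \boxtimes j$ is exactly the map $id_X \otimes j\co X \otimes B \to X \otimes B'$. The pushout-product axiom then immediately yields that $id_X \otimes j$ is a cofibration, and that it is acyclic whenever $j$ is, since $i \boxtimes j$ is acyclic if either $i$ or $j$ is. This establishes that $X \otimes (-)$ preserves cofibrations and acyclic cofibrations.

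To handle weak equivalences between cofibrant objects, I would invoke Ken Brown's lemma, exactly as it was used earlier in the text. We have just shown that $X \otimes (-)$ carries acyclic cofibrations to acyclic cofibrations, hence in particular carries acyclic cofibrations between cofibrant objects to weak equivalences; Ken Brown's lemma then upgrades this to the conclusion that $X \otimes (-)$ takes \emph{all} weak equivalences between cofibrant objects to weak equivalences. The functor $(-) \otimes X$ is treated by the identical argument, now forming the pushout-product $j \boxtimes i$ with $i\co \emptyset \to X$ placed on the right and using $B \otimes \emptyset \cong \emptyset$.

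I do not expect a serious obstacle: once the source of the pushout-product is identified, the whole statement is a direct reading of the axiom together with Ken Brown's lemma. The only point genuinely requiring care is the identification $\emptyset \otimes B \cong \emptyset$, and this is precisely where closedness of the monoidal structure enters — without a right adjoint for $(-) \otimes B$ one could not guarantee that the tensor preserves the initial object, and the collapse of the pushout defining the source of $i \boxtimes j$ would fail.
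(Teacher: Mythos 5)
Your proof is correct and follows the same route as the paper: both exploit that $\otimes$ has adjoints (hence preserves colimits, so tensoring with $\emptyset$ gives $\emptyset$), apply the pushout-product axiom to $\emptyset \to X$ in either variable to get preservation of (acyclic) cofibrations, and finish with Ken Brown's lemma. Your write-up simply makes explicit the collapse of the pushout identifying $i \boxtimes j$ with $\mathrm{id}_X \otimes j$, which the paper leaves implicit.
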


\begin{proof}
  Since $\otimes$ has adjoints, it preserves colimits in each
  variable. In particular, any object tensored with an initial object
  of $\mathcal{M}$ is an initial object of $\mathcal{M}$. Applying the
  pushout-product axiom to the map $\emptyset \to X$ in either
  variable, we find that the two functors in question preserve
  cofibrations and acyclic cofibrations. By Ken Brown's lemma, they
  also automatically take weak equivalences between cofibrant objects
  to weak equivalences.
\end{proof}

This connects with our work in the the previous section, which only
asked that the tensor product preserved equivalences in each variable.
The pushout-product axiom for monoidal model categories looks
stronger, in principle, but Proposition~\ref{prop:tensorwithcofibrant}
has a partial converse.

\begin{prop}
  \label{prop:cofibrantsource}
  Suppose that $j\co B \to B'$ is a map such that $(-) \otimes B$
  preserves acyclic cofibrations and that $(-) \otimes B'$
  preserves weak equivalences between cofibrant objects. If $i$ is an
  acyclic cofibration with cofibrant source, then the pushout-product
  map $i \boxtimes j$ is an equivalence.
\end{prop}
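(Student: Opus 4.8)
The plan is to exhibit $i \boxtimes j$ as a map out of a pushout and then identify it as one edge of a two-out-of-three triangle whose other two edges are manifestly weak equivalences. Write $i\co A \to A'$ and $j\co B \to B'$, and let $P$ denote the pushout $(A \otimes B') \coprod_{A \otimes B} (A' \otimes B)$, so that $i \boxtimes j$ is the induced map $P \to A' \otimes B'$. As a preliminary bookkeeping point, I would note that since $A$ is cofibrant and $i$ is an acyclic cofibration, the target $A'$ is cofibrant as well; this is precisely what lets us invoke the second hypothesis below.

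Next, consider the defining pushout square
\[
  \xymatrix{
    A \otimes B \ar[r]^{i \otimes B} \ar[d]_{A \otimes j} & A' \otimes B \ar[d] \\
    A \otimes B' \ar[r] & P.
  }
\]
By the first hypothesis, $(-) \otimes B$ preserves acyclic cofibrations, so $i \otimes B$ is an acyclic cofibration. Since acyclic cofibrations are closed under pushout in any model category, the opposite leg $A \otimes B' \to P$ is an acyclic cofibration, and in particular a weak equivalence.

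Finally, I would observe that the composite $A \otimes B' \to P \too{i \boxtimes j} A' \otimes B'$ is, by the construction of the pushout-product, exactly the map $i \otimes B'$. Since $A$ and $A'$ are both cofibrant and $i$ is a weak equivalence, the second hypothesis---that $(-) \otimes B'$ preserves weak equivalences between cofibrant objects---shows that $i \otimes B'$ is a weak equivalence. Applying the two-out-of-three axiom to the triangle whose legs are the equivalence $A \otimes B' \to P$ and the equivalence $i \otimes B'$ forces $i \boxtimes j\co P \to A' \otimes B'$ to be a weak equivalence. There is no serious obstacle here: the entire argument is a single application of two-out-of-three once the pushout is in place, and the only points requiring care are ensuring that $A'$ is cofibrant so the hypothesis on $(-) \otimes B'$ applies, and recognizing that the relevant composite is precisely $i \otimes B'$.
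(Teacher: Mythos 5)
Your proof is correct and is essentially the paper's own argument: both exhibit $i \boxtimes j$ via the defining pushout square, observe that $i \otimes B$ is an acyclic cofibration whose pushout $A \otimes B' \to P$ is again an acyclic cofibration, identify the composite $A \otimes B' \to P \to A' \otimes B'$ with $i \otimes B'$ (an equivalence by the hypothesis on $(-) \otimes B'$), and conclude by two-out-of-three. If anything, your bookkeeping is slightly more careful than the paper's ``without loss of generality, all four objects cofibrant,'' since you use exactly the stated hypotheses and deduce cofibrancy of $A'$ from that of $A$.
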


\begin{proof}
  Without loss of generality, let $i\co A \to A'$ be an acyclic
  cofibration and $j\co B \to B'$ a cofibration, with all four objects
  cofibrant. Then the pushout-product $i \boxtimes j$ is part of
  the following diagram:
  \[
    \xymatrix{
      & A' \otimes B \ar[d] \ar[dr] \\
      A \otimes B \ar[ur]^\sim \ar[dr] &
      P \ar[r]^-{i \boxtimes j} & A' \otimes B' \\
      &A \otimes B' \ar[u]_\sim \ar[ur]_\sim }
  \]
  The upper-left and lower-right maps are equivalences because they
  are obtained by tensoring an acyclic cofibration with the cofibrant
  objects $B$ and $B'$. The map $A \otimes B' \to P$ is the pushout of
  an acyclic cofibration, and so it is an acyclic
  cofibration. Therefore, by the 2-out-of-3 axiom the map $i \boxtimes
  j$ is an equivalence.
\end{proof}

The adunction isomorphism $\Hom_{\mathcal{M}}(X \otimes Y, Z) \cong
\Hom_{\mathcal{M}}(X, F^R(Y,Z))$, and similarly for the left, allows
us to rephrase the pushout-product axiom in multiple ways.
\begin{prop}[{\cite[4.2.2]{hovey-modelcategories}}]
  The following are equivalent for a model category $\mathcal{M}$ with
  a closed monoidal structure.
  \begin{enumerate}
  \item The model category $\mathcal{M}$ satisfies the pushout-product
    axiom.
  \item For a cofibration $i\co A \to B$ and a fibration $p\co X \to
    Y$ in $\mathcal{M}$, the induced map
    \[
      F^R(B,X) \to F^R(B,Y) \times_{F^R(A,Y)} F^R(A,X)
    \]
    is a fibration, which is acyclic if either $i$ or $p$ are.
  \item For a cofibration $i\co A \to B$ and a fibration $p\co X \to
    Y$ in $\mathcal{M}$, the induced map
    \[
      F^L(B,X) \to F^L(B,Y) \times_{F^L(A,Y)} F^L(A,X)
    \]
    is a fibration, which is acyclic if either $i$ or $p$ are.
  \end{enumerate}
\end{prop}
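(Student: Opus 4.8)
The plan is to collapse all three statements onto the same collection of lifting problems by means of the two-variable (``Leibniz'') adjunction, and then read off the equivalences from the lifting characterizations of the classes in the model structure. First I would record the purely formal input, which uses only the closed structure and not the model structure. For maps $k\co U \to V$, $i\co A \to B$, and $p\co X \to Y$, write $k \boxtimes i$ for the pushout-product and $\widehat{F^R}(i,p)$, $\widehat{F^L}(k,p)$ for the pullback-hom maps appearing in (2) and (3). Applying the adjunction $\Hom_{\mathcal{M}}(X \otimes Y, Z) \cong \Hom_{\mathcal{M}}(X, F^R(Y,Z))$ in one tensor slot and its left analogue in the other, a lifting problem of $k \boxtimes i$ against $p$ unwinds into a lifting problem of $k$ against $\widehat{F^R}(i,p)$, and also into one of $i$ against $\widehat{F^L}(k,p)$. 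Concretely, the pushout corner $(U \otimes B)\cup_{U\otimes A}(V\otimes A)$ in the source of $k\boxtimes i$ is exactly what the two partial adjoints glue along the pullback corner $F^R(B,Y)\times_{F^R(A,Y)}F^R(A,X)$ in the target of the pullback-hom. Thus there are natural bijections between the three families of lifting problems.

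With this in hand, (1) $\Rightarrow$ (2) is a matter of translating the adjectives into lifting properties. Recall a map is a fibration if and only if it has the right lifting property against all acyclic cofibrations. So $\widehat{F^R}(i,p)$ is a fibration if and only if every acyclic cofibration $k$ lifts against it, which by the adjunction holds if and only if $k \boxtimes i$ lifts against $p$; but the pushout-product axiom makes $k \boxtimes i$ an acyclic cofibration, and acyclic cofibrations lift against the fibration $p$. The two acyclicity clauses are the same computation after replacing ``acyclic cofibration $k$'' by ``cofibration $k$'' (testing the acyclic-fibration property) and invoking the appropriate half of the pushout-product axiom: if $p$ is acyclic then $k \boxtimes i$ is an ordinary cofibration lifting against the acyclic fibration $p$, while if $i$ is acyclic then $k \boxtimes i$ is again acyclic by the axiom.

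The converse (2) $\Rightarrow$ (1) runs the same bijection backwards. A map is a cofibration if and only if it lifts against all acyclic fibrations, so $i \boxtimes j$ is a cofibration if and only if $i$ lifts against $\widehat{F^R}(j,p)$ for every acyclic fibration $p$; by (2) this pullback-hom is an acyclic fibration, against which the cofibration $i$ lifts. The acyclic cases are handled by instead testing against all fibrations $p$ and using that (2) makes $\widehat{F^R}(j,p)$ a fibration, so that an acyclic cofibration in either variable lifts. Finally (1) $\Leftrightarrow$ (3) is proved verbatim, using the left pullback-hom $\widehat{F^L}$ and its half of the adjunction in place of $\widehat{F^R}$.

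The main obstacle is not the model-categorical bookkeeping — which is routine once the lifting characterizations are in place — but rather setting up the Leibniz adjunction with the variances and the pushout/pullback corners matched correctly. Getting the identification of the pushout corner in the domain of $k \boxtimes i$ with the pullback corner in the codomain of $\widehat{F^R}(i,p)$ exactly right is where care is needed; everything else is a dictionary lookup between ``(acyclic) (co)fibration'' and its defining lifting property.
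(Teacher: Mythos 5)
Your proof is correct and is essentially the argument the paper points to (and that the cited Proposition 4.2.2 of Hovey carries out): transpose the pushout-product axiom through the two-variable adjunction into lifting problems against the pullback-hom maps, and then read off the equivalences from the lifting characterizations of (acyclic) cofibrations and fibrations. One small variance slip worth noting: with the paper's conventions $\Map_{\mathcal{C}}(X,F^L(Y,Z)) \cong \Map(X \otimes Y, Z) \cong \Map_{\mathcal{C}}(Y,F^R(X,Z))$, a lifting problem of $k \boxtimes i$ against $p$ transposes to one of $k$ against $\widehat{F^L}(i,p)$ and to one of $i$ against $\widehat{F^R}(k,p)$ --- the opposite pairing from the one you wrote --- but since the pushout-product axiom quantifies over both orders and you prove $(1) \Leftrightarrow (2)$ and $(1) \Leftrightarrow (3)$ by symmetric arguments, this merely interchanges the roles of (2) and (3) and does not affect correctness.
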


\begin{cor}[{\cite[4.2.5]{hovey-modelcategories}}]
  Suppose that $\mathcal{M}$ is a cofibrantly generated model
  category with a closed monoidal structure, a set $I$ of generating
  cofibrations and $J$ of generating acyclic cofibrations. Then the
  pushout-product axiom for $\mathcal{M}$ holds if and only if the
  pushout-product takes $I \times I$ to cofibrations in $\mathcal{M}$
  and takes both $I \times J$ and $J \times I$ to acyclic
  cofibrations.
\end{cor}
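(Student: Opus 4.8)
The forward implication is immediate: since $I$ consists of cofibrations and $J$ of acyclic cofibrations, the three conditions $I \boxtimes I \subseteq \text{(cofibrations)}$, $I \boxtimes J \subseteq \text{(acyclic cofibrations)}$, and $J \boxtimes I \subseteq \text{(acyclic cofibrations)}$ are merely special cases of the pushout-product axiom. The plan is therefore to establish the reverse implication, deducing the full axiom for all cofibrations from these three conditions on the generators.

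The central tool will be the following general fact about the pushout-product, available precisely because the closed structure forces $\otimes$ to preserve all colimits in each variable: for any two sets of maps $\mathcal{K}$ and $\mathcal{L}$, one has $\overline{\mathcal{K}} \boxtimes \overline{\mathcal{L}} \subseteq \overline{\mathcal{K} \boxtimes \mathcal{L}}$, where $\overline{(-)}$ denotes the \emph{saturation} (the closure under pushout, transfinite composition, retract, and coproduct, equivalently the class of retracts of relative cell complexes). I would prove this by fixing $\ell \in \mathcal{L}$ and observing that the class $\{k : k \boxtimes \ell \in \overline{\mathcal{K} \boxtimes \mathcal{L}}\}$ contains $\mathcal{K}$ and is saturated, since $(-) \boxtimes \ell$ carries pushouts to pushouts, transfinite composites to transfinite composites, coproducts to coproducts, and retracts to retracts; hence it contains $\overline{\mathcal{K}}$. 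I would then fix $k \in \overline{\mathcal{K}}$ and run the identical argument in the second variable to absorb $\overline{\mathcal{L}}$.

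With this lemma in hand the corollary follows by bookkeeping, using that in a cofibrantly generated model category the cofibrations are exactly $\overline{I}$ and the acyclic cofibrations exactly $\overline{J}$, and that both of these classes are themselves saturated. For an arbitrary pair of cofibrations $i,j$ we get $i \boxtimes j \in \overline{I} \boxtimes \overline{I} \subseteq \overline{I \boxtimes I} \subseteq \text{(cofibrations)}$, the last inclusion holding because $I \boxtimes I$ lands in the cofibrations and the cofibrations are saturated. For an acyclic cofibration $i$ against a cofibration $j$ one has $i \boxtimes j \in \overline{J} \boxtimes \overline{I} \subseteq \overline{J \boxtimes I} \subseteq \text{(acyclic cofibrations)}$, and the symmetric case of a cofibration against an acyclic cofibration uses $\overline{I} \boxtimes \overline{J} \subseteq \overline{I \boxtimes J}$ with the hypothesis $I \boxtimes J \subseteq \text{(acyclic cofibrations)}$. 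This accounts for all three clauses of the pushout-product axiom.

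The main obstacle is the colimit-preservation step underlying the saturation lemma: verifying that for a fixed map $\ell$ the functor $(-) \boxtimes \ell$ on arrow categories commutes with the saturation operations. This is exactly where closedness of the monoidal structure is essential, for $\otimes$ has a right adjoint in each variable and hence preserves colimits, while the pushout-product is assembled from $\otimes$ and a single pushout, so one must confirm that these colimits interchange correctly (the ``Leibniz calculus'' of pushout-products). Once this interchange is checked the remainder is formal. Alternatively, I could sidestep the explicit colimit bookkeeping by passing through the pullback-hom formulation of the preceding proposition, rewriting ``$i \boxtimes j$ lifts against $p$'' as ``$i$ lifts against $F^R(B,X) \to F^R(B,Y) \times_{F^R(A,Y)} F^R(A,X)$,'' and then invoking the standard fact that any class defined by a left lifting property is automatically saturated.
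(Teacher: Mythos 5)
Your proposal is correct, but it takes a genuinely different route from the one the paper points to. The paper gives no argument of its own for this corollary: it cites Hovey, and its placement immediately after the proposition on the pullback-hom reformulations of the pushout-product axiom signals the intended proof---which is precisely the ``alternative'' you sketch in your final sentence. Namely, by adjunction $i \boxtimes j$ has the left lifting property against $p\co X \to Y$ if and only if $i$ lifts against $F^R(B,X) \to F^R(B,Y) \times_{F^R(A,Y)} F^R(A,X)$, any class defined by a left lifting property is saturated, and in a cofibrantly generated model category the cofibrations are exactly $\overline{I}$ and the acyclic cofibrations exactly $\overline{J}$, so checking on generators suffices. Your main argument instead proves the Leibniz-saturation lemma $\overline{\mathcal{K}} \boxtimes \overline{\mathcal{L}} \subseteq \overline{\mathcal{K} \boxtimes \mathcal{L}}$ directly, one variable at a time; this is a valid and more self-contained route (it never mentions fibrations, and it only uses that $\otimes$ preserves colimits in each variable, so it would survive in settings where such colimit preservation holds without full closedness), at the cost of the colimit bookkeeping you rightly flag. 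One refinement to that bookkeeping: it is not literally true that $(-) \boxtimes \ell$ ``carries transfinite composites to transfinite composites.'' If $k$ is the transfinite composite of maps $k_\alpha\co X_\alpha \to X_{\alpha+1}$ and $\ell\co C \to D$, then $k \boxtimes \ell$ is a transfinite composite of \emph{cobase changes} of the maps $k_\alpha \boxtimes \ell$: the intermediate objects are $(X_\lambda \otimes C) \cup_{X_\alpha \otimes C} (X_\alpha \otimes D)$, and each successive map is a pushout of $k_\alpha \boxtimes \ell$ rather than the map itself. This does not damage your argument, since the class you test membership in, $\overline{\mathcal{K} \boxtimes \mathcal{L}}$, is closed under cobase change as well as transfinite composition, so your class $\{k : k \boxtimes \ell \in \overline{\mathcal{K} \boxtimes \mathcal{L}}\}$ is still closed under transfinite composition---but the statement as written would be false, and the adjunction route buys you exactly this subtlety for free, since saturation of lifting classes is standard.
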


Because left Bousfield localization doesn't change the cofibrations in
a model structure, one is reduced to a few key verifications.

\begin{prop}
  Suppose that $\mathcal{M}$ is a (symmetric) monoidal closed model
  category with left Bousfield localization $\mathcal{M}'$. Then
  $\mathcal{M}'$ is compatibly a (symmetric) monoidal model category if and
  only if, for cofibrations $i$ and $j$ such that one is acyclic, the
  pushout-product map $i \boxtimes j$ is acyclic.

  If $\mathcal{M}'$ is cofibrantly generated, then it suffices to
  check that the pushout-product of a generating acyclic cofibration
  with a generating cofibration, in either order, is a weak
  equivalence.
\end{prop}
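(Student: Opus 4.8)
The plan is to observe that nearly every clause in the definition of a (symmetric) monoidal model category is automatically inherited by $\mathcal{M}'$ from $\mathcal{M}$, so that the stated acyclicity condition is the only genuine content. First I would record what the two model structures share. They have the same underlying category and the same closed (symmetric) monoidal structure, and by definition of left Bousfield localization they have the same cofibrations. Since acyclic fibrations are precisely the maps with the right lifting property against all cofibrations, $\mathcal{M}$ and $\mathcal{M}'$ also have the same acyclic fibrations, hence the same cofibrant objects and the same cofibrant replacement $Q\mb I \to \mb I$ of the unit. What differs is only the (larger) class of weak equivalences in $\mathcal{M}'$, namely the $S$-equivalences, and therefore the (smaller) class of fibrations.

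The forward implication is then immediate: if $\mathcal{M}'$ is a monoidal model category, the pushout-product axiom holds in $\mathcal{M}'$, and its acyclicity clause—that $i \boxtimes j$ is acyclic whenever $i$ or $j$ is an acyclic cofibration—is exactly the stated condition. For the converse I would verify the two axioms directly in $\mathcal{M}'$. The unit axiom transfers verbatim, because $Q\mb I$ and the cofibrant objects coincide with those of $\mathcal{M}$ and the maps $Q\mb I \otimes X \to X \leftarrow X \otimes Q\mb I$ are already isomorphisms there. For the pushout-product axiom, the cofibration clause is free: a pushout-product of cofibrations of $\mathcal{M}'$ is a pushout-product of cofibrations of $\mathcal{M}$, hence a cofibration of $\mathcal{M}$, hence of $\mathcal{M}'$. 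The only surviving clause is that $i \boxtimes j$ is acyclic in $\mathcal{M}'$ when one of $i,j$ is an acyclic cofibration of $\mathcal{M}'$—where \emph{acyclic} now means an $S$-equivalence that is a cofibration—and this is precisely our hypothesis. This yields the biconditional.

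For the cofibrantly generated refinement I would invoke Hovey's corollary \cite[4.2.5]{hovey-modelcategories}. Using the same generating cofibrations $I$ (legitimate, since the cofibrations are unchanged) together with a set $J'$ of generating acyclic cofibrations of $\mathcal{M}'$, that corollary reduces the pushout-product axiom to checking that $I \boxtimes I$ lands in cofibrations and that $I \boxtimes J'$ and $J' \boxtimes I$ land in acyclic cofibrations. The first is inherited from the monoidal structure already present on $\mathcal{M}$, and every such pushout-product is automatically a cofibration, so for $i,j$ ranging over $I$ and $J'$ one need only confirm that $i \boxtimes j$ is a weak equivalence of $\mathcal{M}'$—which is exactly the stated check on generators.

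The step that is not pure bookkeeping, and the one I expect to be the main obstacle, is this passage from generators to all maps: why it suffices to test acyclicity of $i \boxtimes j$ on generating (acyclic) cofibrations. This rests on the closed structure. Via the pushout-product/pullback-hom adjunction (the equivalent formulations recorded in \cite[4.2.2]{hovey-modelcategories}), the condition that $(-)\boxtimes j$ carry acyclic cofibrations to acyclic cofibrations is a lifting condition that is stable under the cellular operations—pushouts, transfinite composition, and retracts—that build all cofibrations out of $I$, and symmetrically in the $J'$ variable. Packaging this stability is precisely the work done by Hovey's corollary, so citing it is the cleanest route; reproving it from scratch would be the only technically involved part of the argument.
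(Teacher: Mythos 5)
Your proposal is correct and follows essentially the route the paper intends: since left Bousfield localization keeps the cofibrations (hence the acyclic fibrations, cofibrant objects, and unit axiom) fixed, only the acyclicity clause of the pushout-product axiom carries content, and the generator reduction is exactly Hovey's criterion \cite[4.2.5]{hovey-modelcategories} applied with the unchanged set $I$ and a generating set of acyclic cofibrations for $\mathcal{M}'$, noting that such pushout-products are automatically cofibrations so only the weak-equivalence condition remains. This matches the paper's reading, which presents the proposition as precisely these ``few key verifications'' following from its preceding results.
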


\begin{rmk}
  If the generating cofibrations and generating acyclic cofibrations
  of $\mathcal{M}'$ have cofibrant source, then by
  Proposition~\ref{prop:cofibrantsource} we only need to show that
  tensoring with the sources or target of any map in $I$ or $J$
  takes generating cofibrations in $\mathcal{M}'$ to weak
  equivalences.
\end{rmk}

\begin{rmk}
  Bousfield localization of stable model categories has been more
  extensively studied by Barnes and Roitzheim
  \cite{barnes-roitzheim-stablelocalization,
    barnes-roitzheim-homologicallocalization}. To have homotopical
  control over \emph{commutative} algebra objects in a symmetric
  monoidal model category, one needs to obtain control over the
  extended power constructions; see \cite{white-commutativemonoids}.
\end{rmk}

\subsection{Monoidal $\infty$-categories}

We will begin by giving a brief background on monoidal structures on
$\infty$-categories which is light on technical details. 

Recall that a \emph{multicategory}
$\mathcal{O}$ is equivalent to the following data:
\begin{enumerate}
\item a collection of objects of $\mathcal{O}$;
\item for any object $Y$ and indexed set of objects
  $\{X_s\}_{s \in S}$ of $\mathcal{O}$, a space
  $\Map_{\mathcal{O}}(\{X_s\}_{s \in S};Y)$ of multimaps; and
\item for a surjection $p\co S \to T$ of finite sets, natural
  composition maps
  \[
    \Map_{\mathcal{O}}(\{Y_t\}_{t \in T};Z) \times \prod_{t \in T}
    \Map_{\mathcal{O}}(\{X_s\}_{s \in p^{-1}(t)}; Y_t) \to
    \Map_{\mathcal{O}}(\{X_s\}_{s \in S}; Z)
  \]
  that are compatible with composing surjections $S \to T \to U$.
\end{enumerate}
  
\begin{rmk}
  As a special case, for $\sigma$ a permutation of $S$ there is an
  isomorphism
  $\Map_{\mathcal{O}}(\{X_s\}_{s \in S};Y) \to
  \Map_{\mathcal{O}}(\{X_{\sigma(s)}\}_{s \in S};Y)$, and the
  composition operations are appropriately equivariant with respect to
  these isomorphisms.
\end{rmk}

For such a multicategory, we could give a prototype definition of an
$\mathcal{O}$-monoidal $\infty$-category $\mathcal{C}$ as an enriched
functor from $\mathcal{O}$ to $\infty$-categories. This data
specifies, for each object $X$ of $\mathcal{O}$, a category
$\mathcal{C}_X$. For each object $Y$ and indexed set
$\{X_s\}_{s \in S}$ of objects, there is a specified continuous map
from $\Map_{\mathcal{O}}(\{X_s\}_{s \in S};Y)$ to the space of functors
$\prod_{s \in S} \mathcal{C}_{X_s} \to \mathcal{C}_Y$. Moreover, these
maps must be compatible with composition on both sides.

The definition of an $\infty$-operad $\mathcal{O}$ and an
$\mathcal{O}$-monoidal $\infty$-category $\mathcal{C}$ is slightly
different from this \cite[\S 2.1]{lurie-higheralgebra}. Roughly, it is
an \emph{unstraightened} definition where the spaces of multimaps in
$\mathcal{O}$ and the product functors on $\mathcal{C}$ are only
specified up to a contractible space of choices; the technical details
are related in spirit to Segal's work
\cite{segal-categoriescohomology}. Even though the functors induced
from $\mathcal{O}$ are specified only up to contractible
indeterminacy, it still makes sense to ask about compatibility of the
monoidal structure with localization.

The following result very general result encodes the situations under
which homotopical localization is compatible with monoidal structures.
\begin{thm}[{\cite[2.2.1.9]{lurie-higheralgebra}}]
  Let $\mathcal{O}^\otimes$ be an $\infty$-operad and let
  $\mathcal{C}$ be an $\mathcal{O}$-monoidal
  $\infty$-category. Suppose that for all objects $X$ of
  $\mathcal{O}$ we have a localization functor
  $L_X\co \mathcal{C}_X \to \mathcal{C}_X$, and that for any map
  $\alpha\co \{X_s\}_{s \in S} \to Y$ in $\mathcal{O}^\otimes$ the induced
  functor $\prod_{s \in S} \mathcal{C}_{X_s} \to \mathcal{C}_Y$ preserves
  $L$-equivalences in each variable. Then there exists an
  $\mathcal{O}$-monoidal structure on the category $L\mathcal{C}$ of
  local objects making the localization
  $L\co \mathcal{C} \to L\mathcal{C}$ into an $\mathcal{O}$-monoidal
  functor.
\end{thm}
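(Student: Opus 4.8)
The plan is to pass to the unstraightened description of the monoidal structure: an $\mathcal{O}$-monoidal $\infty$-category is a coCartesian fibration $q\co \mathcal{C}^\otimes \to \mathcal{O}^\otimes$ whose fiber over a color $X$ is $\mathcal{C}_X$, and whose coCartesian pushforward along an active operation $\alpha\co \{X_s\}_{s\in S} \to Y$ recovers the multifunctor $\prod_s \mathcal{C}_{X_s} \to \mathcal{C}_Y$. I would let $L\mathcal{C}^\otimes \subseteq \mathcal{C}^\otimes$ be the full subcategory spanned by the fiberwise-local objects: an object lying over a tuple $(X_1,\dots,X_n)$, corresponding to $(M_1,\dots,M_n)$ with $M_i \in \mathcal{C}_{X_i}$, belongs to $L\mathcal{C}^\otimes$ exactly when each $M_i$ is $L_{X_i}$-local. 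The goal then becomes to show that the restriction $q\co L\mathcal{C}^\otimes \to \mathcal{O}^\otimes$ is again a coCartesian fibration, and that the fiberwise localizations $L_X$ assemble into a functor $L\co \mathcal{C}^\otimes \to L\mathcal{C}^\otimes$ over $\mathcal{O}^\otimes$ that preserves coCartesian edges.

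The key construction is the production of coCartesian lifts inside $L\mathcal{C}^\otimes$. Given a local object $M$ over $U$ and a morphism $\phi\co U \to V$ in $\mathcal{O}^\otimes$, I would first take the coCartesian lift $M \to N$ in $\mathcal{C}^\otimes$ and then apply the fiberwise localization to the target, obtaining $N \to LN$ over $\mathrm{id}_V$. I claim the composite $e\co M \to LN$ is coCartesian for $q$ restricted to $L\mathcal{C}^\otimes$. Since $L\mathcal{C}^\otimes$ is full, mapping spaces agree with those in $\mathcal{C}^\otimes$, so this amounts to showing that for every local $P$ over $W$ and every $\psi\co V \to W$, restriction along $e$ induces an equivalence between the spaces of morphisms lying over $\psi\phi$ and over $\psi$ respectively.

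The hard part will be verifying this coCartesian property, and it is exactly here that the compatibility hypothesis enters. Using the standard identification for a coCartesian fibration, the space of maps $N \to P$ lying over $\psi$ is computed in the fiber over $W$ as $\Map_{\mathcal{C}_W}(\psi_! N, P)$, where $\psi_!$ denotes the coCartesian pushforward, that is, the product functor that $\psi$ encodes. The localization map $N \to LN$ is a tuple of $L$-equivalences $N_j \to LN_j$, one for each color underlying $V$. Changing these components one at a time and using that, by hypothesis, $\psi_!$ preserves $L$-equivalences in each separate variable, I conclude that $\psi_! N \to \psi_! LN$ is an $L$-equivalence in the fiber over $W$. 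Since $P$ is local, mapping into $P$ turns this into an equivalence $\Map_{\mathcal{C}_W}(\psi_! LN, P) \simeq \Map_{\mathcal{C}_W}(\psi_! N, P)$; in other words $N \to LN$ is already coCartesian when tested against local targets. Composing with the honest coCartesian edge $M \to N$ over $\phi$ then yields the chain of equivalences identifying maps out of $LN$ over $\psi$ with maps out of $M$ over $\psi\phi$, establishing that $e$ is coCartesian.

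With lifts in hand the rest is formal. The restricted projection is an inner fibration because $L\mathcal{C}^\otimes$ is a full subcategory of $\mathcal{C}^\otimes$ and $q$ was one, so the existence of coCartesian lifts upgrades it to a coCartesian fibration; its fibers are the $L\mathcal{C}_X$, so it is the sought-for $\mathcal{O}$-monoidal structure on local objects. Finally, to see that $L$ is $\mathcal{O}$-monoidal I would check that it carries coCartesian edges of $\mathcal{C}^\otimes$ to coCartesian edges of $L\mathcal{C}^\otimes$: a coCartesian edge over $\phi$ is sent to one equivalent to the localized lift $e$ constructed above, and a two-out-of-three argument using the previous paragraph identifies its image as coCartesian. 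The same coherence shows that the fiberwise reflections glue to a relative left adjoint $L$ of the inclusion over $\mathcal{O}^\otimes$, exhibiting $L\co \mathcal{C} \to L\mathcal{C}$ as the desired $\mathcal{O}$-monoidal localization.
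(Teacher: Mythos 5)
The paper does not prove this theorem---it is quoted directly from Lurie's \emph{Higher Algebra} (Proposition 2.2.1.9)---and your argument is a correct reconstruction of Lurie's own proof: pass to the unstraightened coCartesian fibration $\mathcal{C}^\otimes \to \mathcal{O}^\otimes$, take the full subcategory spanned by tuples of fiberwise-local objects, use the in-each-variable hypothesis (applied one coordinate at a time) to show the fiberwise localization maps become coCartesian when tested against local targets, and assemble the reflections into a relative left adjoint over $\mathcal{O}^\otimes$. Since your proposal is correct and follows essentially the same route as the cited source, there is nothing further to flag.
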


\begin{cor}
  Suppose that $\mathcal{C}$ is a (symmetric) monoidal
  $\infty$-category and that $L$ is a localization functor on
  $\mathcal{C}$ such that $L(X \otimes Y) \to L(LX \otimes LY)$ is
  always an equivalence. Then the subcategory $L\mathcal{C}$ of local
  objects has the structure of a (symmetric) monoidal
  $\infty$-category and any localization functor $L$ has the structure
  of a (symmetric) monoidal functor.
\end{cor}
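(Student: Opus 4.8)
The plan is to exhibit the (symmetric) monoidal $\infty$-category $\mathcal{C}$ as an $\mathcal{O}$-monoidal $\infty$-category for $\mathcal{O}$ the associative (respectively the commutative, i.e.\ $E_\infty$) $\infty$-operad, and then quote the preceding theorem of Lurie verbatim. These operads have a single color, so the categories $\mathcal{C}_X$ appearing in the theorem are all equal to $\mathcal{C}$, the localization functors $L_X$ are all the given $L$, and the functors induced by the active maps of $\mathcal{O}^\otimes$ are exactly the iterated tensor products $\mathcal{C}^{\times n} \to \mathcal{C}$. Granting the theorem's hypothesis, its conclusion produces an $\mathcal{O}$-monoidal structure on $L\mathcal{C}$ together with an $\mathcal{O}$-monoidal refinement of $L$, which is precisely a (symmetric) monoidal structure and a (symmetric) monoidal functor. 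Thus the entire content is to verify the single hypothesis of the theorem: that each iterated tensor functor preserves $L$-equivalences in each variable.

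By associativity of $\otimes$, an $L$-equivalence in the $i$-th slot of $Y_1 \otimes \cdots \otimes Y_n$, with the other slots held fixed, rewrites as the tensor of that $L$-equivalence with a fixed object on the left and a fixed object on the right; so the each-variable statement reduces to showing that, for any fixed $X$, both $\mathrm{id}_X \otimes (-)$ and $(-) \otimes \mathrm{id}_X$ carry $L$-equivalences to $L$-equivalences. I would prove this directly from the hypothesis. Let $f\co Y \to Y'$ be an $L$-equivalence, so $Lf$ is an equivalence. The comparison map $L(A \otimes B) \to L(LA \otimes LB)$ is natural in $A$ and $B$, being obtained by applying $L$ to $A \otimes B \to LA \otimes LB$, which is built from the naturally-defined localization units. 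I would therefore form the commuting square whose horizontal maps are the comparison equivalences for $(X,Y)$ and $(X,Y')$ and whose right-hand vertical map is $L(\mathrm{id}_{LX} \otimes Lf)$. Since $Lf$ is an equivalence, tensoring with $\mathrm{id}_{LX}$ and applying $L$ keeps it an equivalence; the two horizontal maps are equivalences by hypothesis; and 2-out-of-3 forces the left-hand vertical map $L(X \otimes Y) \to L(X \otimes Y')$ to be an equivalence. That is exactly the assertion that $\mathrm{id}_X \otimes f$ is an $L$-equivalence, and the identical argument on the other side handles $f \otimes \mathrm{id}_X$.

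I expect the main obstacle to be bookkeeping at the $\infty$-categorical level rather than anything conceptual: one must ensure that $L(A \otimes B) \to L(LA \otimes LB)$ is genuinely natural as a transformation of functors of two variables, not merely that each instance is an equivalence, since the 2-out-of-3 argument runs inside a square of such maps. This naturality is precisely what lets the collapsed hypothesis of the corollary recover the stronger-looking each-variable preservation demanded by the theorem; it is the reverse of the earlier proposition, where compatibility was shown to imply the equivalence $L(X_1 \otimes \cdots \otimes X_n) \to L(LX_1 \otimes \cdots \otimes LX_n)$. Once the each-variable hypothesis is established, all coherence data for the monoidal structure on $L\mathcal{C}$ and for the monoidal refinement of $L$ is supplied by Lurie's theorem, and no further verification is needed.
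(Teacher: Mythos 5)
Your proposal is correct and matches the paper's (implicit) proof: the corollary carries no separate argument in the text precisely because it is the specialization of Lurie's theorem to the associative or commutative $\infty$-operad, and the only genuine content is the step you supply, namely that the binary comparison equivalence $L(X \otimes Y) \to L(LX \otimes LY)$ forces the iterated tensor functors to preserve $L$-equivalences in each variable. Your square argument is sound---the horizontal maps are equivalences by hypothesis, the right vertical map $L(\mathrm{id}_{LX} \otimes Lf)$ is an equivalence because every functor preserves equivalences, and 2-out-of-3 concludes---and the naturality worry you flag is harmless, since equivalences are detected in the homotopy category, so homotopy-commutativity of each individual square, which follows from naturality of the unit $\mathrm{id} \to L$ tensored with itself, already suffices.
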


\begin{exam}
  In the category of spaces, we can use the mapping space adjunctions
  and find that for any $S$-local object $Z$, we have
  \begin{align*}
    \Map(X \times Y, Z) &\simeq \Map(X,\Map(Y,Z))\\
                        &\simeq \Map(X,\Map(LY,Z)\\
                        &\simeq \Map(X \times LY, Z)
  \end{align*}
  and similarly on the other side, showing that $LX \times LY$ is a
  localization of $X \times Y$. This gives the cartesian product on
  spaces the special property that it is compatible with \emph{all}
  localization functors.
\end{exam}

\begin{exam}
  Fix an $E_n$-operad $\mathcal{O}$ and an $\mathcal{O}$-algebra $B$
  in spaces representing an $n$-fold loop space. Consider the category
  $\mathcal{C}$ of functors $B \to \mathcal{S}$, viewed as local
  systems of spaces over $B$. Then the category $\mathcal{C}$ has a
  \emph{Day convolution}, developed by Glasman \cite{glasman-day} in
  the $E_\infty$-case and by Lurie \cite[{\S
    2.2.6}]{lurie-higheralgebra} in general, making $\mathcal{C}$ into
  an $\mathcal{O}$-monoidal category. The category $\mathcal{C}$ is
  equivalent (via unstraightening) to the category of spaces over $B$.
  In these terms the $\mathcal{O}$-monoidal structure is given by maps
  \begin{align*}
    \mathcal{O}(n) &\to \Map(B^n, B)\\
                   &\to \Fun((\mathcal{S}/B)^n,\mathcal{S}/B)
  \end{align*}
  that respect composition. Here $f \in \mathcal{O}(n)$ first goes to
  $f\co B^n \to B$, then to the functor sending $\{X_i \to B\}$ to the
  map $\prod X_i \to B^n \too{f} B$. An $\mathcal{O}$-algebra in
  $\mathcal{C}$ is equivalent to an $E_n$-space $X$ with a map $X \to
  B$ of $E_n$-spaces.

  Suppose $L$ is a Bousfield localization on spaces, and consider the
  associated pointwise localization on the functor category
  $\mathcal{C}$ (which corresponds to the fiberwise localization on
  spaces over $B$). All operations in $\mathcal{O}$ are, up to
  homotopy, composites of the binary multiplication operation, and so
  it suffices to show that this preserves localization. However, if
  the maps $X_i \to B$ have homotopy fibers $F_i$, then the homotopy
  fiber of the map $X_1 \times X_2 \to B \times B \to B$ is, up to
  equivalence, the geometric realization of the bar construction
  \[
    B(F_1, \Omega B, F_2).
  \]
  Since any localization preserves homotopy colimits and products of
  spaces, this bar construction preserves it also. Therefore,
  fiberwise localization is an $E_n$-monoidal functor on the category
  of spaces over $B$.\footnote{For \emph{grouplike} $E_n$-spaces over
    a grouplike $B$, this is roughly the statement that we can take
    $n$-fold classifying spaces, apply the fiberwise localization, and
    then take $n$-fold loop spaces.}
\end{exam}

\bibliography{../masterbib}
\end{document}